\newcommand{\erase}[1]{}
\newtheorem{theorem}{Theorem}[section]
\newtheorem{lemma}[theorem]{Lemma}
\newtheorem{proposition}[theorem]{Proposition}
\newtheorem{_algorithm}[theorem]{Algorithm}
\newtheorem{_definition}[theorem]{Definition}
\newenvironment{definition}{\begin{_definition}\rm}{\end{_definition}}
\newtheorem{_propositiondefinition}[theorem]{Proposition-Definition}
\newtheorem{_remark}[theorem]{\it Remark}
\newenvironment{remark}{\begin{_remark}\rm}{\end{_remark}}
\newtheorem{_example}[theorem]{Example}
\newenvironment{example}{\begin{_example}\rm}{\end{_example}}
\newtheorem{_assumption}[theorem]{Assumption}
\newtheorem{_construction}[theorem]{Construction}
\newtheorem{_claim}[theorem]{Claim}
\newtheorem{_conjecture}[theorem]{Conjecture}
\newtheorem{_problem}[theorem]{Problem}
\numberwithin{equation}{section}
\numberwithin{table}{section}
\numberwithin{figure}{section}
\renewcommand{\qed}{\hfill {$\Box$}}
\newcommand{\C}{\mathord{\mathbb C}}
\renewcommand{\P}{\mathord{\mathbb  P}}
\newcommand{\Q}{\mathord{\mathbb  Q}}
\newcommand{\R}{\mathord{\mathbb R}}
\newcommand{\Z}{\mathord{\mathbb Z}}
\newcommand{\CCC}{\mathord{\mathcal C}}
\newcommand{\EEE}{\mathord{\mathcal E}}
\newcommand{\FFF}{\mathord{\mathcal F}}
\newcommand{\HHH}{\mathord{\mathcal H}}
\newcommand{\III}{\mathord{\mathcal I}}
\newcommand{\JJJ}{\mathord{\mathcal J}}
\newcommand{\LLL}{\mathord{\mathcal L}}
\newcommand{\MMM}{\mathord{\mathcal M}}
\newcommand{\OOO}{\mathord{\mathcal O}}
\newcommand{\PPP}{\mathord{\mathcal P}}
\newcommand{\QQQ}{\mathord{\mathcal Q}}
\newcommand{\maprightsp}[1]{\; \smash{\mathop{\; \longrightarrow \; }\limits\sp{#1}}\; }
\newcommand{\mapdown}{\phantom{\Big\downarrow}\hskip -8pt \downarrow}
\newcommand{\mapdownleft}[1]{\llap{$\vcenter{\hbox{$\scriptstyle#1$\;}}$}\mapdown}
\newcommand{\mapdownsurj}{
\hbox{$\bigm\downarrow$}
\llap{\hbox{\raise 2pt\hbox{$\bigm\downarrow$}}}%
\vstrechmapdown
}
\newcommand{\mapupsurj}{
\hbox{$\bigm\uparrow$}
\llap{\hbox{\raise 2pt\hbox{$\bigm\uparrow$}}}%
\vstrechmapup
}
\newcommand{\inj}{\hookrightarrow}
\newcommand{\surj}{\mathbin{\to \hskip -7pt \to}}
\newcommand{\set}[2]{\{\,{#1}\mid {#2} \,\}}
\newcommand{\gen}[1]{\langle {#1}  \rangle}
\newcommand{\tensor}{\otimes}
\newcommand{\sprime}{\sp\prime}
\newcommand{\spar}[1]{\sp{(#1)}}
\newcommand{\spprime}{\sp{\prime\prime}}
\newcommand{\spcirc}{\sp{\mathord{\circ}}}
\newcommand{\sperp}{\sp{\perp}}
\newcommand{\dual}{\sp{\vee}}
\newcommand{\semidirectproduct}{\rtimes}
\newcommand{\inv}{\sp{-1}}
\newcommand{\bdr}{\partial\,}
\newcommand{\GL}{\mathord{\mathrm{GL}}}
\newcommand{\OG}{\mathord{\mathrm{O}}}
\newcommand{\id}{\mathord{\mathrm{id}}}
\newcommand{\Image}{\operatorname{\mathrm{Im}}\nolimits}
\newcommand{\Aut}{\operatorname{\mathrm{Aut}}\nolimits}
\newcommand{\Sing}{\operatorname{\mathrm{Sing}}\nolimits}
\newcommand{\Spec}{\operatorname{\mathrm{Spec}}\nolimits}
\newcommand{\rank}{\operatorname{\mathrm{rank}}\nolimits}
\newcommand{\closure}[1]{\overline{#1}}
\newcommand{\rmand}{\textrm{and}}
\newcommand{\quand}{\quad\rmand\quad}
\newcommand{\mystrutd}[1]{\phantom{\hbox{\vrule depth #1}}}
\newcommand{\mystruthd}[2]{\phantom{\hbox{\vrule  height #1 depth #2}}}
\newcommand{\intf}[1]{\langle #1 \rangle}
\newcommand{\Stab}{\operatorname{\rm Stab}}
\newcommand{\SX}{S_X}
\newcommand{\Roots}{\mathord{\mathrm{Roots}}}
\newcommand{\Rats}{\mathord{\mathrm{Rats}}}
\newcommand{\Sep}{\mathord{\mathrm{Sep}}}
\newcommand{\barPPP}{\overline{\PPP}}
\newcommand{\MW}{\mathrm{MW}}
\newcommand{\ADE}{\mathrm{ADE}}
\newcommand{\tilTheta}{\widetilde{\Theta}}
\newcommand{\specialization}{\mathrm{sp}}
\newcommand{\bargamma}{\overline{\gamma}}
\newcommand{\clN}{\overline{N}}
\newcommand{\barX}{\overline{X}}
\newcommand{\clX}{\overline{X}}
\newcommand{\barGamma}{\overline{\Gamma}}
\newcommand{\LS}{L_{26}/S_X}
\newcommand{\vect}[1]{\boldsymbol{#1}}
\newcommand{\theh}{\vect{h}}
\newcommand{\weyl}{\mathbf{w}}
\newcommand{\ve}{\vect{e}}
\newcommand{\ampleX}{\vect{a}}
\newcommand{\tilr}{\tilde{r}}
\newcommand{\orb}{\mathord{o}}
\newcommand{\vgamma}{\vect{\gamma}}
\newcommand{\tilample}{\tilde{\ampleX}}
\newcommand{\tilell}{\tilde{\ell}}
\newcommand{\Maa}{M_{\alpha}}
\newcommand{\Xfg}{X_{f, g}}
\newcommand{\TG}{T_G}
\newcommand{\orbb}{o\sprime}
\newcommand{\invol}{i}
\newcommand{\phaa}{\phantom{a}}
\newcommand{\zerosec}{\zeta}
\newcommand{\mwsec}{\sigma}
\newcommand{\addE}{{+_{E}}}
\newcommand{\haa}{h_{\alpha}}
\newcommand{\vaa}{v_{\alpha}}
\newcommand{\intfnull}{\intf{\phantom{a}\phantom{a}}}
\newcommand{\LSD}{D}
\newcommand{\LSDaa}{\LSD_1^{(\alpha)}}
\newcommand{\Card}{\mathrm{Card}}
\newcommand{\Concham}{\mathbf{C}}
\newcommand{\ConC}{\Concham}
\newcommand{\barsigma}{\bar{\sigma}}
\newcommand{\MWphi}{\MW_{\phi}}
\begin{document}

\title[Mordell-Weil groups of $K3$ surfaces]
{Mordell-Weil groups and automorphism groups of elliptic $K3$ surfaces}

\author[I. Shimada]{Ichiro Shimada}
\address{Department of Mathematics, Graduate School of Science, Hiroshima University,
1-3-1 Kagamiyama, Higashi-Hiroshima, 739-8526 JAPAN}
\email{ichiro-shimada@hiroshima-u.ac.jp}

\begin{abstract}
We present a method to calculate the action of the Mordell-Weil group of 
an elliptic  $K3$ surface 
on the numerical N\'eron-Severi lattice of the $K3$ surface.
As an application,
we compute a finite generating set of the automorphism group of a $K3$ surface
birational to the double plane branched along a $6$-cuspidal sextic curve of torus type.
\end{abstract}
\keywords{K3 surface,  double plane, automorphism group,  Mordell-Weil group, hyperbolic lattice}

\subjclass[2020]{14J28, 14Q10}
\thanks{This work was supported by JSPS KAKENHI Grant Number~20H01798.}
\maketitle
\section{Introduction}
We work over an algebraically closed field $k$.
\par
Let $X$ be a projective $K3$ surface.
We denote by   $\SX$ the \emph{numerical N\'eron-Severi lattice} of $X$,
that is, the group of numerical equivalence classes of divisors of $X$ with the intersection pairing
\[
\intfnull\colon \SX \times \SX \to \Z.
\]
Let $\OG(S_X)$ denote the group of isometries of the lattice $S_X$.
We investigate the automorphism group $\Aut(X)$ of $X$
by means of the action 
\[
\Aut(X) \to \OG(S_X)
\]
of $\Aut(X)$ on the lattice $\SX$.
\par
Let $\phi\colon X\to \P^1$ be an elliptic fibration
with a distinguished section $ \zerosec \colon \P^1\to X$.
In this case, we say that $(\phi,\zerosec)$ 
is a \emph{Jacobian fibration}.
We denote by $\MW(X, \phi,  \zerosec)$ the Mordell-Weil group 
of sections of $\phi$ with  $ \zerosec$ being  the zero element.
An element $\mwsec \in \MW(X, \phi, \zerosec)$ acts on 
the generic fiber of $\phi$ by translation.
Since $X$ is minimal,
this birational automorphism of $X$ is an automorphism of $X$,
and hence we have an embedding  of $\MW(X, \phi, \zerosec)$ into $\Aut(X)$.
In this paper,
we investigate the composite homomorphism 
\begin{equation}\label{eq:MWOG}
\MW(X, \phi, \zerosec) \;\;\to\;\; \Aut(X) \;\;\to\;\; \OG(\SX).
\end{equation}
This homomorphism has been used in many situations in the study 
of automorphisms of $K3$ surfaces (see, for example,~\cite{Schuett2016}).
The purpose of this paper is to present a general algorithm to calculate~\eqref{eq:MWOG} explicitly
and to give applications.
\par
Borcherds' method~(\cite{Borcherds1, Borcherds2}) is 
a method to calculate a finite generating set of the image of
$\Aut(X)\to \OG(\SX)$ by means of  
a certain decomposition of the nef-and-big cone of $X$ into a union of polyhedral cones.
The first application of this method to the study of the automorphism group of a $K3$ surface 
was given by Kondo~\cite{Kondo1998}.
See also~\cite{Shimada2015}.
Since this method is based on lattice-theoretic computation, 
the geometric meaning of elements in the generating set obtained by this method
is not clear in general.
The homomorphism~\eqref{eq:MWOG} 
helps us to 
express the generating set geometrically.
See Remark~\ref{rem:V0}.
\par
As an application,
we calculate the automorphism group of the complex $K3$ surface $\Xfg$
obtained as the minimal resolution of the double cover
$\barX_{f,g}$ of $\P^2$ defined by
\begin{equation}\label{eq:wfg}
w^2=f(x, y, z)^2+g(x,y,z)^3,
\end{equation}
where $f$ and $g$ are very general homogeneous polynomials on $\P^2$ of degree $3$ and $2$, respectively.
Here being \emph{very general} means that 
there exist at most countably many analytic subsets of $H^0(\P^2, \OOO(3))\times H^0(\P^2, \OOO(2))$ 
with codimension $\ge 1$ such that the pair $(f, g)$ does not belong to any of them.
We prove the following:
\begin{theorem}\label{thm:genssimple}
The automorphism group $\Aut(\Xfg)$ of $\Xfg$ is generated by 
$463$ 
involutions associated with double coverings $\Xfg \to \P^2$
and $360$ elements of infinite order
in Mordell-Weil groups of Jacobian fibrations of $\Xfg$. 
\end{theorem}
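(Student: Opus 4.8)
The plan is to compute the image of $\Aut(\Xfg)$ in $\OG(\SX)$ by Borcherds' method, and then to recognize the resulting lattice-theoretic generators geometrically by means of the double-plane structures and the homomorphism~\eqref{eq:MWOG}. First I would determine the lattice $\SX$ explicitly. For very general $(f,g)$ the six base points $\{f=g=0\}$ are ordinary cusps of the branch sextic $\{f^2+g^3=0\}$, so $\Xfg$ carries six disjoint $A_2$-configurations of smooth rational curves; together with the pullback of a line and the classes forced by the torus decomposition $w^2=f^2+g^3$, this pins down the rank of $\SX$, a Gram matrix, and the discriminant form. Very generality also guarantees that the transcendental lattice $T(\Xfg)$ admits no Hodge isometries other than $\pm 1$, which is the input needed for the realizability criterion below.

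Next I would run Borcherds' method. Fix a primitive embedding $\SX \hookrightarrow L_{26}$ into the even unimodular lattice of signature $(1,25)$, choose a Weyl vector and the associated initial induced chamber $D_0$ inside the nef cone $N\subset\overline{\mathcal P}$, and perform the chamber-by-chamber search. The induced chambers contained in $N$ tile it, so at each chamber I would enumerate its walls and separate them into (i) walls defined by classes of smooth rational curves, which are genuine walls of $N$, and (ii) inner walls across which the adjacent induced chamber lies again in $N$. For a wall of type (ii) I would cross to the neighbouring chamber $D'$ and test whether $D'$ is $G$-equivalent to a chamber already on the list, where $G:=\operatorname{Im}(\Aut(\Xfg)\to\OG(\SX))$; if so, the identifying isometry is recorded as a generator, otherwise $D'$ is appended. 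Together with the finite stabilizers of the chambers, this terminates with a finite fundamental domain for $G$ and a finite generating set of $G$.

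Then comes the realizability and lifting step. By the Torelli theorem an isometry $g\in\OG^{+}(\SX)$ preserving $N$ is induced by an automorphism of $\Xfg$ exactly when it extends to a Hodge isometry of $H^2$; since $T(\Xfg)$ has only $\pm 1$ as Hodge isometries, this reduces to a condition on the action of $g$ on the discriminant group of $\SX$, which I would verify for each candidate generator. The same genericity makes the kernel of $\Aut(\Xfg)\to\OG(\SX)$ trivial, so the generators of $G$ lift uniquely to generators of $\Aut(\Xfg)$.

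Finally I would attach geometric meaning to each generator. A nef class $h$ with $\langle h,h\rangle=2$ arising on a wall defines a double cover $\Xfg\to\P^2$ whose deck involution realizes the corresponding finite-order isometry; counting the relevant orbits gives the $463$ involutions. For the generators of infinite order I would apply the algorithm of this paper for~\eqref{eq:MWOG}: each such isometry preserves the class of a Jacobian fibration, and comparing its action on $\SX$ with the computed Mordell-Weil action identifies it with translation by an explicit section, producing the $360$ elements of infinite order. The main obstacle is precisely this last identification, since Borcherds' method returns only isometries of $\SX$; deciding which of them are covering involutions of degree-$2$ polarizations and which are Mordell-Weil translations — that is, matching the combinatorial output to the geometry — is exactly the task that the homomorphism~\eqref{eq:MWOG} is designed to resolve.
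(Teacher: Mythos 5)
Your overall architecture coincides with the paper's: determine $S_X$ and an ample class, embed $S_X$ primitively into $L_{26}$, run the chamber-by-chamber algorithm of~\cite{BrandhorstShimada2021} on the graph of $L_{26}/S_X$-chambers contained in $N_X$ (the paper finds $|V_0|=7$ representatives), use the period condition (action $\pm 1$ on $A(S_X)$) and faithfulness to characterize $\Aut(X)$ inside $\OG(S_X, \PPP_X)$, and finally attach geometric meaning to the generators. The genuine gap is in this last step, precisely where you locate ``the main obstacle'': you assume a dichotomy, namely that every chamber-identifying isometry returned by Borcherds' method is either a finite-order isometry realized by the deck involution of a degree-$2$ polarization, or an infinite-order isometry equal to a Mordell-Weil translation recognizable via the algorithm for~\eqref{eq:MWOG}. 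This dichotomy fails. Each representative chamber $\LSD_1\spar{\alpha}$ has an orbit $\orbb_6$ of $30$ walls whose crossing isometries $g_{\alpha\beta F}$ are not identified with any single covering involution or any single Mordell-Weil translation; the paper realizes them only as products $g_{\alpha\beta F}=g\sprime_{+I\beta}\cdot g\spprime_{+J}$ of a Mordell-Weil translation of type (e) and a covering involution of type (d) (equation~\eqref{eq:gabF}), and this factorization was found by an ad hoc search over short words in previously constructed geometric automorphisms (Remark~\ref{rem:trying}), not by any interpretation algorithm applied to the isometry itself.

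Concretely, your procedure would terminate with a generating set in which $180$ of the generators ($30$ for each $\alpha$) fit neither of your two classes, so the statement ``generated by $463$ involutions and $360$ Mordell-Weil elements'' would not follow from your plan as written. Note also that the $360$ involutions of type (d) counted inside the $463$ never occur as chamber-identifying isometries at all --- they enter the generating set only through the products above --- so ``counting the relevant orbits'' of the Borcherds output cannot produce the number $463$. What is missing is the paper's step of constructing, \emph{in advance}, a large supply of geometric automorphisms (types (a)--(e)) from nef classes of norm $2$ and from Mordell-Weil groups computed by the algorithm of Section~\ref{sec:MW}, and then verifying that suitable elements and products of them realize every required chamber identification. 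A secondary inaccuracy: the degree-$2$ polarizations do not ``arise on walls'' --- the primitive defining vectors of the relevant walls have norms $-2/3$, $-3/2$, $-1/6$ --- rather, the classes $h_{IJ}$, $\haa^{\pm}$, $h_{\sigma J}$ are constructed separately and then shown, via $\Sep$ computations, to induce the required maps between adjacent chambers; this is a fixable presentation issue, unlike the missing factorization step.
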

Here, by a \emph{double covering},
we mean a generically finite morphism of degree $2$.
\begin{theorem}\label{thm:rats}
The automorphism group $\Aut(\Xfg)$ 
acts on the set of smooth rational curves on $\Xfg$ transitively.
\end{theorem}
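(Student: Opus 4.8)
The plan is to reformulate the statement geometrically and then reduce it to a finite orbit computation inside the nef-and-big cone. Recall that on a $K3$ surface a class $C \in S_X$ is the class of a smooth rational curve if and only if $C^2 = -2$, $C$ is effective, and $C$ is not a nontrivial sum of effective classes; such classes are exactly the primitive $(-2)$-vectors that span the facets of the nef cone $N(X)$ lying in the interior of the positive cone. Thus the set of smooth rational curves on $\Xfg$ is in natural bijection with this set of facets of $N(X)$. Since the image of $\Aut(\Xfg) \to \OG(S_X)$ preserves both the positive cone and $N(X)$, it permutes these facets, and the assertion of Theorem~\ref{thm:rats} is equivalent to saying that $\Aut(\Xfg)$ acts transitively on the facets of $N(X)$.

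First I would make explicit the data produced by Borcherds' method in the course of proving Theorem~\ref{thm:genssimple}. That method tessellates $N(X)$ by polyhedral chambers and yields a fundamental domain $D$ for the action of $\Aut(\Xfg)$ on $N(X)$ consisting of finitely many chambers, together with the finite generating set $G$ of $\Aut(\Xfg)$ coming from wall-crossing, which by Theorem~\ref{thm:genssimple} may be taken to consist of the $463$ double-covering involutions and the $360$ Mordell-Weil translations. The boundary of $D$ decomposes into \emph{inner walls}, which are glued in pairs by elements of $G$, and \emph{outer walls}, which lie on $\partial N(X)$ and are precisely facets of $N(X)$ defined by smooth rational curves. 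Because $D$ is a fundamental domain, every facet of $N(X)$ is $\Aut(\Xfg)$-equivalent to one of the outer walls of $D$; let $C_1,\dots,C_m$ be the finitely many smooth rational curves defining them.

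It then remains to show that $C_1,\dots,C_m$ all lie in a single $\Aut(\Xfg)$-orbit. Here I would use that the action of every generator in $G$ on $S_X$ is completely explicit: the involutions act by isometries of $S_X$ computed from the double-covering geometry, and the Mordell-Weil elements act via the algorithm for \eqref{eq:MWOG}. I would then compute the $G$-orbit of $C_1$ among $(-2)$-vectors and verify that each $C_j$ occurs in it; equivalently, for every $j$ I would exhibit a word in $G$ carrying $C_1$ to $C_j$. Finding a single orbit proves the theorem.

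The main obstacle is this last step. A priori the outer walls of $D$ could split into several $\Aut(\Xfg)$-orbits, in which case the action would \emph{not} be transitive, so one cannot argue abstractly: the connecting automorphisms must actually be found. The explicit control of the $\Aut(\Xfg)$-action on $S_X$ supplied by the algorithm for \eqref{eq:MWOG} is exactly what turns this potentially intractable lattice-theoretic existence problem into a terminating search, and it is plausible that the rich supply of double-covering involutions is what ultimately fuses the geometrically different rational curves, such as exceptional curves of the resolution, components lying over the cusps, and strict transforms of lines, into one orbit.
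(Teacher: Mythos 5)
Your reduction is exactly the one the paper uses: every $r\in\Rats(X)$ defines a wall of $N_X$, hence a wall of some $\LS$-chamber contained in $N_X$; since $V_0$ (the seven chambers of Proposition~\ref{prop:seven}) is a complete set of representatives for the action of $G=\Aut(X)$ on such chambers, $r$ is $G$-equivalent to one of the finitely many $(-2)$-classes defining the ``outer'' walls of the chambers in $V_0$, namely the $12+2+15$ curves $\ve_{\alpha}\spar{\pm}$, $\vgamma\spar{\pm}$, $\tilell_{ij}$. Up to this point your argument is sound and coincides with the paper's.

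The genuine gap is that you stop precisely where the substantive part of the proof begins. You reduce the theorem to ``exhibit a word in $G$ carrying $C_1$ to each $C_j$,'' call this a terminating search, and offer only the plausibility that the involutions ``fuse'' the representative curves into one orbit; as you yourself concede, the orbits could a priori split, so nothing is yet proved. The paper closes this gap not by a blind orbit enumeration but by reading the fusion off from the geometric description of the generators established earlier: (i) $\invol(\theh)$ interchanges $\ve_{\alpha}\spar{+}\leftrightarrow\ve_{\alpha}\spar{-}$ and $\vgamma\spar{+}\leftrightarrow\vgamma\spar{-}$; (ii) the type (b) involution $\invol(h_{IJ})$ interchanges $\ve_{j_1}\spar{+}$ and $\ve_{j_2}\spar{+}$ (Remark~\ref{rem:PhiIJ}), linking exceptional curves over different cusps; (iii) the type (c) involution $\invol(\haa^+)$ interchanges $\vgamma\spar{-}$ and $\ve_{\alpha}\spar{-}$ (Remark~\ref{rem:typecexceptional}), linking the conic components to the exceptional curves; (iv) both $\ve_{j}\spar{\sigma}$ and $\tilell_{j_1 j_2}$ occur as sections of the Jacobian fibration with fiber class $f_{\sigma I}$ (Section~\ref{subsec:orbb6}), and any two sections of a Jacobian fibration are exchanged by a Mordell-Weil translation, which puts the $\tilell_{ij}$ into the same orbit. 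Items (ii)--(iv) are exactly the geometric input your proposal leaves unsupplied, and without them the transitivity claim does not follow from the Borcherds data alone.
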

The branch curve of the finite double cover $\barX_{f,g}\to \P^2$ is 
defined by the equation $f^2+g^3=0$.
This plane curve  is called a \emph{$6$-cuspidal plane sextic of torus type}, and 
was studied intensively from various points of view.
See, for example, \cite{OkaPho2002}.
In fact,  Zariski~\cite{Zariski1931} observed that there exists a $6$-cuspidal plane sextic of \emph{non}-torus type,
and  the seminal notion of \emph{Zariski pairs} emerged
from this observation.
See~\cite{Artal1994} and~\cite{ACT2008}.
In~\cite{Degtyarev2008} and~\cite{Shimada2010}, this classical example of Zariski pairs was studied 
in relation to the theory of $K3$ surfaces.
It would be an interesting problem to calculate 
the automorphism group of the $K3$ surface
obtained from the $6$-cuspidal plane sextic of non-torus type.
\par
The generating set in Theorem~\ref{thm:genssimple} is constructed in such a way that 
we can clearly see the geometric meaning of each element.
See Section~\ref{sec:AutXfg} for more precise descriptions of these automorphisms.
Remark that this generating set 
 is not minimal at all.
 \par
In fact, we give divisors of $\Xfg$ whose classes generate $S_X$.
Hence we can calculate, in principle,  the equations of the double coverings and the Jacobian fibrations
by the method given in~\cite{Shimada2014}.
The actual computation of the equations, however, would be very hard.
\par
Theorem~\ref{thm:genssimple} is proved in the following three steps.
\begin{enumerate}[(a)]
\item \label{step:find}  We find many automorphisms of $\Xfg$
 geometrically by the methods explained in Section~\ref{sec:SX}
 (especially Section~\ref{subsec:findingaut})~and Section~\ref{sec:MW}.
\item \label{step:Borcherds} We find a finite generating set of $\Aut(\Xfg)$ by 
Borcherds' method,
which will be explained in Section~\ref{sec:Borcherds}.
 \item 
We then show that the group generated by the automorphisms obtained in Step~(\ref{step:find})
 contains the generating set obtained in Step~(\ref{step:Borcherds}).
\end{enumerate}
\par
See~\cite{LieblichMaulik2018},~\cite{Looijenga2014}~and~\cite{Sterk1985} 
for general finiteness results of the automorphism group of a $K3$ surface and its action on the nef-and-big cone.
\par
This paper is organized as follows.
After fixing some notions and notation about lattices in Section~\ref{sec:notation},
we summarize in Section~\ref{sec:SX} various computational tools that are useful in the study of the geometry of $K3$ surfaces.
These tools are based on an algorithm 
 given in~\cite{Shimada2014}
 to calculate 
$\Sep(v_1,v_2)$ of \emph{separating $(-2)$-vectors} in a hyperbolic lattice.
In Section~\ref{sec:MW}, we present an algorithm to calculate the homomorphism~\eqref{eq:MWOG}.
In Section~\ref{sec:Borcherds}, we review  Borcherds' method.
We employ a graph-theoretic formulation of Borcherds' method given in~\cite[Section 4.1]{BrandhorstShimada2021}.
Sections~\ref{sec:SX}--\ref{sec:Borcherds} are intended to be summaries of computational methods in the study of $K3$ surfaces
for future reference.
In Section~\ref{sec:AutXfg}, we  calculate $\Aut(\Xfg)$ by means of all these algorithms,
and prove Theorems~\ref{thm:genssimple}~\and~\ref{thm:rats}.
We used {\tt GAP}~\cite{GAP} for the actual computation.
In the author's webpage~\cite{compdata},
we put detailed computation data about $\Aut(\Xfg)$. 
 \par
\medskip
{\bf Acknowledgement.}
This paper started as an answer to an old question by Alex Degtyarev.
A part of this work was done during the author's stay at National University of Singapore in September 2022.
Thanks are due to Professor De-Qi Zhang for his hospitality.
The author also likes to thank the anonymous referee for many valuable comments
and suggestions.
\section{Notation and terminologies}\label{sec:notation}
By a \emph{lattice}, we mean a free $\Z$-module $L$ of finite rank with a non-degenerate symmetric bilinear form 
\[
\intfnull\colon L\times L\to \Z,
\]
which we call the \emph{intersection form} (or the \emph{intersection pairing}) of $L$.
The group of isometries of a lattice $L$ is denoted by $\OG(L)$,
which we let act on $L$ from the \emph{right}.
\par
Let $L$ be a lattice.
Then the \emph{dual lattice} $L\dual$ of $L$ is defined to be
\[
\set{x\in L\tensor\Q}{\intf{x, v}\in \Z\;\;\textrm{for all}\;\; v\in L}.
\]
The finite abelian group $A(L):=L\dual/L$ is called the \emph{discriminant group} of $L$.
We say that $L$ is \emph{unimodular} if $L=L\dual$.
\par
A lattice $L$ is said to be \emph{even} if $\intf{v, v}\in 2\Z$ holds for all $v\in L$.
A \emph{root} of an even lattice $L$ is a vector $r\in L$ such that $\intf{r, r}$ is either $2$ or $-2$.
A \emph{$(-2)$-vector} of $L$ is a root $r\in L$ 
such that $\intf{r, r}=-2$.
Suppose that  $L$ is even and negative-definite.
Then the set 
\[
\Roots(L):=\set{r\in L}{\intf{r, r}=-2}
\]
is finite.
An even negative-definite lattice $L$ is called a \emph{root lattice} 
if $L$ is generated by $\Roots(L)$.
A root lattice has a basis consisting of roots
whose dual graph is a Dynkin diagram of type $\ADE$. 
See, for example,~\cite[Section 1]{Ebeling2013}
for the definition of dual graphs,  Dynkin diagrams, and their $\ADE$-types.
\par
A lattice $L$ of rank $n>1$  is said to be \emph{hyperbolic} if the signature of 
the real quadratic space $L\tensor \R$ is $(1, n-1)$.
Let $L$ be an even hyperbolic lattice.
A \emph{positive cone} of $L$ 
 is one of the two connected components of the space 
 \[
\set{x\in L\tensor\R}{\intf{x, x}>0}.
\]
Let $\PPP$ be a positive cone of $L$.
We put 
\[
\OG(L, \PPP):=\set{g\in \OG(L)}{\PPP^g=\PPP}.
\]
We have $\OG(L)=\OG(L, \PPP)\times \{\pm 1\}$.
For  
$v\in L \tensor \R$,
we put
\[
v\sperp:=\set{x\in L\tensor \R}{\intf{x, v}=0}.
\]
When $v\in \PPP\cap L$, 
the intersection $v\sperp\cap L$ is an even negative-definite sublattice of $L$, 
and hence we can effectively calculate  the finite set 
\[
\Roots(v\sperp\cap L)=\set{r\in L}{\intf{r, v}=0, \;\; \intf{r, r}=-2}
\]
of $(-2)$-vectors in $L$ perpendicular to $v$.  
\par
For $v\in L\tensor\R$ with $\intf{v, v}<0$,
we put
\[
(v)\sperp:=v\sperp\cap \PPP=\set{x\in \PPP}{\intf{x, v}=0}, 
\]
which is a real hyperplane of $\PPP$.
Let $v_1, v_2\in L\tensor\Q$ be rational vectors in $\PPP$.
Then we can calculate the finite set 
\[
\Sep(v_1, v_2):=\set{r\in L}{\;\intf{r, v_1}>0, \; \intf{r, v_2}<0, \; \intf{r, r}=-2\;}
\]
of  $(-2)$-vectors \emph{separating $v_1$ and $v_2$}. 
See~\cite{Shimada2014} for the algorithm.
As will be explained in Section~\ref{sec:SX},
this algorithm is very useful in the study of $K3$ surfaces.
\begin{definition}
By a \emph{chamber},
we mean a closed subset $D$ of $\PPP$ such that
\begin{itemize}
\item $D$ contains  a non-empty open subset of $\PPP$, and 
\item  $D$ is defined by linear inequalities $\intf{x, v_i}\ge 0$ ($i\in I$),
where $v_i$  ($i\in I$) are vectors of $L\tensor \R$ with  $\intf{v_i, v_i}<0$ 
such that the family $\set{(v_i)\sperp}{i\in I}$ of hyperplanes is locally finite in $\PPP$.
\end{itemize} 
\end{definition}
\begin{definition}
Let $D$ be a chamber.
A \emph{wall} of $D$ is a closed subset of $D$ of the form $D\cap (v)\sperp$
such that the hyperplane $(v)\sperp$ is disjoint from the interior of $D$ and that 
$D\cap (v)\sperp$ contains a non-empty open subset of $(v)\sperp$.
We say that a vector $v\in L\tensor \R$ \emph{defines} a wall $w$ of $D$ if 
$w=D\cap (v)\sperp$ and  $\intf{x, v}>0$ for an interior point $x$ of $D$
(and hence $\intf{x, v}\ge 0$ for all $x\in D$).
A defining vector of a wall of a chamber is unique up to positive multiplicative constant.
\end{definition}
\begin{definition}
Let $\FFF:=\set{(v_{\alpha})\sperp}{\alpha\in F}$ be a locally finite family of hyperplanes in $\PPP$.
Then the closure in $\PPP$ of each connected component of
\[
\PPP\;\setminus\; \bigcup_{\alpha\in F} \;(v_{\alpha})\sperp
\]
is a chamber.
Let $\CCC_{\FFF}$ be the set of these chambers.
In this situation, we say that $\PPP$ is \emph{tessellated by the chambers in $\CCC_{\FFF}$}.
If a subset $N$ of $\PPP$ is the union of chambers in a subset  of $\CCC_{\FFF}$,
we say that $N$ is \emph{tessellated by chambers in $\CCC_{\FFF}$}.
\par
Let $w$ be a wall of a chamber $D\in \CCC_{\FFF}$.
Then there exists a unique chamber $D\sprime\in \CCC_{\FFF}$ 
such that $D\ne D\sprime$ and $w\subset D\sprime$.
This chamber $D\sprime$ is called the chamber \emph{adjacent to $D$ across the wall $w$}.
\end{definition}
A $(-2)$-vector $r\in L$ defines a reflection 
\[
s_r\colon x\mapsto x+\intf{x, r} r
\]
into the mirror $(r)\sperp$.
We have $s_r\in \OG(L, \PPP)$.
Let $W(L)$ denote the subgroup of $\OG(L, \PPP)$ generated by all the reflections $s_r$ with respect 
to $(-2)$-vectors $r$.
We call $W(L)$ the \emph{Weyl group} of $L$.
Note that the family of hyperplanes $(r)\sperp$ defined by $(-2)$-vectors $r$
%
%
is locally finite in $\PPP$.
\begin{definition}
A \emph{standard fundamental domain of  $W(L)$} is 
the closure of a connected component of 
\[
\PPP\;\setminus\; \bigcup \;(r)\sperp,
\]
where 
$r$ runs through the set of $(-2)$-vectors.
\end{definition}
%
Let  $D$ be a standard fundamental domain of  $W(L)$.
We put 
\[
\OG(L, D):=\set{g\in \OG(L)}{D^g=D}.
\]
Then we have $\OG(L, \PPP)=W(L)\semidirectproduct \OG(L, D)$.
The action of $\OG(L, \PPP)$ on $\PPP$
preserves the tessellation of $\PPP$ by 
the standard fundamental domains of $W(L)$.
\section{The numerical N\'eron-Severi lattice of a $K3$ surface}\label{sec:SX}
Let $X$ be a $K3$ surface, and
$S_X$ the lattice of numerical equivalence classes of 
divisors of $X$,
which we call the \emph{numerical N\'eron-Severi lattice} of $X$.
For a divisor $D$ of $X$,  we denote by $[D]\in S_X$ the class of $D$.
Suppose that $S_X$ is of rank $n >1$.
Then  $S_X$ is an even hyperbolic lattice.
Let $\PPP_X$ be the positive cone
of $S_X$ containing an ample class 
of $X$,
and $\barPPP_X$ the closure of $\PPP_X$ in $S_X\tensor\R$.
We put 
\begin{eqnarray*}
N_X &:=&\set{x\in \PPP_X}{\intf{x, [C]}\ge 0\;\;\textrm{for all curves $C$ on $X$}}, \\
N_X\sp{\circ}&:=&\textrm{the interior of $N_X$}, \\
\clN_X &:=&\textrm{the closure of $N_X$ in $\barPPP_X$}.
\end{eqnarray*}
The cone  $N_X$ is called the \emph{nef-and-big cone} of $X$.
If $C$ is a smooth rational curve on $X$,
then its class $[C]$ is a $(-2)$-vector of $S_X$.
We put
\[
\Rats(X):=\set{[C]\in S_X}{\textrm{$C$ is a smooth rational curve on $X$}}.
\]
We have the following:
\begin{theorem}\label{thm:NX}
The nef-and-big cone $N_X$ is a standard fundamental domain of 
the Weyl group $W(S_X)$ of $S_X$.
A $(-2)$-vector $r\in S_X$
belongs to  $\Rats(X)$ if and only if $r$ defines a wall of the chamber $N_X$.
\qed
\end{theorem}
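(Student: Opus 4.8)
The plan is to deduce both assertions from three classical inputs: Riemann--Roch on the $K3$ surface $X$, the adjunction formula, and the reverse Cauchy--Schwarz (``light cone'') inequality, which says that two vectors lying in the closure $\barPPP_X$ of a single positive cone pair nonnegatively. Recall that Riemann--Roch on a $K3$ surface gives $h^0(D)+h^0(-D)\ge 2+\tfrac{1}{2}\intf{[D],[D]}$ for every divisor $D$, so that for each $(-2)$-vector $r\in\SX$ either $r$ or $-r$ is the class of a nonzero effective divisor. I would fix once and for all an ample class $a\in N_X\sp{\circ}$. Since $a$ pairs strictly positively with every nonzero effective class, exactly one of $\pm r$ is effective, namely the one with $\intf{a,r}>0$; in particular $\intf{a,r}\ne 0$ for every $(-2)$-vector, so $a$ lies in the interior of a unique standard fundamental domain $D$ of $W(\SX)$.

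To prove the first assertion I would identify $N_X$ with this $D$ by comparing two descriptions. First, $N_X$ is cut out by $\Rats(X)$ alone:
\[
N_X=\set{x\in\PPP_X}{\intf{x,r}\ge 0 \ \text{for all}\ r\in\Rats(X)}.
\]
The inclusion $\subseteq$ is immediate, and for $\supseteq$ one notes that an irreducible curve $C$ which is not smooth rational has $\intf{[C],[C]}\ge 0$ by adjunction, hence $[C]\in\barPPP_X$, so the light cone inequality already forces $\intf{x,[C]}\ge 0$ for all $x\in\PPP_X$. Second, viewing the effective $(-2)$-classes as the positive roots of $W(\SX)$, the chamber is $D=\set{x\in\PPP_X}{\intf{x,r}\ge 0 \ \text{for all effective}\ (-2)\text{-vectors}\ r}$. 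Now $D\subseteq N_X$ because $\Rats(X)$ consists of effective $(-2)$-classes, while $N_X\subseteq D$ because any effective $(-2)$-class is represented by some $E=\sum n_iC_i$ with $n_i>0$, whence $\intf{x,[E]}=\sum n_i\intf{x,[C_i]}\ge 0$ for nef $x$. Thus $N_X=D$ is a standard fundamental domain.

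For the second assertion I would invoke the correspondence, valid for the hyperbolic reflection group $W(\SX)$ with its locally finite mirror arrangement, between the walls of the fundamental chamber $N_X$ and the \emph{indecomposable} positive roots, i.e. the effective $(-2)$-classes that are not a sum of two nonzero effective classes. It then remains to match these with $\Rats(X)$. If $C$ is a smooth rational curve, Riemann--Roch gives $\dim|C|=0$, so $C$ is the unique effective divisor in its class and $[C]$ is indecomposable. Conversely, an indecomposable effective $(-2)$-class must be represented by a reduced irreducible curve $E$ (otherwise its class splits off a proper effective summand), and $\intf{[E],[E]}=-2$ forces $E\cong\P^1$ by adjunction, so its class lies in $\Rats(X)$. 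Since a $(-2)$-vector is primitive and a wall-defining vector is unique up to positive scalar, this yields: $r$ defines a wall of $N_X$ iff $r$ is indecomposable effective iff $r\in\Rats(X)$.

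The main difficulty will be the passage from the convex-geometric description of $N_X$ (cut out by $\Rats(X)$) to the reflection-group description (cut out by all effective $(-2)$-classes), and in particular the statement that the walls of the fundamental chamber are precisely the indecomposable positive roots. This is where Vinberg's theory of hyperbolic reflection groups is needed; the geometric content supplied by $X$ — Riemann--Roch together with adjunction — is exactly what furnishes the ``positive system'' of roots, namely the effective $(-2)$-classes, that makes that theory applicable.
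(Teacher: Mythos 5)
Your treatment of the \emph{first} assertion is correct and complete: Riemann--Roch gives the effective/anti-effective dichotomy for $(-2)$-vectors, adjunction plus the light-cone inequality shows that $N_X$ is cut out by $\Rats(X)$ alone, and your two inclusions identify $N_X$ with the chamber of the effective $(-2)$-classes containing the ample class. (Note that the paper states Theorem~\ref{thm:NX} with no proof, as a classical fact; the closest machinery it actually proves is Proposition~\ref{prop:Rats}, whose content is precisely what your second half is missing.)

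The \emph{second} assertion has a genuine gap. The correspondence you invoke --- walls of the fundamental chamber $\leftrightarrow$ ``indecomposable positive roots'' --- is not a citable theorem of hyperbolic reflection-group theory, because your notion of indecomposability quantifies over \emph{effective classes}, which the group $W(S_X)$ does not see; and the purely root-theoretic surrogate (``a positive root defines a wall if and only if it is not a sum of two positive roots'') is \emph{false} in this setting. For example, take an elliptic $K3$ surface whose Picard lattice has rank $3$, with section class $s$, fiber class $f$, and a class $c$ of an $I_2$-fiber component disjoint from the section, so $\intf{f,f}=0$, $\intf{f,s}=1$, $\intf{s,s}=\intf{c,c}=-2$, $\intf{f,c}=\intf{s,c}=0$. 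The effective $(-2)$-class $r=c+f$ satisfies $(r)\sperp\cap N_X=\emptyset$ (any nef $x$ with $\intf{x,r}=0$ would have $\intf{x,f}=0$, impossible for $x\in\PPP_X$ and $f\in\barPPP_X\setminus\{0\}$), so $r$ defines no wall; yet a direct computation in this lattice shows that every way of writing $r$ as a sum of two roots uses a root pairing negatively with $f$ or with the ample class, i.e.\ a negative root. So $r$ is root-theoretically indecomposable without being wall-defining; your hybrid statement is true only by virtue of the geometry, and proving it is exactly the content you deferred. Concretely, two arguments are missing. (i) To show every $r\in\Rats(X)$ \emph{does} define a wall you need the geometric fact, nowhere mentioned in your proposal, that distinct irreducible curves meet nonnegatively, hence $\intf{r,r\sprime}\ge 0$ for distinct $r,r\sprime\in\Rats(X)$; feeding this into the projection $a\sprime_r=a+\frac{\intf{a,r}}{2}r$ of the ample class onto $(r)\sperp$ (Vinberg's wall criterion, i.e.\ the proof of Proposition~\ref{prop:Rats}) exhibits an open piece of $(r)\sperp$ inside $N_X$. (ii) To show a wall-defining $(-2)$-vector lies in $\Rats(X)$, one works at a relative-interior point $x_0$ of the wall: writing $r=\sum n_i[C_i]$ with $C_i$ irreducible, nefness and the strict light-cone inequality force each $C_i$ to be smooth rational with $([C_i])\sperp$ containing an open subset of $(r)\sperp$, whence $[C_i]=r$ and $\sum n_i=1$. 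Neither step is abstract ``Vinberg theory''; both use the surface, and (i) in particular rests on an input your proof never introduces.
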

Suppose that we have an ample class $\ampleX\in N_X\spcirc \cap S_X$.
Then Vinberg's algorithm~\cite{VinbergBombay}
enables us to  enumerate, for a given positive integer $m$, 
all the walls $N_X\cap (r)\sperp$ of $N_X$ defined by $r\in \Rats(X)$ with $\intf{r, \ampleX}\le m$.
(See~\eqref{eq:lassicalVinberg} below.)
Our algorithm~\cite{Shimada2014} of 
calculating the set $\Sep(v_1, v_2)$ of separating $(-2)$-vectors
provides us with an alternative method to investigate the nef-and-big cone $N_X$.
Below are some examples.
\subsection{Finding an ample class}\label{subsec:findanample}
It is well-known that 
a class $v\in S_X$ is ample if and only if $v\in N_X\sp{\circ}$.
Let $\clX$ be a normal surface birational to $X$,
and $h\in S_X$ the pull-back of an ample class of $\clX$
by the minimal resolution $X\to \clX$.
Then we have $h\in N_X$.
It is known~\cite{Artin1966} that $\clX$ has only rational double points 
as its singularities, 
and hence the exceptional locus of the desingularization $X\to \clX$
is a union of smooth rational curves whose dual graph is 
a Dynkin diagram of type $\ADE$.
Let $r_1, \dots, r_{\mu}$ be the classes of smooth rational curves contracted  by 
$X\to \clX$.
Then, \emph{locally around $h$}, the chamber $N_X$ is defined by $\intf{x, r_i}\ge 0$ for $i=1, \dots, \mu$.
Therefore a vector $v\in \PPP_X\cap S_X$ is ample 
if and only if 
\[
\Sep(h, v)=\emptyset , \quad \Roots(\,v\sperp\cap S_X\,)=\emptyset, 
\quad\textrm{and}\quad  \intf{ v, r_i}> 0\;\;\textrm{for}\;\; i=1, \dots, \mu.
\]
If $a\sprime\in S_X$ satisfies $ \intf{a\sprime, r_i}> 0$ for $i=1, \dots, \mu$,
then $ \ampleX:=m h +a\sprime$ is ample for sufficiently large  integers $m$.
\subsection{Nefness and ampleness}
Suppose that we have an ample class $\ampleX\in S_X$.
Then 
we can characterize $N_X$ as the unique standard fundamental domain of $W(S_X)$
containing $\ampleX$.
Let $v\in S_X$ be a vector with $\intf{v,v}>0$.
Then we have 
\[
v \in \PPP_X \;\; \Longleftrightarrow\;\; \intf{\ampleX, v}>0.
\]
When  these are the case, we have  
\[
v \in N_X  \;\; \Longleftrightarrow\;\;  \Sep(\ampleX, v)=\emptyset.
\]
When  these are the case, we have  
\[
v \in N_X\sp{\circ}   \;\; \Longleftrightarrow\;\;\Roots(\,v\sperp\cap S_X\,)=\emptyset.
\]
\subsection{The group $\OG(S_X, N_X )$}\label{subsec:OSXNX}
Recall that $\OG(S_X, N_X )$ is the subgroup of $\OG(S_X, \PPP_X)$ 
consisting of all isometries $g$ such that $N_X^g=N_X$.
Suppose again that we have an ample class $\ampleX\in S_X$.
Let $g$ be an element of $\OG(S_X)$.
Then we have 
\[
g \in \OG(S_X, \PPP_X )\;\; \Longleftrightarrow\;\; \intf{\ampleX,  \ampleX^g}>0.
\]
When  these are the case, we have  
\begin{equation}\label{eq:Sepaag}
g \in \OG(S_X, N_X ) \;\; \Longleftrightarrow\;\;  \Sep( \ampleX, \ampleX^g)=\emptyset,
\end{equation}
because, for $g\in \OG(S_X, \PPP_X)$,
the chamber $N_X^g$ is also a standard fundamental domain of $W(S_X)$.
\subsection{The set  $\Rats(X)$}\label{subsec:Rats}
Again we assume that we have an ample class $\ampleX\in S_X$.
Let $r\in S_X$ be a  $(-2)$-vector  such that $\intf{\ampleX, r}>0$.
Then there exists an effective divisor $D$ of $X$ such that $r=[D]$.
We have $r\in \Rats(X)$ if and only if $D$ is irreducible.
 \par
 Since $D$ contains a smooth rational curve $C$ such that $\intf{[C], [D]}<0$
 as an irreducible component, we have the following criterion,
 which is a geometric interpretation of Vinberg's algorithm~\cite{VinbergBombay} 
 applied to $(-2)$-vectors:
\begin{equation}\label{eq:lassicalVinberg}
 r\in \Rats(X)
 \;\; \Longleftrightarrow\;\;
 \textrm{$\intf{r, r\sprime}\ge 0$ for all $r\sprime \in \Rats(X)$ with $\intf{r\sprime, \ampleX}<\intf{r, \ampleX}$}
\end{equation}
Thanks to the algorithm to calculate $\Sep(v_1, v_2)$,
we obtain another criterion.
 \begin{proposition}\label{prop:Rats}
 Let $r\in S_X$ be a $(-2)$-vector with $\intf{\ampleX, r}>0$.
 We put
 \[
 a_r\sprime:=\ampleX+\frac{\intf{\ampleX, r}}{2} r.
 \]
 Then $r\in \Rats(X)$ if and only if 
\begin{equation}\label{eq:ar}
 \Roots(\, a_r\sp{\prime\perp} \cap S_X\,)=\{r, -r\}
 \quand 
 \Sep( a_r\sprime, \ampleX)=\emptyset.
\end{equation}
 \end{proposition}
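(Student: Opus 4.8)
The plan is to reduce each of the two conditions in~\eqref{eq:ar} to a geometric statement about the position of $a_r\sprime$ relative to the chamber $N_X$, and then appeal to Theorem~\ref{thm:NX}. First I would record three elementary facts about $a_r\sprime$. Since $\intf{a_r\sprime, r}=\intf{\ampleX,r}+\tfrac{\intf{\ampleX,r}}{2}\intf{r,r}=0$, the vector $a_r\sprime$ is the foot of the perpendicular from $\ampleX$ to the hyperplane $(r)\sperp$; equivalently $a_r\sprime=\tfrac12(\ampleX+\ampleX^{s_r})$ is the midpoint of $\ampleX$ and its mirror image under the reflection $s_r$. A direct computation gives $\intf{a_r\sprime,a_r\sprime}=\intf{\ampleX,\ampleX}+\tfrac12\intf{\ampleX,r}^2>0$ together with $\intf{\ampleX,a_r\sprime}>0$, so $a_r\sprime\in\PPP_X$. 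Finally, since $\Sep(a_r\sprime,\ampleX)=-\Sep(\ampleX,a_r\sprime)$, the second condition in~\eqref{eq:ar} is equivalent, by the nefness criterion recalled in Section~\ref{sec:SX}, to the single assertion $a_r\sprime\in N_X$.

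For the implication $r\in\Rats(X)\Rightarrow\eqref{eq:ar}$, the crucial step is to show $a_r\sprime\in N_X$. I would examine the segment $a(t)=\ampleX+t\,\tfrac{\intf{\ampleX,r}}{2}r$ for $t\in[0,1]$, joining $\ampleX$ to $a_r\sprime$. For every $s\in\Rats(X)$ with $s\ne r$ we have $\intf{r,s}\ge 0$, because distinct smooth rational curves meet non-negatively; hence $\intf{a(t),s}\ge\intf{\ampleX,s}>0$ along the whole segment, while $\intf{a(t),r}=\intf{\ampleX,r}(1-t)\ge 0$. Since the walls of $N_X$ are exactly the $(s)\sperp$ with $s\in\Rats(X)$ by Theorem~\ref{thm:NX}, the segment never leaves $N_X$, so $a_r\sprime\in N_X$, giving the second condition. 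For the first condition I would use that $a_r\sprime$ is now a nef class: if a root $s$ (which, after replacing $s$ by $-s$, we may take effective, so $\intf{\ampleX,s}>0$) satisfies $\intf{a_r\sprime,s}=0$, then writing $s$ as an effective combination of irreducible curves and using nefness of $a_r\sprime$ forces every component $C$ to have $\intf{a_r\sprime,[C]}=0$; since $\intf{a_r\sprime,a_r\sprime}>0$, the Hodge index theorem makes each such $C$ a smooth rational curve, and the computation of the previous step (applied to $[C]\in\Rats(X)$) forces $[C]=r$. As distinct rational curves cannot share a class, $s=r$, so $\Roots(a_r\sp{\prime\perp}\cap S_X)=\{r,-r\}$.

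For the converse I would assume~\eqref{eq:ar}. The second condition gives $a_r\sprime\in N_X$ as above. Because $\Roots(a_r\sp{\prime\perp}\cap S_X)=\{r,-r\}\ne\emptyset$, the characterization of $N_X\sp{\circ}$ recalled in Section~\ref{sec:SX} shows that $a_r\sprime$ lies on the boundary of $N_X$, on the single root-hyperplane $(r)\sperp$. Consequently, in a neighbourhood of $a_r\sprime$ the chamber $N_X$ is cut out by the one inequality $\intf{x,r}\ge 0$, with $\intf{\ampleX,r}>0$ fixing the sign of the defining vector. Hence $N_X\cap(r)\sperp$ contains a non-empty open subset of $(r)\sperp$ and is a wall of $N_X$ defined by $r$, so $r\in\Rats(X)$ by Theorem~\ref{thm:NX}.

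The main obstacle is the forward direction's claim that the orthogonal projection $a_r\sprime$ actually lands in $N_X$. The analogous statement is false in Euclidean geometry, where the foot of a perpendicular from an interior point may fall outside a given facet; here it is rescued precisely by the non-negativity $\intf{r,s}\ge 0$ of the intersection numbers of distinct smooth rational curves, which keeps the segment $[\ampleX,a_r\sprime]$ inside every half-space $\intf{x,s}\ge 0$ defining $N_X$. Once this is in place, the remaining verifications reduce to the nefness and interior criteria of Section~\ref{sec:SX} and to the classification of the components of an effective root via the Hodge index theorem.
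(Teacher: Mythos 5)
Your proof is correct and follows essentially the same route as the paper's: one direction rests on the local observation that, near $a_r\sprime$, the chamber $N_X$ is bounded only by the hyperplane $(r)\sperp$, so that $r$ defines a wall; the other rests on the computation $\intf{a_r\sprime, r\sprime}=\intf{\ampleX, r\sprime}+\frac{\intf{\ampleX, r}\intf{r, r\sprime}}{2}>0$ for all $r\sprime\in\Rats(X)$ with $r\sprime\ne r$. The only difference is that you spell out, via the decomposition of an effective root into irreducible curves and the Hodge index theorem, the passage from this inequality to $\Roots(\,a_r\sp{\prime\perp}\cap S_X\,)=\{r,-r\}$ and $\Sep(a_r\sprime,\ampleX)=\emptyset$, a step the paper leaves implicit.
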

 \begin{proof}
Since $\intf{a_r\sprime, r}=0$ and $\intf{a_r\sprime, a_r\sprime}>0$,
we have  $a_r\sprime \in (r)\sperp\subset \PPP_X$, and 
hence the set  $\Sep( a_r\sprime, \ampleX)$ makes sense.
In fact, the point 
 $a_r\sprime \in (r)\sperp$ is the image of $\ampleX$  by the orthogonal projection 
 to the hyperplane $(r)\sperp$ in $\PPP$.
 In particular, we have $\{r, -r\} \subset  \Roots(\,a_r\sp{\prime\perp} \cap S_X\,)$.
Then  Proposition~\ref{prop:Rats} follows from~\cite[Proposition 2.2]{VinbergSurvey}.
We present proof for the convenience of readers.
 \par
 If~\eqref{eq:ar} holds, then $a_r\sprime \in N_X$ and 
 a small neighborhood of $a_r\sprime$ in $(r)\sperp$ 
 is contained in $N_X$.
 In particular,  $r$ is a defining $(-2)$-vector of a wall of $N_X$ and hence $r\in \Rats(X)$.
 Conversely, suppose that $r\in \Rats(X)$.
 Then for any $r\sprime\in \Rats(X)$ with  $r\sprime\ne r$,
we have  
 $\intf{r, r\sprime}\ge 0$ and $\intf{\ampleX, r\sprime}>0$, and hence 
 \[
 \intf{a_r\sprime, r\sprime}=\intf{\ampleX, r\sprime}+\frac{\intf{\ampleX, r}\intf{r, r\sprime}}{2}\;\;>\;\;0.
 \]
 Therefore~\eqref{eq:ar} holds.
 \end{proof}
\subsection{Nefness of a vector of norm $0$}\label{subsec:norm0}
Suppose again that we have $\ampleX\in N_X\sp{\circ}\cap S_X$.
\begin{proposition}\label{prop:fcriterion}
Let $f$ be a non-zero vector in $ \barPPP_X\cap S_X$
with $\intf{f, f}=0$.
Then $f\in \clN_X$ if and only if \,$\Sep(a\sprime_f, \ampleX)=\emptyset$,
where $a\sprime_f:=\ampleX+\intf{\ampleX, f} f$.
\end{proposition}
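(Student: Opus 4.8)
The plan is to trade the statement about the isotropic vector $f$ for a statement about the positive-norm vector $a\sprime_f$, where the nefness criterion of the previous subsection applies, and then to settle everything by an elementary computation inside the closed positive cone. First I would record the numerics of $a\sprime_f=\ampleX+\intf{\ampleX,f}\,f$. Since $\ampleX\in\PPP_X$ and $f$ is a non-zero vector of $\barPPP_X$, a standard property of the positive cone gives $\intf{\ampleX,f}>0$; consequently
\[
\intf{a\sprime_f,a\sprime_f}=\intf{\ampleX,\ampleX}+2\intf{\ampleX,f}^2>0,\qquad
\intf{\ampleX,a\sprime_f}=\intf{\ampleX,\ampleX}+\intf{\ampleX,f}^2>0,
\]
so $a\sprime_f\in\PPP_X$ has positive norm. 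The substitution $r\mapsto -r$ identifies $\Sep(a\sprime_f,\ampleX)$ with $\Sep(\ampleX,a\sprime_f)$, so the two sets are simultaneously empty; hence by the criterion characterizing $N_X$ among positive-norm vectors of $\PPP_X$, the condition $\Sep(a\sprime_f,\ampleX)=\emptyset$ is equivalent to $a\sprime_f\in N_X$. Thus it suffices to prove
\[
f\in\clN_X\quad\Longleftrightarrow\quad a\sprime_f\in N_X.
\]

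Both sides I would describe by the walls of $N_X$: by Theorem~\ref{thm:NX} and local finiteness, $N_X=\set{x\in\PPP_X}{\intf{x,r}\ge0\ \text{for all}\ r\in\Rats(X)}$, and $\clN_X$ is cut out by the same inequalities inside $\barPPP_X$. The forward implication is then immediate: if $f\in\clN_X$, then $\intf{f,r}\ge0$ for every $r\in\Rats(X)$, so
\[
\intf{a\sprime_f,r}=\intf{\ampleX,r}+\intf{\ampleX,f}\intf{f,r}\ge\intf{\ampleX,r}>0,
\]
and $a\sprime_f$ in fact lies in $N_X\sp{\circ}$.

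The substance is the converse, which I would prove in contrapositive form. Suppose $f\notin\clN_X$; then there is $r\in\Rats(X)$ with $\intf{f,r}<0$, and I must show $\intf{a\sprime_f,r}<0$, contradicting $a\sprime_f\in N_X$. The one genuinely non-formal step is a strict inequality coming from the $W(S_X)$-invariance of the positive cone. The reflected vector $f^{s_r}=f+\intf{f,r}\,r$ is again a non-zero element of $\barPPP_X$, because $s_r\in\OG(S_X,\PPP_X)$ preserves $\barPPP_X$ and $s_r$ is injective; pairing it with the interior class $\ampleX$ therefore gives $\intf{\ampleX,f^{s_r}}>0$, i.e.
\[
\intf{\ampleX,f}>-\intf{f,r}\intf{\ampleX,r}.
\]
Multiplying this by the negative integer $\intf{f,r}$ and substituting, one obtains
\[
\intf{a\sprime_f,r}=\intf{\ampleX,r}+\intf{\ampleX,f}\intf{f,r}
<\intf{\ampleX,r}\bigl(1-\intf{f,r}^2\bigr)\le 0,
\]
where the final inequality uses $\intf{f,r}^2\ge1$ and $\intf{\ampleX,r}>0$. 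This is the required contradiction.

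The main obstacle is exactly this converse. A naive attempt to exhibit $-r$ directly as an element of $\Sep(a\sprime_f,\ampleX)$ does not work from $\intf{f,r}<0$ alone, since that only forces $\intf{a\sprime_f,r}\le\intf{\ampleX,r}-\intf{\ampleX,f}$, which need not be negative for an arbitrarily chosen point on the ray. What saves the specific point $a\sprime_f=\ampleX+\intf{\ampleX,f}\,f$ is that the reflection $f^{s_r}$ stays in $\barPPP_X$, forcing $\intf{\ampleX,f}$ to be at least $-\intf{f,r}\intf{\ampleX,r}$; the strictness of this bound, guaranteed because $\ampleX$ is interior and $f^{s_r}\ne0$, is precisely what upgrades $\le 0$ to the strict $<0$ needed to contradict $a\sprime_f\in N_X$.
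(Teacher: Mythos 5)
Your proof is correct, and its crucial step takes a genuinely different route from the paper's. Both proofs handle the forward direction in essentially the same easy way, and both use (implicitly, without detailed justification) that $\clN_X$ is cut out in $\barPPP_X$ by the inequalities $\intf{x,r}\ge 0$ for $r\in\Rats(X)$. The difference is in the converse. The paper argues geometrically: from $\intf{f,f}=0$ and $\intf{\ampleX,f}>0$ it deduces that $f$ is the class of an effective divisor $F$, that the curve $C$ with $r=[C]$, $\intf{f,r}<0$ is a proper irreducible component of $F$, hence $\intf{\ampleX,r}<\intf{\ampleX,f}$; a computation of where $(r)\sperp$ crosses the segment from $\ampleX$ to $a\sprime_f$ then shows that $r$ separates the two points. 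You obtain the needed inequality with no geometric input at all: since the reflection $s_r$ preserves $\barPPP_X$, the non-zero vector $f^{s_r}=f+\intf{f,r}r$ lies in $\barPPP_X$, so pairing with the interior point $\ampleX$ gives $\intf{\ampleX,f}>-\intf{f,r}\intf{\ampleX,r}$ --- a strengthening of the paper's inequality, since $-\intf{f,r}\ge 1$ --- and then $\intf{a\sprime_f,r}<0$ follows by pure algebra. What your route buys is generality: it uses only Theorem~\ref{thm:NX} and lattice theory, so the same argument proves the analogous statement for an arbitrary even hyperbolic lattice, a standard fundamental domain of its Weyl group, and an interior point of that domain, with no appeal to effectivity (Riemann--Roch) on a $K3$ surface. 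What the paper's route buys is the geometric picture: it exhibits the separating wall as arising from a component of the effective divisor $F$. Your preliminary reduction to $f\in\clN_X \Longleftrightarrow a\sprime_f\in N_X$ via the criterion $v\in N_X \Longleftrightarrow \Sep(\ampleX,v)=\emptyset$ is a harmless repackaging of what the paper does directly with separating roots.
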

\begin{proof}
First note that, since $f\in  \barPPP_X\setminus\{ 0\}$,
we have $\intf{\ampleX, f}>0$, $a\sprime_f\in \PPP_X$, and hence 
$\Sep(a\sprime_f, \ampleX)$ makes sense.
\par
Suppose that $f\in \clN_X$.
Since $\ampleX\in N_X\sp{\circ}$, we have 
 $a\sprime_f \in N_X\sp{\circ}$ and hence $\Sep(a\sprime_f, \ampleX)=\emptyset$.
 Suppose that $f\notin \clN_X$.
 Then there exists a smooth rational curve $C$  such that $\intf{f, [C]}<0$.
 We put $r:=[C]$. 
 Then we have $\intf{f, r}\le -1$.
 Since $\intf{f, f}=0$ and $\intf{f, \ampleX}>0$,
 there exists an effective divisor $F$ on $X$ such that $f=[F]$.
 Then $C$ is an irreducible component of $F$ such that $C\ne F$,
 and hence $\intf{\ampleX, r}< \intf{\ampleX, f}$.
 The intersection point of $(r)\sperp$ and the open line segment 
 \[
(\ampleX, f):= \set{p(t)=\ampleX +tf}{t\in \R_{> 0}}\;\;\subset\;\; \PPP_X
 \]
 is equal to $p(t_0)$,
 where 
 \[
 t_0:=-\frac{\intf{\ampleX, r}}{\intf{f, r}}\le \intf{\ampleX, r}<\intf{\ampleX, f}.
 \]
 Since $a\sprime_f=p(\intf{\ampleX, f})$,  
 the intersection point $p(t_0)$ is located 
 on the open line segment $(\ampleX, a\sprime_f) \subset (\ampleX, f)$.
 Therefore $r$ is a $(-2)$-vector separating $a\sprime_f$ and $\ampleX$.
\end{proof}
\subsection{Singularities of a normal surface birational to $X$}\label{subsec:calculatingSing}
Suppose again that we have $\ampleX\in N_X\sp{\circ}\cap S_X$.
Let $h$ be a vector  in  
$ N_X \cap S_X$, 
and let $\LLL$ be a line bundle whose class is $h$.
Then, for some large positive integer $m$,
the complete linear system $|\LLL\sp{\tensor m}|$ gives a birational morphism $X\to \barX$
to a normal surface $\barX$.
See~Saint-Donat~\cite{SaintDonat1974}.
The surface $\barX$ is smooth if and only if $h\in N_X\sp{\circ}$.
Suppose that $h\notin N_X\sp{\circ}$.
Then the singularities of $\barX$ consist of rational double points (see Artin~\cite{Artin1966}),
and the set of classes of smooth rational curves  contracted by 
 the birational morphism  $X\to \barX$ is equal to 
 \[
 \set{r\in \Rats(X)}{\intf{r, h}=0}\;\;=\;\;\Rats(X)\cap \Roots(h\sperp \cap S_X).
 \]
\subsection{Finding automorphisms from nef vectors of norm $2$}\label{subsec:findingaut}
Let $\ampleX\in S_X$ be an ample class of $X$.
Let $h$ be a vector  in  
$ N_X \cap S_X $
with  $\intf{h, h}=2$.
By a \emph{double covering},
we mean a generically finite morphism of degree $2$.
By abuse of notation, 
we write $|h|$ for the complete linear system of a line bundle whose class is $h$.
Then either one of the following holds
(see~Saint-Donat~\cite{SaintDonat1974} or Nikulin~\cite{Nikulin1990}).
\begin{itemize}
\item The complete linear system $|h|$ is base-point free
and defines a double covering   $\pi(h)\colon X\to \P^2$, or
\item  $|h|$ has a fixed component $Z$,
which is a smooth rational curve, 
and every member of $|h|$  is of the form $Z+E_1+E_2$,
where $E_1$ and $E_2$ are members of a pencil $|E|$ of elliptic curves
such that $\intf{[E], [Z]}=1$.
\end{itemize}
These two cases can be distinguished by the following criterion. 
We put
\[
\EEE:=\set{e\in S_X}{\intf{e, e}=0, \intf{e, h}=1}.
\]
Since the quadratic part of the intersection form $\intf{\phaa, \phaa}$  
restricted to the affine hyperplane of $S_X\tensor\R$ 
defined by $\intf{x, h}=1$ is 
negative-definite,
the set $\EEE$ is finite and can be calculated effectively.
\begin{itemize}
\item If $\EEE=\emptyset$, then $|h|$ is base-point free.
In this case,  we say that $h$ is a \emph{polarization of degree $2$}, and 
denote by 
$\invol(h)\in \Aut(X)$ the involution associated with the double covering 
$\pi(h)\colon X\to \P^2$ given by $|h|$.
Let 
\[
X\;\to\;\clX \;\to\; \P^2
\]
 be the Stein factorization of $\pi(h)$,
and let $B(h)\subset \P^2$ be the branch curve of the finite double covering $\clX\to \P^2$.
We can calculate the set 
\[
\Rats(X)\cap \Roots(h\sperp\cap S_X)
\]
of classes of smooth rational curves contracted by $\pi(h)$.
Hence we obtain the $\ADE$-type of $\Sing(B(h))$,
and the invariant part 
\[
\set{v\in S_X\tensor\Q}{v^{\invol(h)}=v}
\]
of  the action of $\invol(h)$ on $S_X\tensor\Q$.
Indeed, applying  to $\clX$ the theory of  \emph{canonical resolutions}
of rational double points due to Horikawa~\cite{Horikawa1975},
we have a successive blowing up $Y\to \P^2$
of $\P^2$ such that $X\to \P^2$ factors through a finite double covering $X\to Y$,
and the invariant part is equal to the pull-back of  the space 
$S_Y\tensor\Q$ of the numerical equivalence classes of curves on the rational surface $Y$.
See~\cite{Shimada2016} for detail.
From this subspace, we can calculate 
 the action of the involution $\invol(h)$ on $S_X$,
 because $\invol(h)$  acts on the orthogonal complement of the invariant subspace 
 as the scalar multiplication by $-1$.
\begin{remark}
The equality $\invol(h)=\invol(h\sprime)$ of involutions does not imply $h=h\sprime$ in general.
See Remark~\ref{rem:IJandJI}, for example.
The set of polarizations $h$ of degree $2$  that induce the same involution $\invol(h)$ is in one-to-one correspondence with 
the set of blowing-downs of $Y$ to $\P^2$.
\end{remark}
\item Suppose that $\EEE\ne \emptyset$.
Then we have a unique element $f\in \EEE$ such that
\[
f\in \clN_X
\quand z:=h-2f \in \Rats(X).
\]
We can find this $f$ by the methods in Sections~\ref{subsec:norm0} ~and~\ref{subsec:Rats}.
Then $f$ is the class of a fiber of a Jacobian fibration $\phi\colon X\to \P^1$
with $z$ being the class of the zero section $\zerosec\colon \P^1\to X$.
 From these vectors $f, z$, 
we can calculate  the Mordell-Weil group $\MW(X, \phi, \zerosec)$ 
and its action on $S_X$ by the algorithm explained in Section~\ref{sec:MW}.
\end{itemize}
%
%
%
\section{The action of a Mordell-Weil group on $S_X$}\label{sec:MW}
In this section, we assume that the characteristic of the base field $k$ is $\ne 2, 3$ for simplicity.
Let $X$ be a $K3$ surface, and $\ampleX\in S_X$ an ample class.
\par
Let $\phi\colon X\to \P^1$  be a fibration 
whose general fiber is a curve of genus $1$.
Suppose that $\phi$ has  a distinguished section  $\zerosec\colon \P^1\to X$,
that is, the pair $(\phi, \zerosec)$ is a \emph{Jacobian fibration}.
Let $\eta=\Spec k(\P^1)$ be the generic point of the base curve $\P^1$.
Then the generic fiber $E_{\eta}:=\phi\inv(\eta)$ of $\phi$ is 
an elliptic curve defined over $k(\P^1)$ with the zero element being the $k(\P^1)$-rational point
corresponding to 
$\zerosec$,
and the set 
\[
\MWphi:=\MW(X, \phi, \zerosec)
\]
 of sections of $\phi$ 
has a structure of the abelian group
with $\zerosec=0$.
This group $\MWphi $ is called the \emph{Mordell-Weil group}.
The group $\MWphi $ acts on $E_{\eta}$ 
via  the translation $x\mapsto x \addE \;\sigma$ on $E_{\eta}$,
where $\sigma\in \MWphi $ is a section and 
$\addE$ denotes the addition in the elliptic curve $E_{\eta}$.
Since $X$ is minimal, this automorphism of $E_{\eta}$
gives an automorphism of $X$.
Hence $\MWphi $ embeds in $\Aut(X)$,
and acts on the lattice $S_X$:
\begin{equation}\label{eq:MWrep}
\MWphi \to \Aut(X) \to \OG(S_X, \PPP_X).
\end{equation}
Let $f\in S_X$ be the class of a fiber of $\phi$,
and $z=[\zerosec]\in S_X$  the class of the image of  $\zerosec$.
Since the Jacobian fibration $(\phi, \zerosec)$ is uniquely determined by
the classes $f$ and $z$, we sometimes write $\MW(X, f, z)$ for $\MW(X, \phi, \zerosec)$.
The purpose of this section is to show that
we can calculate 
 the homomorphism~\eqref{eq:MWrep}
 from the classes $f$, $z$ and an ample class $\ampleX$.
 \par
 We review the theory of elliptic $K3$ surfaces, and fix some notation.
 Since $\intf{f,f}=0$, $\intf{f,z}=1$ and $\intf{z,z}=-2$,
 the classes $f$ and $z$ generate a unimodular hyperbolic sublattice $U_{\phi}$ in $S_X$ of rank $2$.
 Let $W_{\phi}$ denote the orthogonal complement of $U_{\phi}$ in $S_X$.
 Since $U_{\phi}$ is unimodular,
 we have an orthogonal direct-sum decomposition 
 \[
 S_X=U_{\phi}\oplus W_{\phi}.
 \]
 Since $W_{\phi}$ is negative-definite,
 we can calculate 
 the set 
 \[
 \Roots (W_{\phi})=\set{r\in W_{\phi}}{\intf{r,r}=-2}.
 \]
Hence  we can compute 
 \begin{equation}\label{eq:Theta}
 \Theta_{\phi}:=\Roots (W_{\phi})\cap \Rats(X)
\end{equation}
by Proposition~\ref{prop:Rats}.
 Let $\Sigma_{\phi}$ denote the sublattice of $W_{\phi}$ generated by $ \Roots (W_{\phi})$,
 and  $\tau_{\phi}$  the $\ADE$-type of the root lattice $\Sigma_{\phi}$.
Here an \emph{$\ADE$-type}  is a finite formal sum of the symbols $A_{\ell}$, $D_{\ell}$,  and $E_{\ell}$.
See, for example,~\cite[Section 1]{Ebeling2013} for the definition of $\ADE$-types of root lattices.
Then we have the following proposition.
The first part  follows from the definition of $\Rats(X)$,
and  the second part  follows from the classification of singular 
fibers of elliptic surfaces due to Kodaira and N\'eron.
See~\cite[Chapters 5 and 6]{MWLbook}. 
 \begin{proposition}\label{prop:ThetaSigma}
 The set $\Theta_{\phi}$ defined by~\eqref{eq:Theta} 
 is equal to the set of classes of smooth rational curves that are 
 contracted to points by $\phi$ and are disjoint from the zero section $\zerosec$.
 The vectors in $\Theta_{\phi}$ form a basis of the root lattice $\Sigma_{\phi}$, and 
their dual graph 
 is the Dynkin diagram of type $\tau_{\phi}$.
 \qed
 \end{proposition}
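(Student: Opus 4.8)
The plan is to prove the two parts of Proposition~\ref{prop:ThetaSigma} separately, and to lean heavily on the geometric classification of fibers already quoted in the text.

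\textbf{First part.}
First I would unwind the definition of $\Theta_{\phi}$ in~\eqref{eq:Theta}.
An element $r\in\Theta_{\phi}$ is by construction a $(-2)$-vector that lies in $W_{\phi}$ and lies in $\Rats(X)$, so it is the class $[C]$ of a genuine smooth rational curve $C$ on $X$.
Since $r\in W_{\phi}=U_{\phi}\sperp$, we have $\intf{[C],f}=0$ and $\intf{[C],z}=0$.
The condition $\intf{[C],f}=0$ says precisely that $C$ is contracted by $\phi$ to a point: a curve with $\intf{[C],f}>0$ maps surjectively onto $\P^1$, whereas a curve with $\intf{[C],f}=0$ (together with $[C]^2=-2<0$, so $[C]$ is not a multiple of $f$) must be a component of a fiber.
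The condition $\intf{[C],z}=0$ then says $C$ is disjoint from the image of the zero section $\zerosec$, because $[C]$ and $[\zerosec]=z$ are classes of irreducible curves and their intersection number is non-negative, vanishing exactly when the curves share no point.
Conversely, any smooth rational curve contracted by $\phi$ and disjoint from $\zerosec$ gives a class in $\Rats(X)$ satisfying both orthogonality conditions, hence lying in $\Roots(W_{\phi})\cap\Rats(X)=\Theta_{\phi}$.
This establishes the asserted equality of sets.

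\textbf{Second part.}
Here I would invoke the Kodaira--N\'eron classification of singular fibers, as cited from~\cite[Chapters 5 and 6]{MWLbook}.
The reducible fibers of $\phi$ have components whose dual graph is an \emph{extended} (affine) Dynkin diagram of type $\widetilde{A}$, $\widetilde{D}$, or $\widetilde{E}$; deleting the unique component that meets the zero section $\zerosec$ from each reducible fiber leaves exactly the components contracted by $\phi$ and disjoint from $\zerosec$, and removing the node for that component from the affine Dynkin diagram yields an ordinary (finite) Dynkin diagram of type $\ADE$.
By the first part these remaining components are precisely the curves whose classes constitute $\Theta_{\phi}$.
I would then argue that these classes are linearly independent and generate $\Sigma_{\phi}$: the intersection form restricted to the components of a single fiber, after deleting the $\zerosec$-meeting node, is the negative-definite Cartan matrix of the corresponding finite $\ADE$ type, so within each fiber the classes are independent; classes coming from distinct fibers are mutually orthogonal and supported on disjoint sets of curves, so independence and orthogonality across fibers is automatic.
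Since each such class is a root in $W_{\phi}$ and, by Proposition~\ref{prop:ThetaSigma}'s first part together with the general theory of root systems of $\ADE$ type, every root of $W_{\phi}$ is an integral combination of these fiber-component classes, the lattice they span is all of $\Sigma_{\phi}=\langle\Roots(W_{\phi})\rangle$.
Thus $\Theta_{\phi}$ is a basis of $\Sigma_{\phi}$ whose dual graph is the Dynkin diagram of type $\tau_{\phi}$.

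\textbf{Main obstacle.}
The genuinely non-formal step is the claim that \emph{every} root of $W_{\phi}$ arises from fiber components, i.e.\ that $\Sigma_{\phi}$ is generated by the classes in $\Theta_{\phi}$ rather than by some larger set of roots not represented by contracted curves.
I expect this to be the crux, and it is exactly what the Kodaira--N\'eron theory supplies: the lattice spanned by all fiber-component classes orthogonal to $z$ equals the lattice spanned by $\Roots(W_{\phi})$ because the root sublattice of the negative-definite lattice $W_{\phi}$ is computed fiber-by-fiber from the singular-fiber data, with no extraneous roots.
Verifying this cleanly amounts to matching the $\ADE$-type $\tau_{\phi}$ read off from $\Sigma_{\phi}=\langle\Roots(W_{\phi})\rangle$ against the type read off from the fiber configuration, which is precisely the content of the cited classification; so the heavy lifting is delegated to~\cite{MWLbook}, and the remaining work is the bookkeeping of deleting the $\zerosec$-node from each affine diagram.
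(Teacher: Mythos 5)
Your proposal is correct and takes essentially the same route as the paper, whose own proof is just a two-sentence remark: the first part "follows from the definition of $\Rats(X)$" (which you spell out correctly via $\intf{[C],f}=0$ and $\intf{[C],z}=0$), and the second part is delegated to the Kodaira--N\'eron classification via the citation~\cite[Chapters 5 and 6]{MWLbook}, exactly as you do. You also correctly isolate the one non-formal point — that $\Roots(W_{\phi})$ contains no roots outside the span of $\Theta_{\phi}$ (which can be settled by Riemann--Roch: a $(-2)$-class or its negative is effective, and an effective class orthogonal to the nef class $f$ and to $z$ is supported on fiber components disjoint from the zero section) — so your write-up is, if anything, more complete than the paper's.
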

\begin{definition}
The sublattice $U_{\phi}\oplus \Sigma_{\phi}$ of $S_X$ 
is called the \emph{trivial sublattice} of the Jacobian fibration $(\phi, \zerosec)$.
\end{definition}
The following is of fundamental importance in the theory of Mordell-Weil groups.
This holds, not only for $K3$ surfaces,  but also for elliptic surfaces in general.
See~\cite[Chapter 6]{MWLbook}.
\begin{theorem}\label{thm:ShidaTate}
Let $\;[\phantom{a}]\colon \MWphi \to \Rats(X)$ denote the mapping that
associates to each section $\mwsec\in \MWphi $ the class $[\mwsec]\in  \Rats(X)$ of the image 
of $\mwsec$.
Then the composite
\begin{equation}\label{eq:MWisom}
 \MWphi \;\;\maprightsp{\hbox{\tiny{$[\phantom{a}]$}}}\;\;   \Rats(X)\;\;\inj\;\;
 S_X \;\;\surj\;\;  S_X/(U_{\phi}\oplus \Sigma_{\phi})
\end{equation}
is an isomorphism of abelian groups.
\qed
\end{theorem}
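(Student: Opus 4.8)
The plan is to establish the isomorphism in Theorem~\ref{thm:ShidaTate} by combining the classical Mordell-Weil theory of elliptic surfaces with the lattice-theoretic description of sections via their classes in $S_X$. The statement that the composite~\eqref{eq:MWisom} is a group isomorphism is the Shioda-Tate theorem, and I would follow the standard proof as presented in~\cite[Chapter 6]{MWLbook}, adapting the notation to the present setup. The key objects are the trivial lattice $T_\phi := U_\phi \oplus \Sigma_\phi$ and the quotient $S_X / T_\phi$, and the map $\mwsec \mapsto [\mwsec] \bmod T_\phi$.

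First I would verify that the map $\mwsec \mapsto [\mwsec] \bmod T_\phi$ is a well-defined group homomorphism. The crucial geometric input is that translation by a section $\mwsec$ preserves the class $f$ of a fiber and acts on the set of fiber components; the difference between the sum of two sections and the translate is supported on fibers, so modulo the trivial lattice the height pairing and addition become compatible. Concretely, for sections $\mwsec_1, \mwsec_2$ one shows $[\mwsec_1 \addE \mwsec_2] - [\mwsec_1] - [\mwsec_2] + z$ lies in $T_\phi$, using that the fibral components together with $f$ and $z$ span $T_\phi$ and that any vertical divisor is a combination of $f$ and root vectors in $\Sigma_\phi$. This is the step requiring the Kodaira-N\'eron classification (already invoked for Proposition~\ref{prop:ThetaSigma}) to guarantee that $\Sigma_\phi$ indeed captures all components of reducible fibers not meeting $\zerosec$.

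Next I would prove surjectivity. Given an arbitrary class $v \in S_X$, I want to produce a section whose class agrees with $v$ modulo $T_\phi$. Since $\intf{f, f} = 0$, the intersection number $\intf{f, v}$ records the ``horizontal degree'' of any divisor representing $v$; subtracting a suitable multiple of $z$ and fibral classes reduces to the case $\intf{f, v} = 1$, where one uses that a divisor of relative degree $1$ over $\P^1$ meets the generic fiber in a single rational point, hence determines a section. This is where the $K3$ (more generally, elliptic surface) geometry enters: every such horizontal class is, modulo $T_\phi$, represented by an actual section, which is precisely the content that the composite hits all of $S_X / T_\phi$.

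Finally I would establish injectivity, which I expect to be the most delicate step. Here one shows that if $[\mwsec] \in T_\phi$ then $\mwsec = \zerosec$, i.e. the only section whose class is trivial modulo the trivial lattice is the zero section itself. The argument is that a section disjoint from $\zerosec$ and contracted into the trivial lattice would have to be itself a component counted in $\Sigma_\phi$, contradicting that $\Theta_\phi$ consists only of fibral curves disjoint from $\zerosec$ (Proposition~\ref{prop:ThetaSigma}); more carefully, one uses the orthogonal decomposition $S_X = U_\phi \oplus W_\phi$ and the non-degeneracy of the height pairing on $\MWphi \otimes \Q \cong (W_\phi / \Sigma_\phi) \otimes \Q$ to conclude that a section mapping to $0$ has height $0$ and hence equals $\zerosec$. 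The main obstacle is keeping the bookkeeping of fibral contributions precise enough that the height-pairing computation is valid, but since all of this is standard Shioda-Tate material I would simply cite~\cite[Chapter 6]{MWLbook} for the full details rather than reproduce them, as the theorem statement already indicates. \qed
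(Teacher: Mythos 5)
You are in an odd position here: the paper does not actually prove Theorem~\ref{thm:ShidaTate} at all. It states the theorem as a known result ("This holds, not only for $K3$ surfaces, but also for elliptic surfaces in general") and cites \cite[Chapter 6]{MWLbook}, which is exactly where your sketch also ends up. So your overall strategy --- sketch the standard Shioda--Tate argument and defer the details to that reference --- is consistent with what the paper does. Your homomorphism and surjectivity steps are the standard ones and are essentially sound: both rest on the fact that a divisor class whose restriction to the generic fiber $E_{\eta}$ is principal lies in $U_{\phi}\oplus\Sigma_{\phi}$, since every fibral class is an integral combination of $f$, the classes in $\Theta_{\phi}$, and the classes $[C_{\nu,0}]$ given by~\eqref{eq:Cnu0}.

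However, your injectivity argument has a genuine flaw. You claim that a section mapping to $0$ in $S_X/(U_{\phi}\oplus\Sigma_{\phi})$ "has height $0$ and hence equals $\zerosec$." The height (Mordell--Weil) pairing is positive definite only on $\MWphi/(\mathrm{torsion})$; a nonzero torsion section also has height $0$, so positive definiteness cannot finish the argument. Torsion is exactly the delicate case here: the theorem asserts that torsion sections inject into $S_X/(U_{\phi}\oplus\Sigma_{\phi})$, i.e., they map onto the torsion of that quotient (which is nonzero precisely when $U_{\phi}\oplus\Sigma_{\phi}$ fails to be primitive in $S_X$), and establishing this requires the fibral-contribution analysis, not the height pairing. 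Your alternative one-line argument --- that such a section "would have to be itself a component counted in $\Sigma_{\phi}$" --- also does not work as stated, because the hypothesis is $[\mwsec]\in U_{\phi}\oplus\Sigma_{\phi}$, not $[\mwsec]\in\Sigma_{\phi}$; for instance, $z$ itself is the class of a section lying in $U_{\phi}$. The standard proof in the cited reference avoids the issue entirely by constructing the inverse map: restriction of divisors to $E_{\eta}$ followed by the group law (Abel--Jacobi) gives a homomorphism $S_X\to E_{\eta}(k(\P^1))=\MWphi$ that kills $U_{\phi}\oplus\Sigma_{\phi}$ and whose composite with $\mwsec\mapsto[\mwsec]$ is the identity on $\MWphi$, so injectivity comes for free; one then shows that the kernel of this map is exactly $U_{\phi}\oplus\Sigma_{\phi}$, which yields the isomorphism. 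If you intend your write-up to stand as a proof rather than a pointer to the literature, the injectivity step must be replaced by this (or an equivalent) argument.
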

\begin{remark}\label{rem:MWL}
By the isomorphism~\eqref{eq:MWisom},
Shioda~\cite{ShiodaMWL} (see also~\cite{MWLbook}) 
introduced a structure of the positive-definite lattice 
(with a \emph{$\Q$-valued} intersection form)
on the free $\Z$-module $\MWphi/(\mathrm{torsion})$.
This lattice is called the \emph{Mordell-Weil lattice}.
The norm of the Mordell-Weil lattice is very useful,
for example, in finding good generators of $\MWphi$.
See Section~\ref{subsec:orbb6}.
\end{remark}
For a vector $v\in S_X$,
we denote by $s(v)\in \MWphi $  the section that corresponds 
to $v \bmod (U_{\phi}\oplus \Sigma_{\phi})$ by the isomorphism~\eqref{eq:MWisom}.
First, we will explain a method to calculate $[s(v)]\in \Rats(X)$ for a given $v\in S_X$.
\par
We review 
the Kodaira--N\'eron theory of singular 
fibers of an elliptic surface in more detail.
See~\cite[Chapters 5 and 6]{MWLbook},~\cite{Kodaira1963},~\cite{Neron1964}, and~\cite[Table in page 46]{Tate1975}.
Recall that
$\Theta_{\phi}$ 
is the set of classes of smooth rational curves in fibers of $\phi$ 
that is disjoint from the zero section $\zeta$, and 
 that the dual graph of 
$\Theta_{\phi}$ is the Dynkin diagram of type $\tau_{\phi}$.
 Let 
\begin{equation}\label{eq:Thetas}
 \Theta_{\phi}=\Theta_1\sqcup \cdots \sqcup \Theta_n
\end{equation}
 be the decomposition 
 according to the decomposition of the Dynkin diagram
 into connected components.
 Then two elements $r=[C]$ and $r\sprime=[C\sprime]$ of $\Theta_{\phi}$,
 where $C$ and $C\sprime$ are smooth rational curves on $X$,
 belong to the same $\Theta_{\nu}$ 
 if and only if $\phi$ maps $C$ and $C\sprime$ to the same point.
 Hence the set 
 $\{\Theta_1, \dots, \Theta_n\}$  
is in one-to-one correspondence with  the set 
 \[
\set{p\in \P^1}{\textrm{$\phi\inv(p)$ is reducible}}=\{p_1, \dots, p_n\}
 \]
in such a way that  $p_{\nu}\in \P^1$ is the point $\phi(C)$ for  $[C]\in \Theta_{\nu}$.
 We put
 \[
 \rho(\nu):=\Card(\Theta_{\nu}), 
 \quad 
\tau_{\nu}:=\textrm{the $\ADE$-type of $\Theta_{\nu}$}.
 \]
 In particular, each $\tau_{\nu}$ is either $A_{\ell}$, $D_{\ell}$, or $E_{\ell}$, and we have
$ \tau_{\phi}=\tau_1+\dots+\tau_n$.
Recall that $\Sigma_{\phi}$ is the root lattice generated by $\Theta_{\phi}$.
 Let $\Sigma_{\nu}$ be the sublattice of $\Sigma_{\phi}$ generated by the elements of $\Theta_{\nu}$.
 We have an orthogonal direct-sum decomposition
 \[
 \Sigma_{\phi}=\Sigma_{1}\oplus\cdots\oplus \Sigma_{n}.
 \]
The fiber $\phi\inv(p_{\nu})$ consists of $\rho(\nu)+1$ smooth rational curves 
\[
 C_{\nu, 0},  \; C_{\nu,1}, \; \dots,  \; C_{\nu,\rho(\nu)}
\]
 such that $\Theta_{\nu}=\{[C_{\nu,1}], \dots,  [C_{\nu,\rho(\nu)}]\}$ and that
 $ C_{\nu, 0}$ intersects the zero section  $\zerosec$.
The dual graph  of 
 \[
 \tilTheta_{\nu}:=\{[C_{\nu, 0}]\} \cup \Theta_{\nu}
 \]
 is the \emph{affine} Dynkin diagram of type $\tau_{\nu}$.
 We number the smooth rational curves in  $ \tilTheta_{\nu}$  as in Figure~\ref{fig:affDynkin}.
 \begin{figure}
\def\ha{40}
\def\hav{37}
\def\hd{25}
\def\hdv{22}
\def\he{10}
\def\hev{7}
\def\hevA{6.2}
\def\hevE{6.3}
\setlength{\unitlength}{1.4mm}
{\small
\begin{picture}(80,16)(-5, 6)
\put(34, 16){\circle*{1}}\put(34, 16){\circle{1.7}}
\put(36.5, 16.7){$C_{\nu, 0}$}
\put(13, \hevA){$C_{\nu, 1}$}
\put(20.0, \hevA){$C_{\nu, 2}$}
\put(44.5, \hevA){$C_{\nu, \ell-1}$}
\put(52.5, \hevA){$C_{\nu, \ell}$}
\put(16, \he){\circle{1}}\put(16, \he){\circle{1.7}}
\put(22, \he){\circle{1}}\put(22, \he){\circle{1.7}}
\put(28, \he){\circle{1}}\put(28, \he){\circle{1.7}}
\put(46, \he){\circle{1}}\put(46, \he){\circle{1.7}}
\put(52, \he){\circle{1}}\put(52, \he){\circle{1.7}}
\put(36, \he){$\dots$}
\put(16.83, 10.5){\line(3, 1){16.2}}
\put(51.15, 10.5){\line(-3, 1){16.2}}
\put(16.8, \he){\line(5, 0){4.3}}
\put(22.8, \he){\line(5, 0){4.3}}
\put(28.8, \he){\line(5, 0){4.3}}
\put(40.8, \he){\line(5, 0){4.3}}
\put(46.8, \he){\line(5, 0){4.3}}
\put(65,12){A fiber of type $A_{\ell}$}
\end{picture}
}
\vskip .5cm
{\small
\begin{picture}(80,14)(-5, 6)
\put(10, 13.5){$C_{\nu, 1}$}
\put(10, 5){$C_{\nu, 2}$}
\put(21.5, \hev){$C_{\nu, 3}$}
\put(28, \hev){$C_{\nu,4}$}
\put(56.5, 13.5){$C_{\nu, 0}$}
\put(45.2, \hev){$C_{\nu, \ell-1}$}
\put(56.5, 5){$C_{\nu, \ell}$}
\put(16, 14){\circle{1}}\put(16, 14){\circle{1.7}}
\put(16, 6){\circle{1}}\put(16, 6){\circle{1.7}}
\put(28, \he){\circle{1}}
\put(22, \he){\circle{1}}
\put(49, \he){\circle{1}}
\put(55, 6){\circle{1}}\put(55, 6){\circle{1.7}}
\put(55,14){\circle*{1}}\put(55,14){\circle{1.7}}
\put(16.8, 6.3){\line(3,2){4.85}}
\put(16.8, 13.7){\line(3,-2){4.85}}
\put(49.42,10.4){\line(3,2){4.75}}
\put(49.42,9.6){\line(3,-2){4.75}}
\put(22.5, \he){\line(5, 0){5}}
\put(28.5, \he){\line(5, 0){5}}
\put(43.5, \he){\line(5, 0){5}}
\put(37.5, \he){$\dots$}
\put(65,\he){A fiber of type $D_{\ell}$}
\end{picture}
}
\vskip .5cm
{\small
\begin{picture}(55,20)(-5, 6)
\put(23.7, 21.5){$C_{\nu, 0}$}
\put(23.7, 15.2){$C_{\nu, 1}$}
\put(7.8, \hevE){$C_{\nu, 2}$}
\put(14.5, \hevE){$C_{\nu, 3}$}
\put(21.0,\hevE){$C_{\nu, 4}$}
\put(27.5, \hevE){$C_{\nu, 5}$}
\put(34, \hevE){$C_{\nu, 6}$}
\put(22, 22){\circle*{1}}\put(22, 22){\circle{1.7}}
\put(22, 16){\circle{1}}
\put(10, \he){\circle{1}}\put(10, \he){\circle{1.7}}
\put(16, \he){\circle{1}}
\put(22, \he){\circle{1}}
\put(28, \he){\circle{1}}
\put(34, \he){\circle{1}}\put(34, \he){\circle{1.7}}
\put(22, 10.5){\line(0,1){5}}
\put(22, 16.5){\line(0,1){4.6}}
\put(10.85, \he){\line(5, 0){4.6}}
\put(16.5, \he){\line(5, 0){5}}
\put(22.5, \he){\line(5, 0){5}}
\put(28.5, \he){\line(5, 0){4.6}}
\put(45,14){A fiber of type $E_6$}
\end{picture}
}
\vskip .5cm
{\small
\begin{picture}(67,14)(-5, 6)
\put(28, 16){\circle{1}}
\put(29.5, 15.5){$C_{\nu, 1}$}
\put(28, 10.5){\line(0,1){5}}
\put(8.0, \hevE){$C_{\nu, 0}$}
\put(14.5, \hevE){$C_{\nu, 2}$}
\put(20.8,  \hevE){$C_{\nu, 3}$}
\put(27.1,  \hevE){$C_{\nu, 4}$}
\put(33.4,  \hevE){$C_{\nu, 5}$}
\put(39.4,  \hevE){$C_{\nu, 6}$}
\put(45.5,  \hevE){$C_{\nu, 7}$}
\put(10, \he){\circle*{1}}\put(10, \he){\circle{1.7}}
\put(16, \he){\circle{1}}
\put(22, \he){\circle{1}}
\put(28, \he){\circle{1}}
\put(34, \he){\circle{1}}
\put(40, \he){\circle{1}}
\put(46, \he){\circle{1}}\put(46, \he){\circle{1.7}}
\put(10.8, \he){\line(5, 0){4.7}}
\put(16.5, \he){\line(5, 0){5}}
\put(22.5, \he){\line(5, 0){5}}
\put(28.5, \he){\line(5, 0){5}}
\put(34.5, \he){\line(5, 0){5}}
\put(40.5, \he){\line(5, 0){4.6}}
\put(55,14){A fiber of type $E_7$}
\end{picture}
}
\vskip .5cm
{\small
\begin{picture}(70,14)(-5, 6)
\put(23.5, 15.5){$C_{\nu, 1}$}
\put(22, 10.5){\line(0,1){5}}
\put(7.8, \hevE){$C_{\nu, 2}$}
\put(14.6,  \hevE){$C_{\nu, 3}$}
\put(20.6,  \hevE){$C_{\nu, 4}$}
\put(27.1,  \hevE){$C_{\nu, 5}$}
\put(33.3,  \hevE){$C_{\nu, 6}$}
\put(39.4,  \hevE){$C_{\nu, 7}$}
\put(45.5,  \hevE){$C_{\nu, 8}$}
\put(51.5,  \hevE){$C_{\nu, 0}$}
\put(22, 16){\circle{1}}
\put(10, \he){\circle{1}}
\put(16, \he){\circle{1}}
\put(22, \he){\circle{1}}
\put(28, \he){\circle{1}}
\put(34, \he){\circle{1}}
\put(40, \he){\circle{1}}
\put(46, \he){\circle{1}}
\put(52, \he){\circle*{1}}\put(52, \he){\circle{1.7}}
\put(10.5, \he){\line(5, 0){5}}
\put(16.5, \he){\line(5, 0){5}}
\put(22.5, \he){\line(5, 0){5}}
\put(28.5, \he){\line(5, 0){5}}
\put(34.5, \he){\line(5, 0){5}}
\put(40.5, \he){\line(5, 0){5}}
\put(46.5, \he){\line(5, 0){4.6}}
\put(60,14){A fiber of type $E_8$}
\end{picture}
}
\vskip .3cm
\parbox{10cm}{
$C_{\nu, 0}$ is indicated by 
\begin{picture}(4, 6)(0, 0)\put(1.5,0.5){\circle*{1}}\put(1.5,0.5){\circle{1.7}}\end{picture}, and \\
$C_{\nu, j}$ for $j \in J_{\nu}-\{0\}$ is indicated by   \begin{picture}(4, 3)(0, 0)\put(1.5,0.5){\circle{1}}\put(1.5,0.5){\circle{1.7}}\end{picture}.
}
\vskip .2cm
\caption{Reducible fibers}\label{fig:affDynkin}
\end{figure}
 The divisor $\phi\sp{*}(p_{\nu})$ is written as 
 \[
 \phi\sp{*}(p_{\nu})=\sum_{j=0}^{\rho(\nu)} m_{\nu, j} C_{\nu, j}
 \qquad (m_{\nu, j}\in \Z_{>0}),
 \]
where the coefficients $m_{\nu, j} $ are given in Table~\ref{table:ms}. 
We put
 \[
 J_{\nu}:=\set{j}{m_{\nu, j} =1}.
 \]
 We have $0\in J_{\nu}$, and 
the class $[C_{\nu, 0}]$ is calculated by 
\begin{equation}\label{eq:Cnu0}
[C_{\nu, 0}]=f-\sum_{j=1}^{\rho(\nu)} m_{\nu, j} [C_{\nu, j}].
\end{equation}
(It is well known that $m_{\nu, j}$ with $j>0$ are the coefficients of the highest root of the root system $\Theta_{\nu}$.)
\begin{table}
\renewcommand{\arraystretch}{1.2}
\[
\begin{array}{lll}
\tau_{\nu} & j= &0,1, 2,\dots, \rho(\nu) \\
\hline 
A_{\ell} && 1,1,1, \dots, 1,1\\
D_{\ell} && 1,1,1,2,\dots, 2,1\\
E_6&& 1, 2, 1, 2, 3, 2, 1 \\
E_7 && 1, 2, 2, 3, 4, 3, 2, 1\\
E_8&&1, 3, 2, 4, 6, 5, 4, 3, 2 \\
\end{array}
\]
\vskip .3cm
\caption{Coefficients $m_{\nu, j} $}\label{table:ms}
\end{table}
Let $ \phi\sp{*}(p_{\nu})^{\sharp}$ denote the smooth part of the divisor $\phi\sp{*}(p_{\nu})$:
 \[
 \phi\sp{*}(p_{\nu})^{\sharp}=\bigcup _{j\in J_{\nu}} C_{\nu, j}\sp{\circ},
 \]
 where  $C_{\nu, j}\sp{\circ}$ is $C_{\nu, j}$ minus the intersection points of  $C_{\nu, j}$ with 
other irreducible components of $\phi\inv(p_{\nu})$. 
By Kodaira--N\'eron theory, 
we can equip   $\phi\sp{*}(p_{\nu})^{\sharp}$ with the structure of an abelian Lie group.
See~\cite[Section 5.6.1]{MWLbook}.
(When we work over $\C$,
this group structure is obtained as  the limit of the group structures of general fibers of $\phi$.)
 Then 
 the set $J_{\nu}$,
 which is regarded as the set of connected components $C_{\nu, j}\sp{\circ}$ of  $\phi\sp{*}(p_{\nu})^{\sharp}$, 
 also has a natural structure of an abelian group
 as a quotient group of $\phi\sp{*}(p_{\nu})^{\sharp}$.
The element $0\in J_{\nu}$ is the zero element.
See~Table~\ref{table:J}, which is copied from~\cite[Table in page 46]{Tate1975},  for the precise description of 
 the group structure of $J_{\nu}$.
  \begin{table}
 \[
 \renewcommand{\arraystretch}{1.4}
 \begin{array} {lll}
 \tau_{\nu} & J_{\nu} & \textrm{Group structure}\\
 \hline
A_{\ell} &\{0,1, \dots, \ell\}  & \textrm{cyclic group $\Z/(\ell+1)\Z$: the sum of $a, b\in J_{\nu}$ is}\\
 & &  \textrm{$c\in  J_{\nu}$ such that $a+b\equiv c \bmod (\ell+1)$} \\
D_{\ell}\;\; (\ell: \textrm{even})&\{0,1, 2, \ell\}   &\Z/2\Z \times \Z/2\Z  \\
D_{\ell}\;\; (\ell: \textrm{odd})&\{0,1, 2, \ell\}   &
\textrm{$\Z/4\Z$  generated by $1\in J_{\nu}$ 
with $\ell\in J_{\nu}$ being of order $2$} \\
E_6 & \{0,2,6\}  & \Z/3\Z \\
E_7 & \{0,7\}  & \Z/2\Z \\
E_8 & \{0\}  & \textrm{trivial}
 \end{array}
 \]
 \vskip 10pt 
 \caption{Group structure of $J_{\nu}$ (\cite[Table in page 46]{Tate1975})
 }
 \label{table:J}
 \end{table}
\par
Let $\Sigma_{\nu}\dual$ be the dual lattice of $\Sigma_{\nu}$,
and let $\gamma_{\nu, 1}, \dots, \gamma_{\nu, \rho(\nu)}$ be the basis of $\Sigma_{\nu}\dual$ 
dual to the basis $[C_{\nu, 1}], \dots, [C_{\nu, \rho(\nu)}]$ of $\Sigma_{\nu}$.
We also put
\[
\gamma_{\nu, 0}:=0 \;\; \in \;\; \Sigma_{\nu}\dual.
\]
For $j=0, 1, \dots, \rho(\nu)$,
we denote by $\bargamma_{\nu, j}$ the element $\gamma_{\nu, j} (\bmod\,\Sigma_{\nu})$
of  the discriminant group
$A(\Sigma_{\nu})=\Sigma_{\nu}\dual /\Sigma_{\nu}$ 
of $\Sigma_{\nu}$. 
The following is the key observation for our method:
 \begin{lemma}\label{lem:Jisom}
 The map $j\mapsto \bargamma_{\nu, j}$
 gives an isomorphism $J_{\nu}\cong \Sigma_{\nu}\dual /\Sigma_{\nu}$
 of abelian groups.
 \end{lemma}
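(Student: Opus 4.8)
The plan is to exhibit $\psi\colon J_\nu\to A(\Sigma_\nu)$, $j\mapsto \bargamma_{\nu, j}$, as a bijective homomorphism. First I would reduce to these two properties by a cardinality count. By definition $J_\nu=\set{j}{m_{\nu,j}=1}$, so by Table~\ref{table:ms} the number $\Card(J_\nu)$ of marks equal to $1$ is $\ell+1,\,4,\,3,\,2,\,1$ for $\tau_\nu=A_\ell,\,D_\ell,\,E_6,\,E_7,\,E_8$ respectively, and in each case this equals $|\det\Sigma_\nu|=\Card(A(\Sigma_\nu))$. Hence $\Card(J_\nu)=\Card(A(\Sigma_\nu))$, and since for a homomorphism between finite abelian groups of equal order injectivity, surjectivity, and bijectivity coincide, it suffices to prove that $\psi$ is an injective homomorphism.

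To see that $\psi$ is a homomorphism I would read the dual vectors off geometrically. Let $\mathrm{pr}_\nu\colon S_X\tensor\Q\to \Sigma_\nu\tensor\Q$ be the orthogonal projection; since $\intf{\mathrm{pr}_\nu(x), w}=\intf{x, w}$ for $w\in\Sigma_\nu$, it carries $S_X$ into $\Sigma_\nu\dual$. If a section $P$ of $\phi$ meets the component $C_{\nu, j}$ (necessarily transversally at one smooth point of the fibre), then $\intf{[P], [C_{\nu,k}]}=\delta_{jk}$ for $k=1,\dots,\rho(\nu)$, so by the defining property of the dual basis $\mathrm{pr}_\nu([P])=\gamma_{\nu, j}$, whence $\mathrm{pr}_\nu([P])\equiv\bargamma_{\nu,j}\pmod{\Sigma_\nu}$. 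The group structure on $J_\nu$ is, by definition, induced from the group law on $\phi\sp{*}(p_\nu)\sp{\sharp}$, so if $P,Q$ meet $C_{\nu,j},C_{\nu,k}$ then $P\addE Q$ meets $C_{\nu,\,j+k}$; and by the theorem of the cube applied to the elliptic fibration (see~\cite[Chapter 6]{MWLbook}) one has $[\,\overline{P\addE Q}\,]\equiv [P]+[Q]-[\zerosec]\pmod{\Z f+\Sigma_\phi}$. Applying $\mathrm{pr}_\nu$ and reducing modulo $\Sigma_\nu$ — which annihilates $f$ and sends every vertical class into $\Sigma_\nu$ — yields $\psi(j+k)=\psi(j)+\psi(k)$. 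Components in the prescribed classes are realised by sections over a neighbourhood of $p_\nu$: the smooth locus $\phi\sp{*}(p_\nu)\sp{\sharp}$ is the special fibre of the Néron model and, $k$ being algebraically closed, its $k$-points meet every component. In particular the argument is purely local and does not require the global reduction map $\MWphi\to J_\nu$ to be surjective.

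Finally, for injectivity I would show $\ker\psi=\{0\}$, i.e.\ that $\gamma_{\nu, j}\notin\Sigma_\nu$ whenever $j\in J_\nu$ and $j\ne 0$. This is the classical fact that the fundamental weight of a simply-laced root system attached to a special node (one whose mark is $1$) represents a non-trivial class in $\Sigma_\nu\dual/\Sigma_\nu=A(\Sigma_\nu)$; equivalently, the $j$-th column of the inverse Cartan matrix is non-integral. I would verify this directly for each of the five $\ADE$ types from Table~\ref{table:ms} and the standard description of $A(\Sigma_\nu)$, which simultaneously exhibits $\set{\bargamma_{\nu, j}}{j\in J_\nu}$ as the whole of $A(\Sigma_\nu)$. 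Together with the previous two steps this shows that $\psi$ is an isomorphism, and as a by-product recovers Tate's group law on $J_\nu$ (Table~\ref{table:J}) as the intrinsic lattice structure of $A(\Sigma_\nu)$. The main obstacle is conceptual rather than computational: the group law on $J_\nu$ is defined geometrically through the singular fibre, while that on $A(\Sigma_\nu)$ is lattice-theoretic, and the real content of the lemma is that the projection-plus-theorem-of-the-cube argument identifies the two; the type-by-type check of injectivity is then routine.
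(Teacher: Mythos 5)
Your skeleton (homomorphism, then trivial kernel, then a cardinality count) is sound, and both the count $\Card(J_{\nu})=\Card(\Sigma_{\nu}\dual/\Sigma_{\nu})$ and the case-by-case non-triviality of the weights $\gamma_{\nu,j}$ are correct. The genuine gap is in the homomorphism step. Your argument needs, for \emph{every} pair $j,k\in J_{\nu}$, sections $P,Q\in\MWphi$ passing through $C_{\nu,j}$ and $C_{\nu,k}$: both the identity $\mathrm{pr}_{\nu}([P])=\gamma_{\nu,j}$ and the theorem-of-the-cube congruence $[\,\overline{P\addE Q}\,]\equiv[P]+[Q]-[\zerosec] \pmod{\Z f+\Sigma_{\phi}}$ are identities among classes in $S_X$, hence apply only to honest curves on $X$, i.e.\ to global sections. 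So what you actually prove is additivity of $\psi$ on the image of the specialization map $\MWphi\to J_{\nu}$, and that image can be a proper subgroup — indeed trivial: for a Jacobian elliptic $K3$ with $S_X\cong U\oplus A_1$, Shioda--Tate gives Mordell--Weil rank $3-2-1=0$ and no torsion (the trivial lattice is all of $S_X$, hence primitive), so $\MWphi=0$, while $J_{\nu}\cong\Z/2\Z$ at the unique reducible fiber. Your escape route — sections of the N\'eron model near $p_{\nu}$ — is incompatible with the tools you invoke: a $k$-point of $\phi^*(p_{\nu})^{\sharp}$ lifts by smoothness/Hensel only to a section over the henselization or completion of $\OOO_{\P^1,p_{\nu}}$ (a section over the Zariski local ring extends to a global one by properness, so gives nothing new), and such a formal section is not a curve on $X$, has no class in $S_X$, and neither $\mathrm{pr}_{\nu}$ nor the congruence modulo $\Z f+\Sigma_{\phi}$ makes sense for it. Making the "purely local" idea rigorous amounts to Raynaud's theorem identifying the component group of the N\'eron model with the cokernel of the intersection matrix of the special fibre (SGA 7, Exp.\ IX) — a substantially heavier input than \cite[Chapter 6]{MWLbook}, which you neither prove nor cite.

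The paper avoids this issue entirely: it treats the group structure of $J_{\nu}$ as known input from Kodaira--N\'eron theory (Tate's table, Table~\ref{table:J}), computes the dual vectors $\gamma_{\nu,j}$ explicitly in standard coordinates for each of $A_{\ell}$, $D_{\ell}$, $E_6$, $E_7$, $E_8$, and verifies directly that $j\mapsto\bargamma_{\nu,j}$ carries that group structure isomorphically onto $\Sigma_{\nu}\dual/\Sigma_{\nu}$ (for instance, $j\gamma_{\nu,1}-\gamma_{\nu,j}\in\Sigma_{\nu}$ in type $A_{\ell}$). If you want to keep your outline, the fix is to replace the geometric homomorphism argument by this lattice-theoretic check against Table~\ref{table:J}; your kernel and cardinality steps then complete the proof unchanged.
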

 \begin{proof}
We compare Table~\ref{table:J} calculated in the  Kodaira--N\'eron theory 
 with the discriminant groups $\Sigma_{\nu}\dual/\Sigma_{\nu}$ of root lattices
 of type $A_{\ell}$,  $D_{\ell}$, and $E_{\ell}$.
 The order of $\Sigma_{\nu}\dual/\Sigma_{\nu}$ is classically known,
 and coincides with $|J_{\nu}|$.
We equip the vector space $\R^n$ with the standard basis $\ve_1, \dots, \ve_n$ 
and with the negative-definite intersection form  $\intf{\ve_i, \ve_j}:=-\delta_{ij}$.
 \par
 \smallskip
 {\bf The case $\tau_{\nu}=A_{\ell}$.}
 We embed $\Sigma_{\nu}$ into $\R^{\ell+1}$ by $[C_{\nu, j}]\mapsto \ve_j-\ve_{j+1}$ so that 
\[
\Sigma_{\nu}=\set{(x_1, \dots, x_{\ell+1})\in \Z^{\ell+1}\;}{\; x_1+\cdots +x_{\ell+1}=0}.
\]
Then we have 
\[
\gamma_{\nu, j}=\frac{1}{\ell+1} \left( -\sum_{k=1}^{j}  (\ell+1-j) \ve_k + \sum_{k=j+1}^{\ell+1}  j\ve_k\right)\;\; \in \;\; \Sigma_{\nu} \tensor\Q.
\]
It is easy to check that $ j \gamma_{\nu, 1} -\gamma_{\nu, j}\in \Sigma_{\nu}$.
Hence  $j\mapsto \bargamma_{\nu, j}$
 gives 
an isomorphism $\Z/(\ell+1)\Z\cong \Sigma_{\nu}\dual/\Sigma_{\nu}$.
\par
 \smallskip
 {\bf The case $\tau_{\nu}=D_{\ell}$.}
 We embed $\Sigma_{\nu}$ into $\R^{\ell}$ by 
 \[
[C_{\nu, 1}]\mapsto -\ve_1-\ve_2,\;\; 
[C_{\nu, 2}]\mapsto \ve_1-\ve_2,\;\; 
[C_{\nu, j}]\mapsto \ve_{j-1}-\ve_{j}\;\; (j=3, \dots, \ell), 
 \]
 so that we have 
 \[
\Sigma_{\nu}=\set{(x_1, \dots, x_{\ell})\in \Z^{\ell}\;}{\; x_1+\cdots +x_{\ell}\in 2\Z}.
\]
The vectors  $\gamma_{\nu, j}\in \Sigma_{\nu}\tensor\Q$ are given by 
\[
\gamma_{\nu, 1}=\frac{1}{\,2\,} \sum_{k=1}^{\ell}  \ve_k,
\quad 
\gamma_{\nu, 2}=-\frac{1}{\,2\,}\ve_1+ \frac{1}{\,2\,} \sum_{k=2}^{\ell}  \ve_k,
\quad 
\gamma_{\nu, j}=\sum_{k=j}^{\ell} \ve_{k} \;\; (j=3, \dots, \ell).
\]
It is easy to see that
$\bargamma_{\nu, 0}=0$, $\bargamma_{\nu, 1}$, $\bargamma_{\nu, 2}$, $\bargamma_{\nu, \ell}$
form the group  isomorphic,   via $\bargamma_{\nu, j}\mapsto j$, 
to the group $J_{\nu}=\{0,1,2,\ell\}$   described in  Table~\ref{table:J}.
Note that, for $j=3, \dots, \ell-1$, 
the element $\bargamma_{\nu, j}$ is 
either equal to $\bargamma_{\nu, 0}=0$ or equal to $\bargamma_{\nu, \ell}$.
 \par
 \smallskip
 {\bf The case $\tau_{\nu}=E_6$.}
 Using the basis $[C_{\nu,1}]$, \dots,  $[C_{\nu,6}]$ of $\Sigma_{\nu}$,
 we can write
\begin{eqnarray*}
 \gamma_{\nu, 2}&= &-\frac{1}{\,3\,} (3,4,5,6,4,2), \\
 \gamma_{\nu, 6}&= & - \frac{1}{\,3\,} (3,2,4,6,5,4)\;\;\equiv\;\; 2  \gamma_{\nu, 2} \;\; (\bmod\, \Sigma_{\nu}).
\end{eqnarray*}
Hence we have $\Sigma_{\nu}\dual/\Sigma_{\nu}= \{\bargamma_{\nu, 0}, \bargamma_{\nu, 2}, \bargamma_{\nu, 6}\}\cong \Z/3\Z$.
Note that we have 
$  \bargamma_{\nu, 5}= \bargamma_{\nu, 2}$,
$ \bargamma_{\nu, 3}= \bargamma_{\nu, 6}$,
$ \bargamma_{\nu, 1}= \bargamma_{\nu, 4}= \bargamma_{\nu, 0}$  in 
$\Sigma_{\nu}\dual/\Sigma_{\nu}$.
 \par
 \smallskip
 {\bf The case $\tau_{\nu}=E_7$.}
 Using the basis $[C_{\nu,1}]$, \dots,  $[C_{\nu,7}]$ of $\Sigma_{\nu}$,
 we can write
\[
 \gamma_{\nu, 7}= -\frac{1}{\,2\,} (3,2,4,6,5,4,3).
\]
Hence we have $\Sigma_{\nu}\dual/\Sigma_{\nu}= \{\bargamma_{\nu, 0}, \bargamma_{\nu, 7}\}\cong \Z/2\Z$.
Note that we have 
$  \bargamma_{\nu, 1}= \bargamma_{\nu, 5}=\bargamma_{\nu, 7}$ and 
$ \bargamma_{\nu, 2}= \bargamma_{\nu, 3}= \bargamma_{\nu, 4}=\bargamma_{\nu, 6}=\bargamma_{\nu, 0}$  in 
$\Sigma_{\nu}\dual/\Sigma_{\nu}$.
 \par
 \smallskip
 {\bf The case $\tau_{\nu}=E_8$.} Trivial.
 \end{proof}
A section $\mwsec \in \MWphi $
intersects $\phi\inv(p_{\nu})$ at a single point $\specialization_{\nu}(\mwsec)$, 
and the intersection is transverse.
Hence
the intersection point  $\specialization_{\nu}(\mwsec)$ is a smooth point of the fiber, that is,  we have $\specialization_{\nu}(\mwsec)\in  \phi\sp{*}(p_{\nu})^{\sharp}$ .
Thus we have the \emph{specialization map}
\[
\specialization_{\nu} \colon \MWphi \to  \phi\sp{*}(p_{\nu})^{\sharp}.
\]
By the definition of the group structure on $\phi\sp{*}(p_{\nu})^{\sharp}$,  
the map $\specialization_{\nu}$ is a group homomorphism.
(See~\cite[Section 5.6.1]{MWLbook}.)
The inclusion $\Sigma_{\nu}\inj S_X$ gives rise to the restriction homomorphism
$S_X\to \Sigma_{\nu}\dual$, which we write as 
\[
v\mapsto v|_{\nu}.
\]
For $\mwsec\in  \MWphi $, we have
\[
[\mwsec]|_{\nu}=\gamma_{\nu, j[\mwsec]},
\]
where $j[\mwsec]\in J_{\nu}$ is the index of the connected component of
$\phi\sp{*}(p_{\nu})^{\sharp}$ intersecting $\mwsec$,
or equivalently,
containing the point $\specialization_{\nu}(\mwsec)$.
The kernel of the composite of $S_X\to \Sigma_{\nu}\dual$ and 
$\Sigma_{\nu}\dual\to \Sigma_{\nu}\dual/\Sigma_{\nu}$ contains
the trivial sublattice $U_{\phi}\oplus \Sigma_{\phi}$. 
Hence, by Theorem~\ref{thm:ShidaTate}, 
the natural mapping 
\begin{equation}\label{eq:MWToASigma}
\MWphi\;\maprightsp{\hbox{\tiny{$[\phantom{a}]$}}} \;  S_X  \;\maprightsp{|_{\nu} } \;
 \Sigma_{\nu}\dual  \; \surj\;  \Sigma_{\nu}\dual/\Sigma_{\nu}
\end{equation}
is a group homomorphism.
By definition, 
the following diagram is commutative:
\begin{equation}\label{eq:diagmamMW}
\renewcommand{\arraystretch}{1.2}
\begin{array}{ccc}
\MWphi  &\maprightsp{\hbox{\;\tiny\eqref{eq:MWToASigma}\;\mystrutd{3.2pt}}}&\Sigma_{\nu}\dual/\Sigma_{\nu} \\
\mapdownleft{\specialization_{\nu}} & & \mapdown \rlap{$\wr$\;\; \tiny by Lemma~\ref{lem:Jisom}} \\
\phi^*(p_{\nu})^{\sharp}  &\surj&J_{\nu},
\end{array}
\end{equation}
where the lower horizontal arrow is the natural quotient homomorphism.
 \par
 Suppose that a vector $v\in S_X$ is given.
 Then the class $[s(v)]\in S_X$ of the section $s(v)\in \MWphi $ 
 corresponding to $v\bmod (U_{\phi}\oplus \Sigma_{\phi})$ by~\eqref{eq:MWisom} satisfies the following:
 \begin{enumerate}[(i)]
 \item $\intf{[s(v)], [s(v)]}=-2$ and  $\intf{[s(v)],f}=1$.
 Hence,  by the orthogonal direct-sum decomposition $S_X=U_{\phi}\oplus W_{\phi}$,
 we have $[s(v)]=t f + z +w$, 
 where $w\in W_{\phi}$ and $t=-\intf{w, w}/2$.
 \item $[s(v)]\equiv v \bmod U_{\phi}\oplus \Sigma_{\phi}$.
 In particular, for each $\nu=1, \dots, n$,
 we have 
 \[
 ([s(v)]-v)|_{\nu} \in \Sigma_{\nu}.
 \]
 \item For each $\nu=1, \dots, n$,
 there exists a unique index $j(v)\in J_{\nu}$
 such that $[s(v)]|_{\nu}=\gamma_{\nu, j(v)}$.
This $j(v)$ is the index $j$ of the connected component $C_{\nu, j}\sp{\circ}$ 
 that contains the intersection point $\specialization_{\nu}(s(v))$ of 
$s(v)$ and $\phi\inv(p_{\nu})$,
 and hence $j(v)$ is the image of $v$ by $S_X\to J_{\nu}$ in 
 the diagrams~\eqref{eq:MWToASigma}~and~\eqref{eq:diagmamMW}.
 \end{enumerate}
 Therefore the following calculations compute the class $[s(v)]$.
  \begin{enumerate}[Step 1.]
  \item Let $v\sprime\in W_{\phi}$ be the image of $v$ by the projection to $W_{\phi}$
 under  the orthogonal direct-sum decomposition $S_X=U_{\phi}\oplus W_{\phi}$.
 \item For each $\nu=1, \dots, n$,
 calculate the element $\delta_{\nu}(v\sprime):=v\sprime |_{\nu} \bmod \Sigma_{\nu}$ 
 of the discriminant group $\Sigma_{\nu}\dual/\Sigma_{\nu}$,
 and find the index $j(v)\in J_{\nu}$ such that 
 $\delta_{\nu}(v\sprime)$ is equal to $\bargamma_{\nu, j(v)}$. 
 Then the element  $ v\sprime|_{\nu}- \gamma_{\nu, j(v)}$ of $\Sigma_{\nu}\dual$ 
belongs to $\Sigma_{\nu}$.
 We calculate the integers $\alpha_{\nu, k} $ such that
 \[
 v\sprime|_{\nu}- \gamma_{\nu, j(v)}=\sum_{k=1}^{\rho(\nu)} \alpha_{\nu, k} [C_{{\nu}, k}].
 \]
 \item 
 We put
 \[
 v\spprime:=v\sprime-\sum_{\nu=1}^n \sum_{k=1}^{\rho(\nu)} \alpha_{\nu, k} [C_{{\nu}, k}].
 \]
 Then we have 
 \[
 [s(v)]=t f +z + v\spprime,
 \]
 where $t:=-\intf{v\spprime, v\spprime}/2$.
  \end{enumerate}
 %
 %
 \par
 Next, we explain how to calculate,
 for a given vector $v\in S_X$,
 the isometry 
 \[
 g(s(v))\in \OG(S_X, \PPP_X)
 \]
 induced by the translation $x\mapsto x\addE\, s(v)$ on $E_{\eta}$ by the section $s(v)\in \MWphi $,
 where $\addE$ is the addition on the elliptic curve $E_{\eta}$ over $k(\P^1)$.
 Let $m$ be the Mordell-Weil rank of $\phi$:
 \[
 m:=\dim (\MWphi \tensor\Q)=\rank S_X-2-\sum_{\nu=1}^n \rho(\nu),
 \]
 where the second equality follows from Theorem~\ref{thm:ShidaTate}.
 We choose vectors $u_1, \dots, u_m\in S_X$ such that 
 their images by 
 \[
 S_X\to (S_X/(U_{\phi}\oplus \Sigma_{\phi}))\tensor\Q
 \]
 form a basis of $ \MWphi \tensor\Q$.
 Then $S_X\tensor\Q$ is spanned by
 \begin{equation}\label{eq:Qbasis}
 \parbox{11cm}{
 $f$, \;$z=[s(0)]$, \;$[s(u_1)]$, \dots, $[s(u_m)]$, \; and the vectors 
 $[C_{\nu, 1}]$, \dots, $[C_{\nu, \rho(\nu)}]$
 in $ \Theta_{\nu}$ for $\nu=1, \dots, n$.}
 \end{equation}
 Therefore, to calculate $ g(s(v))$, it is enough to calculate the images 
 of vectors in~\eqref{eq:Qbasis} by $ g(s(v))$.
It is obvious that
\begin{eqnarray*}
f^{ g(s(v))} &= & f, \\
z^{ g(s(v))} &= & [s(v)], \\
{} [s(u_{\mu})]^{ g(s(v))} &= & [s(u_{\mu}+v)]\;\;\textrm{for}\;\; \mu=1, \dots, m.
\end{eqnarray*}
Hence it remains only to calculate the image by $ g(s(v))$  of the classes in $\Theta_{\nu}$.
Note that $ g(s(v))$ induces a permutation on 
the set $\tilTheta_{\nu}=\{[C_{\nu, 0}]\} \cup \Theta_{\nu} $
that preserves the subset $J_{\nu}$ of 
classes of reduced irreducible components. 
By the method described in Step 2 above,
we calculate the index $j(v)\in J_{\nu}$,
which is the image of $s(v)\in \MWphi $ 
by the composite of $\specialization_{\nu}\colon \MWphi \to \phi^*(p_{\nu})\sp{\sharp}$ and 
$\phi^*(p_{\nu})\sp{\sharp} \to J_{\nu}$.
The translation of $\phi^*(p_{\nu})\sp{\sharp}$ by $\specialization_{\nu}(s(v))$
induces the translation of $J_{\nu}$ by $j(v)$.
Checking each Dynkin diagram of type $A_{\ell}$, $D_{\ell}$, $E_{\ell}$,
we see that this permutation of $J_{\nu}$
extends \emph{uniquely} to a permutation of $\tilTheta_{\nu}$
that preserves the dual graph.
See Table~\ref{table:permj},
in which we abbreviate  $\tilTheta_{\nu}=\{[C_{\nu, 0}]\,\dots, [C_{\nu, \rho(\nu)}]\}$ 
as $\{0, 1, \dots, \rho(\nu)\}$.
Hence the image of each element of $\tilTheta_{\nu}$ 
by $ g(s(v))$ is computed.
Using~\eqref{eq:Cnu0},
we can calculate the action of  $ g(s(v))$  on the classes of $\Theta_{\nu}$.
\begin{table}
\[
\renewcommand{\arraystretch}{1.2}
\begin{array}{llll}
\tau_{\nu} & J_{\nu} & j (v) & \textrm{Permutation of $\tilTheta_{\nu}$} \\
\hline
A_{\ell} &\Z/(\ell+1)\Z& a & i \mapsto (i+a) \bmod (\ell+1) \mystruthd{15pt}{10pt}\\
\hline
D_{\ell} & (\Z/2\Z)^2 &0 & \id \\
(\ell: \textrm{even})  && 1 & 0\leftrightarrow 1,
\quad
2\leftrightarrow \ell,
\quad k \leftrightarrow \ell+2-k \;\;(2<k<\ell)\\
&& 2 &0\leftrightarrow 2,
\quad
1\leftrightarrow \ell,
\quad k \leftrightarrow \ell+2-k \;\;(2<k<\ell)
 \\
&& \ell &  0\leftrightarrow \ell,
\quad
1\leftrightarrow 2,
\quad k \leftrightarrow k  \;\;(2<k<\ell) \\
\hline
D_{\ell} & \Z/4\Z& 0 & \id \\
(\ell: \textrm{odd}) &&1& 0 \mapsto  1 \mapsto \ell \mapsto 2 \mapsto 0,
\quad k \leftrightarrow \ell+2-k  \;\;(2<k<\ell) \\
&& 2 & 0 \mapsto 2 \mapsto \ell \mapsto 1 \mapsto 0,
\quad k \leftrightarrow \ell+2-k   \;\;(2<k<\ell) \\
&& \ell &
0\leftrightarrow \ell,
\quad
1\leftrightarrow 2,
\quad 
k \leftrightarrow k  \;\;(2<k<\ell) \\
\hline
E_6 & \Z/3\Z & 0 & \id \\
&& 2 & 0\mapsto 2\mapsto 6 \mapsto 0, \quad 1 \mapsto 3 \mapsto 5 \mapsto 1, \quad 4\mapsto 4 \\
&& 6 & 0\mapsto 6\mapsto 2 \mapsto 0, \quad 1 \mapsto 5 \mapsto 3 \mapsto 1, \quad 4\mapsto 4 \\
\hline
E_7 & \Z/2\Z & 0 & \id \\
&& 7 & 0  \leftrightarrow 7, \quad 1  \leftrightarrow 1,\quad  4  \leftrightarrow 4,\quad 2  \leftrightarrow 6,\quad 3  \leftrightarrow 5\quad  \\
\hline
E_8 &0 & 0 & \id \\
\end{array}
\]
\vskip .5cm
\caption{Permutations of $\tilTheta_{\nu}$}\label{table:permj}
\end{table}
\section{Borcherds' method}\label{sec:Borcherds}
\subsection{An algorithm on a graph}\label{subsec:VE}
We recall an algorithm introduced in~\cite{BrandhorstShimada2021}.
Let $(V, E)$ be a simple non-oriented connected graph,
where $V$ is the set of vertices
and $E$ is the set of edges,
which is a set of non-ordered pairs of distinct elements of $V$:
\[
E\subset \dbinom{V}{2}.
\]
We say that $v, v\sprime\in V$ are \emph{adjacent} if $\{v, v\sprime\}\in E$.
The set  $V$ may be infinite.
The assumption that $(V, E)$ be connected is important.
Suppose that a group $G$ acts on $(V, E)$ from the right.
For vertices $v, v\sprime\in V$,  we put
\begin{equation*}\label{eq:TG}
\TG (v, v\sprime):=\set{g\in G}{v^g=v\sprime}, 
\end{equation*}
and define the \emph{$G$-equivalence relation} $\sim$ on $V$ by
\[
v\sim v\sprime\;\;\Longleftrightarrow\;\; \TG (v, v\sprime)\ne\emptyset.
 \]
 Thus we have two relations on $V$, 
 the adjacency relation and the $G$-equivalence relation.
Suppose that $V_0$ is
a non-empty subset of $V$ with the following properties.
\begin{enumerate}[(a)]
\item If $v, v\sprime \in V_0$ are distinct,
then $v$ and $ v\sprime$ are not $G$-equivalent.
\item
If a vertex $v\in V$ is adjacent to a vertex in $V_0$,
then $v$ is $G$-equivalent to a vertex in $V_0$.
\end{enumerate}
We put
\[
\widetilde{V}_0:=\set{v\in V}{ \textrm{$v$ is adjacent to a vertex in $V_0$}}.
\]
Then, for each  $v\in \widetilde{V}_0$,
there exists a  vertex $u_0(v)\in V_0$  such that  
$\TG (v, u_0(v))\ne \emptyset$.
Note that  
 $u_0(v)\in V_0$  is unique by assumption (a).
We choose an element $h(v)$ from $ \TG (v, u_0(v))$
for each  $v\in \widetilde{V}_0$, and 
put
\begin{equation}\label{eq:HHH}
\HHH:=\set{h(v)}{v\in \widetilde{V}_0}.
\end{equation}
\begin{proposition}[Proposition 4.1 of \cite{BrandhorstShimada2021}]\label{prop:VE}
The subset $V_0\subset V$ is a complete set of representatives of 
the orbit decomposition of $V$ by $G$, 
and the group $G$ is generated by the union of $\HHH$
 and the stabilizer subgroup 
$\Stab_G(v_0)=\TG (v_0, v_0)$
of a vertex $v_0\in V_0$.
\qed
\end{proposition}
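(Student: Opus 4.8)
The plan is to deduce both assertions from one combinatorial principle: because $(V,E)$ is connected, any non-empty subset $W\subseteq V$ that contains every neighbour of each of its members must equal $V$ (otherwise there would be no edge between $W$ and $V\setminus W$, contradicting connectedness). I will apply this principle twice, each time verifying the neighbour-closure by using that $G$ acts on $(V,E)$ by graph automorphisms, so that $v^g$ and $(v\sprime)^g$ are adjacent whenever $v$ and $v\sprime$ are.

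For the orbit statement, I first establish completeness. Let $U$ be the set of vertices that are $G$-equivalent to some vertex of $V_0$; then $V_0\subseteq U$, so $U\ne\emptyset$. Suppose $v\in U$ and $v\sprime$ is adjacent to $v$. Choose $g\in G$ with $v^g\in V_0$. Then $(v\sprime)^g$ is adjacent to $v^g\in V_0$, so $(v\sprime)^g\in\widetilde{V}_0$, and property (b) gives that $(v\sprime)^g$ is $G$-equivalent to a vertex of $V_0$. Since $\sim$ is an equivalence relation (reflexive, symmetric and transitive because $G$ is a group) and $v\sprime\sim(v\sprime)^g$, we conclude $v\sprime\in U$. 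Thus $U$ is neighbour-closed, hence $U=V$: every orbit meets $V_0$. By property (a) distinct vertices of $V_0$ lie in distinct orbits, so $V_0$ meets each orbit exactly once and is therefore a complete set of representatives. In particular, each $v\in V$ has a unique representative $r(v)\in V_0$ with $v\sim r(v)$, and whenever $v^g\in V_0$ for some $g\in G$ one has $v^g=r(v)$.

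For the generation statement, set $G\sprime:=\langle\,\HHH\cup\Stab_G(v_0)\,\rangle\subseteq G$, and let $W$ be the set of vertices that some element of $G\sprime$ carries into $V_0$ (equivalently, onto their representative). Then $V_0\subseteq W$, so $W\ne\emptyset$. If $v\in W$ and $v\sprime$ is adjacent to $v$, pick $g\in G\sprime$ with $v^g\in V_0$; then $(v\sprime)^g$ is adjacent to a vertex of $V_0$, so $(v\sprime)^g\in\widetilde{V}_0$, and $h((v\sprime)^g)\in\HHH$ sends $(v\sprime)^g$ to $u_0((v\sprime)^g)\in V_0$. Hence $g\,h((v\sprime)^g)\in G\sprime$ carries $v\sprime$ into $V_0$, so $v\sprime\in W$. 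By the principle, $W=V$. Finally, take any $g\in G$. The vertex $v_0^g$ is $G$-equivalent to $v_0$, so $r(v_0^g)=v_0$; and since $v_0^g\in W$, there is $g\sprime\in G\sprime$ with $v_0^{g g\sprime}=v_0$. Therefore $g g\sprime\in\Stab_G(v_0)\subseteq G\sprime$, whence $g\in G\sprime$, and so $G\sprime=G$.

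The only genuine content is the connectivity principle; everything else is bookkeeping. The step I expect to be most delicate is the neighbour-closure in the generation argument: one must first transport $v\sprime$ by an element $g$ already known to lie in $G\sprime$, landing it adjacent to $V_0$, and only then apply the prescribed generator $h((v\sprime)^g)$, so that the composite stays inside $G\sprime$. Getting this order right is exactly what lets the induction propagate along paths of the connected graph, and is the crux that makes $\HHH$ together with a single stabiliser suffice.
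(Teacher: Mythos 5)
Your proof is correct. The paper itself does not prove this proposition but cites it as Proposition 4.1 of the Brandhorst--Shimada reference, and your argument---propagating $G$-equivalence to $V_0$ (resp.\ reachability of $V_0$ by elements of the subgroup generated by $\HHH$ and $\Stab_G(v_0)$) along edges of the connected graph, with the careful ordering $g$ first, then $h((v\sprime)^g)$---is exactly the standard induction-on-connectedness proof given there.
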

In~\cite[Section 4.1]{BrandhorstShimada2021},
we presented an algorithm to obtain $V_0$ and $\HHH$
under the assumption that $(V, E)$ and $G$ have 
certain \emph{local effectiveness properties}.
\subsection{Period condition}
In this subsection,
we assume that the base field $k$ is the complex number field $\C$,
and introduce \emph{period condition} on elements of $\OG(S_X)$.
The period condition is, 
however, also defined when $X$ is a supersingular $K3$ surface in positive characteristic.
See, for example,~\cite{KondoShimada2014b}.
\par
Let $L$ be an even lattice, and $A(L)=L\dual/L$ the discriminant group of $L$.
We define a quadratic form 
\[
q(L)\colon A(L)\to \Q/2\Z
\]
by $q(x \bmod L):=\intf{x,x} \bmod 2\Z$.
This finite quadratic form is called the \emph{discriminant form} of $L$,
which was introduced by Nikulin~\cite{Nikulin1979}.
Let $M$ be a primitive sublattice of $L$, and $N$ the orthogonal complement of $M$ in $L$.
Then we have natural embeddings 
\[
M\oplus N\;\;\subset\;\; L\;\;\subset \;\; L\dual\;\;\subset\;\; M\dual\oplus N\dual.
\]
Suppose that $L$ is unimodular, that is, $L\dual=L$.
Then the submodule 
\[
L/(M\oplus N)\;\;\subset \;\; A(M)\times A(N)
\]
is a graph of an isomorphism $A(M)\cong  A(N)$,
which induces an isomorphism
\begin{equation*}\label{eq:qsisom}
\iota_L\colon q(M)\cong -q(N).
\end{equation*}
Nikulin~\cite{Nikulin1979} proved the following.
\begin{proposition}\label{prop:NikulinExtend}
Suppose that $L$ is unimodular.
Let $G_N$ be a subgroup of $\OG(N)$,
and let $q(G_N)\subset \Aut(q(N))$ be the image of $G_N$
by the natural homomorphism $\OG(N)\to \Aut(q(N))$.
Then an isometry $g_M$ of $M$ extends to an isometry $g_L$ of $L$
such that its restriction $g_L|N$ to $N$ is an element of $G_N$ if and only if 
the action of $g_M$ on $q(M)$ belongs to $q(G_N)$ via the isomorphism
$\Aut(q(M))\cong \Aut(q(N))$ induced by $\iota_L\colon q(M)\cong -q(N)$.
\qed
\end{proposition}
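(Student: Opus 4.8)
The plan is to translate the extension problem into a statement about how the induced action on discriminant groups interacts with the graph of $\iota_L$ inside $A(M)\times A(N)$, and then simply read off the criterion.

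First I would reduce the problem to a question about a single pair $(g_M,g_N)$. An isometry $g_M$ of $M$ extends to an isometry $g_L$ of $L$ with $g_L|N\in G_N$ precisely when there exists $g_N\in G_N$ such that the pair $(g_M,g_N)\in \OG(M)\times \OG(N)\subset \OG(M\oplus N)$, extended $\Q$-linearly to $(M\oplus N)\tensor\Q=L\tensor\Q$, carries $L$ onto itself; in that case $g_L:=(g_M,g_N)|L$ is the desired extension, and conversely any such $g_L$ preserves $N=M\sperp$ by primitivity and so equals $(g_M,g_L|N)$. Thus everything reduces to deciding, for a given $(g_M,g_N)$, whether its $\Q$-linear extension preserves the submodule $L\subset M\dual\oplus N\dual$. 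Here I would invoke the overlattice correspondence: since $(g_M,g_N)$ is an isometry it preserves $(M\oplus N)\dual=M\dual\oplus N\dual$, hence it permutes the cosets of $M\oplus N$ compatibly with the induced automorphism $(\bar g_M,\bar g_N)$ of the discriminant group $A(M)\times A(N)$, where $\bar g_M$, $\bar g_N$ are the images under $\OG(M)\to\Aut(q(M))$ and $\OG(N)\to\Aut(q(N))$. Since $L$ is the preimage of $H:=L/(M\oplus N)$ under $M\dual\oplus N\dual\surj A(M)\times A(N)$, the extension preserves $L$ if and only if $(\bar g_M,\bar g_N)$ preserves $H$.

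Next I would identify $H$ with the graph of $\iota_L$ and carry out the graph computation. As recalled in the excerpt, unimodularity of $L$ forces $H$ to be the graph $\Gamma:=\set{(a,\iota_L(a))}{a\in A(M)}$ of the group isomorphism $\iota_L\colon A(M)\to A(N)$ underlying $\iota_L\colon q(M)\cong -q(N)$. A direct verification then shows that $(\bar g_M,\bar g_N)$ preserves $\Gamma$ if and only if $\bar g_N(\iota_L(a))=\iota_L(\bar g_M(a))$ for all $a\in A(M)$, i.e. $\bar g_N\circ\iota_L=\iota_L\circ\bar g_M$, equivalently $\bar g_M=\iota_L\inv\circ\bar g_N\circ\iota_L$. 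This is exactly the statement that $\bar g_M$ and $\bar g_N$ correspond under the conjugation isomorphism $\Aut(q(N))\to\Aut(q(M))$ induced by $\iota_L$.

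Finally I would assemble the equivalence. By the reduction, $g_M$ admits an extension $g_L$ with $g_L|N\in G_N$ if and only if there is $g_N\in G_N$ with $\bar g_N=\iota_L\circ\bar g_M\circ\iota_L\inv$; since the automorphisms $\bar g_N$ arising from $g_N\in G_N$ run over exactly $q(G_N)$, this holds if and only if $\iota_L\circ\bar g_M\circ\iota_L\inv\in q(G_N)$, which is precisely the assertion that the action of $g_M$ on $q(M)$ belongs to $q(G_N)$ via the identification $\Aut(q(M))\cong\Aut(q(N))$ given by $\iota_L$. The crux of the argument is the reduction in the first step, namely the lifting lemma that an isometry of $M\oplus N$ extends to $L$ exactly when its induced action on $A(M)\times A(N)$ preserves $H=L/(M\oplus N)$; everything afterwards is the elementary computation with the graph $\Gamma$. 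The one place to be careful is that one genuinely uses that an isometry preserves the dual lattice $M\dual\oplus N\dual$, so that $(g_M,g_N)$ descends to a well-defined automorphism of $A(M)\times A(N)$ and the preimage description of $L$ through $H$ is respected; once this is in hand, both directions of the lifting lemma are immediate.
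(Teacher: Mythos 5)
Your proof is correct. Note that the paper does not actually prove this proposition: it is quoted from Nikulin's 1979 paper and stamped with a \qed, so there is no internal argument to compare against. Your argument --- reducing the extension problem to whether the pair $(g_M,g_N)$ preserves the overlattice $L$ inside $M\dual\oplus N\dual$, identifying $L/(M\oplus N)$ with the graph of the isomorphism $A(M)\cong A(N)$ underlying $\iota_L$, and reading off the conjugation condition $\bar g_N\circ\iota_L=\iota_L\circ\bar g_M$ --- is precisely the standard overlattice-as-graph argument behind Nikulin's result, so you have supplied the proof the paper delegates to the reference. (One trivial slip: $g_L$ preserves $N=M\sperp$ simply because an isometry fixing $M$ setwise fixes its orthogonal complement in $L$; primitivity of $M$ is not what is used there, but rather in the setup identifying $L/(M\oplus N)$ with a graph.)
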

\par
We apply this result to the primitive embedding of $S_X$ into  
the even unimodular lattice $H^2(X, \Z)$ of rank $22$
defined by the cup product.
Let $T_X$ denote the orthogonal complement of $S_X$ in $H^2(X, \Z)$,
which we call the \emph{transcendental lattice} of $X$.
Then $H^2(X, \Z)$  induces an isomorphism 
\[
\iota_{H}\colon q(S_X)\cong -q(T_X).
\]
Note that $T_X$ is the minimal primitive submodule of $H^2(X, \Z)$
such that $T_X\tensor \C$ contains the period $H^{2,0}(X)=\C \omega_X \subset H^2(X, \C)$ of $X$,
where $\omega_X$ is a nonzero holomorphic $2$-form on $X$.
\begin{definition}\label{def:period}
We put 
\[
\OG(T_X, \omega_X):=\set{g_T\in \OG(T_X)}{\textrm{$g_T\tensor\C$ preserves $H^{2,0}(X)$}}.
\]
Then we say that $g_S \in \OG(S_X)$ satisfies  the \emph{period condition} if 
the action  of $g_S$ on $q(S_X)$ is equal to the action  on $q(T_X)$ of some of $g_T\in \OG(T_X, \omega_X)$
via the isomorphism $\iota_{H} \colon q(S_X)\cong -q(T_X)$ induced by $H^2(X, \Z)$.
\end{definition}
By  Proposition~\ref{prop:NikulinExtend},
we see that an  isometry $g_S \in \OG(S_X)$ extends to an isometry of $H^2(X,\Z)$ preserving the period $H^{2,0}(X)$
if and only if $g_S$ satisfies the period condition.
By Torelli theorem~\cite{Torelli1971}
(see also~\cite[Chapter VIII]{BHPV2004}), we obtain the following:
\begin{theorem}\label{thm:Torelli}
We put
\[
G:=\Image(\Aut(X)\to \OG(S_X, \PPP_X)).
\]
Then $g\in \OG(S_X, \PPP_X)$ belongs to $G$ if and only if 
$g $ preserves $N_X$ and satisfies the period condition.
\qed
\end{theorem}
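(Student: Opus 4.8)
The plan is to deduce the theorem from the strong (refined) form of the Torelli theorem, using the equivalence recorded immediately before the statement: an isometry $g\in\OG(S_X)$ satisfies the period condition if and only if it extends to an isometry $\tilde g$ of $H^2(X,\Z)$ that preserves the period $H^{2,0}(X)=\C\omega_X$. Granting this, the task is to match the two descriptions of $G=\Image(\Aut(X)\to\OG(S_X,\PPP_X))$: geometrically, every $\psi\in\Aut(X)$ induces a Hodge isometry of $H^2(X,\Z)$ that preserves the nef-and-big cone, while conversely a Hodge isometry preserving the ample cone is induced by an automorphism.

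For the forward implication, suppose $g$ is the image of some $\psi\in\Aut(X)$. First I would note that $\psi$ induces an isometry $\tilde g$ of $H^2(X,\Z)$ preserving the Hodge decomposition, in particular $H^{2,0}(X)$; since $\psi$ preserves the N\'eron--Severi lattice, $\tilde g$ preserves $S_X$ and its orthogonal complement $T_X$, so its restriction to $S_X$ is $g$ and its restriction to $T_X$ lies in $\OG(T_X,\omega_X)$. By the recorded equivalence, $g$ then satisfies the period condition. Next, since $\psi$ permutes the smooth rational curves on $X$ and sends ample classes to ample classes, the induced isometry $g$ preserves $\Rats(X)$ and the interior $N_X^{\circ}$, and therefore $N_X^g=N_X$. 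Thus $g\in\OG(S_X,N_X)$ and satisfies the period condition.

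For the converse, suppose $g\in\OG(S_X,\PPP_X)$ preserves $N_X$ and satisfies the period condition. By the recorded equivalence, $g$ extends to an isometry $\tilde g$ of $H^2(X,\Z)$ preserving $H^{2,0}(X)$. I would then verify that $\tilde g$ is in fact a Hodge isometry: being integral, it commutes with complex conjugation, hence preserves $H^{0,2}(X)=\overline{H^{2,0}(X)}$ and consequently the orthogonal complement $H^{1,1}(X)$ as well. Because $g$ preserves both $\PPP_X$ and $N_X$, it sends the ample class $\ampleX$ to an interior point $\ampleX^g\in N_X^{\circ}$, that is, to an ample class, so $\tilde g$ is an \emph{effective} Hodge isometry. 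By the strong Torelli theorem~\cite{Torelli1971} (see also~\cite[Chapter VIII]{BHPV2004}), $\tilde g$ is induced by a unique automorphism $\psi\in\Aut(X)$; restricting to $S_X$ gives $g\in G$.

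The main obstacle is the converse, and specifically the invocation of the strong Torelli theorem. One must ensure that the \emph{integral} extension $\tilde g$ produced by Nikulin's criterion (Proposition~\ref{prop:NikulinExtend}) is a genuine Hodge isometry rather than merely a real isometry fixing the line $\C\omega_X$; this follows from integrality via complex conjugation. More importantly, one must guarantee that $\tilde g$ maps the ample cone to itself so that Torelli yields an honest automorphism and not just an abstract Hodge isometry, and this is precisely the role of the hypothesis $N_X^g=N_X$. A minor bookkeeping point, already absorbed into the statement by restricting to $\OG(S_X,\PPP_X)$, is that automorphisms act trivially on the $\{\pm 1\}$-factor of $\OG(S_X)=\OG(S_X,\PPP_X)\times\{\pm 1\}$, so no orientation ambiguity intervenes.
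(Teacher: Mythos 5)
Your proof is correct and follows the same route the paper takes: the paper derives this theorem precisely by combining Nikulin's extension criterion (Proposition~\ref{prop:NikulinExtend}, giving the equivalence between the period condition and the existence of an integral extension to $H^2(X,\Z)$ preserving $H^{2,0}(X)$) with the strong Torelli theorem, where preservation of $N_X$ supplies the effectivity hypothesis. Your write-up merely makes explicit the details (Hodge isometry via complex conjugation, ample class mapped to ample class) that the paper leaves to the cited references.
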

\begin{example}\label{example:simple_period_condition}
Suppose that $\rank T_X\ge 3$ and that
$\omega_X$ is very general in the period domain $\QQQ$ in  $\P_{*}(T_X\tensor \C)$.
(See~\cite[Chapter VIII]{BHPV2004} for the definition of the period domain.)
Then we have 
\begin{equation}\label{eq:periodTpm1}
\OG(T_X, \omega_X)=\{\pm 1\},
\end{equation}
and hence $g_S\in \OG(S_X)$ satisfies  the period condition
if and only if the action of $g_S$ on the discriminant group $A(S_X)$ is $1$ or $-1$.
\par
We give a proof of~\eqref{eq:periodTpm1}.
The period domain $\QQQ$ is an open subset (in the classical topology)
of a smooth quadratic hypersurface in $\P_{*}(T_X\tensor \C)$,
and hence we have $\dim \QQQ=\rank T_X-2 >0$.
For $\gamma\in \OG(T_X)$,
let $V_{\gamma, \lambda}\subset T_X\tensor \C$ denote the eigenspace of $\gamma$ with eigenvalue $\lambda\in \C$.
If $\gamma\notin \{\pm 1\}$, then $\dim V_{\gamma, \lambda}<\rank T_X$
and hence $\P_*(V_{\gamma, \lambda})\cap \QQQ$ is a proper analytic subspace of $\QQQ$ for any $\lambda$.
Since a countable union of proper analytic subspaces of a positive-dimensional 
connected complex manifold cannot cover the total space,
we have~\eqref{eq:periodTpm1} for  $\omega_X$ very general in $\QQQ$.
\par
Suppose moreover that $-1\in \OG(T_X, \omega_X)$ acts 
on  $A(T_X)$ non-trivially
(that is, the abelian group $A(T_X)\cong A(S_X)$ is not $2$-elementary).
By Proposition~\ref{prop:NikulinExtend}, 
there exists no isometry $g_H$ of the overlattice $H^2(X, \Z)$ of $S_X\oplus T_X$
such that $g_H|S_X=1$ and $g_H|T_X=-1$.
Since $\Aut(X)$ acts on $H^2(X, \Z)$ faithfully,
the natural homomorphism
$\Aut(X)\to \OG(S_X, \PPP_X)$ is injective.
\end{example}
\begin{remark}
For supersingular $K3$ surfaces,
we have to prove~\eqref{eq:periodTpm1} in a different method,
because the period domain is a subvariety of codimension $>1$ in a Grassmannian variety.
See~\cite{KondoShimada2014b}.
\end{remark}
\subsection{Tessellation by $\LS$-chambers}
Let $L_{26}$ denote an even unimodular hyperbolic lattice 
of rank $26$,
which is unique up to isomorphism.
We choose a positive cone $\PPP_{26}$ of $L_{26}$.
A standard fundamental  domain  of 
$W(L_{26})$  
was determined by Conway~\cite{Conway1983}
by means of Vinberg's algorithm~\cite{VinbergBombay}.
\begin{definition}
A vector $\weyl\in L_{26}$ is called a \emph{Weyl vector} if 
$\weyl$ is a non-zero primitive vector of $L_{26}$ contained in
$\bdr\closure{\PPP}_{26}$  
(in particular, we have $\intf{\weyl, \weyl}=0$ and hence 
$\Z \weyl \subset (\Z \weyl )\sperp$) such that 
$(\Z \weyl )\sperp/\Z \weyl  $ is isomorphic to 
the negative-definite Leech lattice. 
\end{definition}
%
%
\begin{definition}
Let $\weyl$ be a Weyl vector.
A $(-2)$-vector $r\in L_{26}$ is said to be a \emph{Leech root} 
with respect to  $\weyl$
if $\intf{\weyl, r}=1$.
We then put 
\[
\ConC (\weyl):=\set{x\in \PPP_{26}}{\intf{x, r}\ge 0\;\;\textrm{for all
Leech roots $r$ with respect to $\weyl$}}.
\]
\end{definition}
\begin{theorem}[Conway~\cite{Conway1983}]
\begin{enumerate}[{\rm (1)}]
\item The mapping $\weyl\mapsto \ConC (\weyl)$ gives
a bijection from the set of Weyl vectors 
to the set of standard fundamental domains of $W(L_{26})$.
\item
Let $\weyl$ be a Weyl vector.
Then the mapping $r\mapsto \ConC (\weyl)\cap (r)\sperp$ gives a bijection from 
the set  
of Leech roots with respect to $\weyl$  to the set of 
 walls of the  chamber $\ConC (\weyl)$. 
 \qed
\end{enumerate}
\end{theorem}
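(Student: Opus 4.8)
The plan is to prove Conway's theorem by invoking Vinberg's algorithm together with the special arithmetic of $L_{26}$ and the Leech lattice. The central idea is that the set of Leech roots with respect to a fixed Weyl vector $\weyl$ already forms a complete set of simple roots for the Weyl group $W(L_{26})$, so that the chamber $\ConC(\weyl)$ they cut out is automatically a standard fundamental domain. First I would establish that, for a Weyl vector $\weyl$, every $(-2)$-vector $r$ with $\intf{\weyl, r}=1$ is a simple root of the reflection group, and that reflections in these simple roots generate all of $W(L_{26})$. The key computation is that for the standard parabolic description coming from $\weyl$, the mirrors $(r)\sperp$ of the Leech roots bound a single connected component of $\PPP_{26}\setminus\bigcup(r)\sperp$, where $r$ runs over all $(-2)$-vectors. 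This is where the isomorphism $(\Z\weyl)\sperp/\Z\weyl \cong \text{(Leech lattice)}$ does the real work: the affine structure on the boundary horosphere through $\weyl$ identifies the Leech roots with the deep holes / lattice points of the Leech lattice, and the absence of roots of norm $-2$ in the Leech lattice forces the Coxeter diagram to have the correct shape.

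For part (1), the strategy is to show both surjectivity and injectivity of $\weyl\mapsto \ConC(\weyl)$. Surjectivity follows because $W(L_{26})$ acts transitively on the set of standard fundamental domains (by general Coxeter-group theory, since $N_X$-type chambers are permuted simply transitively by the Weyl group), and $W(L_{26})$ correspondingly acts transitively on Weyl vectors; thus every chamber is $\ConC(\weyl)$ for some $\weyl$. For injectivity, I would argue that the Weyl vector is recovered intrinsically from the chamber as the unique primitive isotropic vector in $\bdr\closure{\PPP}_{26}$ whose associated horoball structure matches the wall configuration; concretely, $\weyl$ is determined by the condition that the walls of $\ConC(\weyl)$ are exactly the hyperplanes $(r)\sperp$ with $\intf{\weyl, r}=1$, and two distinct Weyl vectors give combinatorially distinguishable (hence distinct) chambers. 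I would lean on Vinberg's algorithm here: run with the controlling vector $\weyl$ itself, at each stage the algorithm selects the $(-2)$-vectors $r$ minimizing $\intf{\weyl, r}$, and one checks that the very first batch---those with $\intf{\weyl, r}=1$---already yields a complete set of defining walls because the resulting diagram, read off from the Leech lattice, has no further obtuse roots to adjoin.

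For part (2), once part (1) gives that $\ConC(\weyl)$ is a standard fundamental domain, the walls of $\ConC(\weyl)$ are by definition in bijection with the defining $(-2)$-vectors (up to positive scalar, but $(-2)$-vectors are primitive so the scalar is fixed), and these defining vectors are precisely the Leech roots. The content is the claim that \emph{every} wall of $\ConC(\weyl)$ is defined by some Leech root and conversely every Leech root defines a genuine wall (rather than a redundant inequality). That no Leech root is redundant is exactly the statement that the corresponding inequality is not implied by the others, which again reduces to the geometry of the Leech lattice: each lattice point of Leech gives a genuinely distinct deep hole, hence a genuine facet.

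The hard part will be the computation underlying part (1): verifying that the Leech roots alone suffice to define the fundamental domain, i.e.\ that Vinberg's algorithm terminates after the first round. This is the arithmetic heart of Conway's theorem and rests on the classification of the so-called ``holy constructions'' of the Leech lattice and the fact that the Leech lattice has no roots (no vectors of norm $\pm 2$ becoming norm $-2$ in $L_{26}$ adjacent to $\weyl$ in the wrong way). Since this is precisely the deep result proved by Conway~\cite{Conway1983}, I would not reprove it from scratch but rather cite it; the role of the statement here is to record the bijections for later use in Borcherds' method, so the proof is a $\qed$ referring to~\cite{Conway1983}.
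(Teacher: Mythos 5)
Your proposal ends exactly where the paper does: the theorem is stated as Conway's result and the ``proof'' is the citation to~\cite{Conway1983}, so you and the paper take the same route. One caution about your surrounding sketch, should you ever expand it: the termination of Vinberg's algorithm after the first batch rests on the Conway--Parker--Sloane theorem that the Leech lattice has covering radius $\sqrt{2}$ (deep holes are points of the ambient space at maximal distance from lattice points, not images of lattice points), rather than merely on the absence of roots in the Leech lattice --- but since you defer to Conway for precisely this step, nothing in your conclusion is affected.
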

\begin{definition}
We call 
a standard fundamental domain
of $W(L_{26})$   
a \emph{Conway chamber}.
Hence $\PPP_{26}$ is tessellated
by the Conway chambers.
\end{definition}
Suppose that we have a primitive embedding
\[
\iota\colon S_X\inj L_{26}.
\]
Replacing $\iota$ by $-\iota$ if necessary,
we assume that $\iota$ maps $\PPP_X$ into $\PPP_{26}$, and 
regard   $\PPP_X$
as a subspace of $\PPP_{26}$:
\[
\PPP_X=\iota\inv(\PPP_{26})= (S_X\tensor \R)\cap \PPP_{26}.
\]
\begin{definition}
An  \emph{$L_{26}/S_X$-chamber} 
is a chamber $D$ of $\PPP_X$ that is obtained as the intersection $\PPP_X\cap \ConC (\weyl)$ of
$\PPP_X$ with a Conway chamber $\ConC (\weyl)$.
\end{definition}
The tessellation of $\PPP_{26}$ by the Conway chambers
induces a tessellation of $\PPP_X$ by the $L_{26}/S_X$-chambers.
By definition, the nef-and-big cone $N_X$,
which is a standard fundamental domain of $W(S_X)$,
 is tessellated by $L_{26}/S_X$-chambers.
 In other words, 
 the tessellation of $\PPP_X$ by the $L_{26}/S_X$-chambers is a refinement of 
 the tessellation 
 by the standard fundamental domains of $W(S_X)$.
 \begin{definition}\label{def:thegraphVE}
We define a graph $(V, E)$  by the following.
\begin{itemize}
\item The set $V$ of vertices is the set of $L_{26}/S_X$-chambers contained in $N_X$.
\item The set $E$ of edges is the set of pairs of adjacent $L_{26}/S_X$-chambers.
\end{itemize}
 \end{definition}
Let $G$ be the image of 
the natural homomorphism $\Aut(X)\to \OG(S_X, \PPP_X)$.
Suppose that  
 \begin{equation}\label{cond:period}
 \parbox{10cm}{
the period condition for $g\in \OG(S_X)$ 
is that the action of $g$ on the discriminant group $A(S_X)$ be $1$ or $-1$.}
 \end{equation}
See Example~\ref{example:simple_period_condition}
for a case where this assumption is satisfied.
Then,
by Proposition~\ref{prop:NikulinExtend},
every element $g\in G$ extends to an isometry of $L_{26}$.
In particular,
the action of $G$ preserves the tessellation of $\PPP_X$ by the $\LS$-chambers.
Since the action of $G$ preserves $N_X$, we obtain the following:
\begin{proposition}\label{eq:GoactsonVE}
If~\eqref{cond:period} holds, then  
 $G$ acts on the graph $(V, E)$.
 \qed
\end{proposition}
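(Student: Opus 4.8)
The plan is to verify that $G$ satisfies the two hypotheses needed for its action on $\PPP_X$ to descend to a well-defined action on the graph $(V, E)$: first that $G$ preserves the tessellation of $\PPP_X$ by $\LS$-chambers (so that the action permutes the vertices of $V$), and second that $G$ preserves $N_X$ (so that it permutes precisely those $\LS$-chambers contained in $N_X$). Since the graph structure is determined entirely by the $\LS$-chambers and their adjacency relation, and since isometries automatically preserve adjacency, these two facts together yield the desired action.

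First I would recall that every $g \in G$ is an element of $\OG(S_X, \PPP_X)$, and invoke hypothesis~\eqref{cond:period}: under this assumption the period condition simply says that $g$ acts on $A(S_X)$ as $\pm 1$. Every element of $G$ arises from $\Aut(X)$ and hence satisfies the period condition by Theorem~\ref{thm:Torelli}, so its action on $A(S_X) = A(S_X)$ is $\pm 1$. Applying Proposition~\ref{prop:NikulinExtend} to the primitive embedding $\iota\colon S_X \inj L_{26}$, the isometry $g$ then extends to an isometry $\tilde{g}$ of $L_{26}$ (the relevant condition on the discriminant form $q(S_X)$ is met precisely because the action on $A(S_X)$ is $\pm 1$, which lies in the image $q(\OG(N))$ for the orthogonal complement $N$ of $S_X$ in $L_{26}$).

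Next I would use the extension $\tilde{g}$ to show $G$ preserves the $\LS$-tessellation. Since $\tilde{g} \in \OG(L_{26})$, it permutes the Conway chambers, because the tessellation of $\PPP_{26}$ by standard fundamental domains of $W(L_{26})$ is preserved by any isometry in $\OG(L_{26}, \PPP_{26})$ (replacing $\tilde g$ by its composite with $-1$ if it reverses the positive cone). An $\LS$-chamber is by definition an intersection $\PPP_X \cap \ConC(\weyl)$; since $\tilde{g}$ restricts to $g$ on $S_X \tensor \R = \PPP_X$ and carries $\ConC(\weyl)$ to another Conway chamber $\ConC(\weyl^{\tilde g})$, it sends $\PPP_X \cap \ConC(\weyl)$ to $\PPP_X \cap \ConC(\weyl^{\tilde g})$, which is again an $\LS$-chamber. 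Thus $g$ permutes the $\LS$-chambers.

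Finally, since $g \in G$ preserves $N_X$ by Theorem~\ref{thm:Torelli}, and the vertex set $V$ consists exactly of the $\LS$-chambers contained in $N_X$, the permutation of $\LS$-chambers induced by $g$ restricts to a permutation of $V$. Because $g$ is an isometry, it preserves the adjacency relation between chambers (two chambers share a wall if and only if their images do), so it maps edges to edges. This gives a well-defined action of $G$ on $(V, E)$, completing the proof. The one point requiring genuine care is the first step: confirming that hypothesis~\eqref{cond:period} is exactly the condition under which Proposition~\ref{prop:NikulinExtend} guarantees an extension to $L_{26}$, since the extension is what links the intrinsic geometry of $S_X$ to the ambient Conway tessellation; everything afterward is formal manipulation of the tessellation and the defining property of $V$.
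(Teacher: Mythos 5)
Your proposal is correct and follows essentially the same route as the paper: use hypothesis~\eqref{cond:period} together with Proposition~\ref{prop:NikulinExtend} to extend each $g\in G$ to an isometry of $L_{26}$, conclude that $G$ preserves the tessellation of $\PPP_X$ by $\LS$-chambers, and combine this with the fact that $G$ preserves $N_X$ to get the action on $(V,E)$. The only superfluous point is your parenthetical about composing the extension with $-1$: this case never arises, since the extension restricts to $g$ on $\PPP_X\subset\PPP_{26}$ and therefore automatically preserves $\PPP_{26}$.
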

\begin{definition}\label{def:primitive}
Let $D=\PPP_X \cap \ConC (\weyl)$
be an $L_{26}/S_X$-chamber.
For each wall $w$ of $D$,
there exists a unique defining  vector $v$ of $w$  in the dual lattice $S_X\dual$
that is primitive in $S_X\dual$.
We call this vector $v\in S_X\dual$ the \emph{primitive defining vector} of the wall $w$.
\end{definition}
Note that a Conway chamber has infinitely many walls.
For the graph $(V, E)$ to have local effectiveness properties in~\cite{BrandhorstShimada2021},
it needs that each $\LS$-chamber has only a finite number of walls.
 We consider the following assumption:
 \begin{equation}\label{cond:atleastoneroot}
 \parbox{10cm}{
The orthogonal complement of $S_X$ in $L_{26}$ cannot be embedded in the negative-definite Leech lattice.}
 \end{equation}
This holds, for example, if the orthogonal complement
contains at least one  $(-2)$-vector.
\begin{proposition}[\cite{Shimada2015}]\label{prop:finitewalls}
Suppose that~\eqref{cond:atleastoneroot} holds.
Then each $L_{26}/S_X$-chamber has only a finite number of walls.
If $D=\PPP_X \cap \ConC (\weyl)$ is an  $L_{26}/S_X$-chamber
obtained by the Conway chamber $\ConC (\weyl)$
associated with a Weyl vector $\weyl$,
then we can calculate the primitive defining vectors of walls of $D$ from $\weyl$.
Moreover, for each wall $w$ of $D$,
we can calculate a Weyl vector $\weyl\sprime$
such that $D\sprime=\PPP_X \cap \ConC (\weyl\sprime)$ is the $L_{26}/S_X$-chamber
adjacent to $D$ across the wall $w$.
\qed
\end{proposition}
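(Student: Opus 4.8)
The plan is to prove the three assertions in turn. Write $R$ for the orthogonal complement of $S_X$ in $L_{26}$, which is negative-definite of rank $26-\rank S_X$, and let $\mathrm{pr}_S\colon L_{26}\tensor\Q\to S_X\tensor\Q$ and $\mathrm{pr}_R\colon L_{26}\tensor\Q\to R\tensor\Q$ be the orthogonal projections. Since $\intf{x, r}=\intf{x, \mathrm{pr}_S(r)}$ for $x\in S_X\tensor\R$, the chamber $D=\PPP_X\cap \ConC(\weyl)$ is cut out inside $\PPP_X$ by the inequalities $\intf{x, \mathrm{pr}_S(r)}\ge 0$, where $r$ ranges over the Leech roots with respect to $\weyl$, and only an $r$ with $\intf{\mathrm{pr}_S(r), \mathrm{pr}_S(r)}<0$, equivalently $\intf{\mathrm{pr}_R(r), \mathrm{pr}_R(r)}>-2$, can contribute a wall.

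First I would prove finiteness, and here~\eqref{cond:atleastoneroot} enters in an essential way. The reference vector is $a_0:=\mathrm{pr}_S(\weyl)\in S_X\tensor\R$, whose norm is $\intf{a_0, a_0}=-\intf{\mathrm{pr}_R(\weyl), \mathrm{pr}_R(\weyl)}\ge 0$, with equality precisely when $\mathrm{pr}_R(\weyl)=0$, that is, when $\weyl\in S_X$. In that degenerate case $R$ is contained in $\weyl\sperp$ and meets $\Z\weyl$ trivially, so it embeds isometrically into the Leech lattice $(\Z\weyl)\sperp/\Z\weyl$, contradicting~\eqref{cond:atleastoneroot}; thus $\intf{a_0, a_0}>0$, and $a_0\in\PPP_X$ because $\intf{a, a_0}=\intf{a, \weyl}>0$ for an ample class $a$. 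For a wall-defining Leech root $r$ one computes $\intf{a_0, \mathrm{pr}_S(r)}=\intf{\weyl, \mathrm{pr}_S(r)}=1-\intf{\mathrm{pr}_R(\weyl), \mathrm{pr}_R(r)}$, and the Cauchy--Schwarz inequality in the negative-definite space $R\tensor\R$, together with $-\intf{\mathrm{pr}_R(r), \mathrm{pr}_R(r)}<2$, bounds $|\intf{a_0, \mathrm{pr}_S(r)}|$ by a constant $B$ depending only on $\weyl$. Hence every primitive defining vector $v\in S_X\dual$ of a wall satisfies $-2\le\intf{v, v}<0$ and $|\intf{a_0, v}|\le B$; decomposing $v=t\,a_0+v\sprime$ with $v\sprime\perp a_0$ shows that $t$ and $\intf{v\sprime, v\sprime}$ are bounded, so $v$ lies in a bounded region of the discrete set $S_X\dual$ and there are only finitely many. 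This proves the first assertion and at the same time gives an effective enumeration of the finitely many candidate vectors; selecting those that actually bound $D$ (testing whether $(v)\sperp\cap D$ is a facet by the methods of Section~\ref{sec:SX}) and primitivizing in $S_X\dual$ yields the primitive defining vectors, which is the second assertion.

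For the third assertion I would lift the picture back to $\PPP_{26}$. Fix a wall $w=D\cap(v)\sperp$ and follow a short geodesic in $\PPP_{26}$ from the interior of $D$ to the interior of the adjacent chamber $D\sprime$, crossing $(v)\sperp$ exactly once; this path meets only those walls $(r)\sperp$ of Conway chambers whose Leech root $r$ satisfies $\mathrm{pr}_S(r)\in\Q_{>0}\,v$, and there are finitely many such $r$, all already produced above. Crossing the wall $(r)\sperp$ carries a Conway chamber $\ConC(\weyl\spprime)$ to $\ConC(s_r(\weyl\spprime))$, so composing the reflections $s_r$ in the order in which the geodesic meets them gives an isometry sending $\ConC(\weyl)$ to the Conway chamber on the far side; its image $\weyl\sprime$ of $\weyl$ is then a Weyl vector with $D\sprime=\PPP_X\cap\ConC(\weyl\sprime)$, as required.

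The main obstacle is this last step rather than the finiteness. Once~\eqref{cond:atleastoneroot} supplies the positive reference vector $a_0$, the required bound is a routine Cauchy--Schwarz estimate. The difficulty is that the wall $w$ need not be a wall of the single Conway chamber $\ConC(\weyl)$: several hyperplanes $(r)\sperp$ with $\mathrm{pr}_S(r)\parallel v$ may all restrict to $(v)\sperp$, so that $\ConC(\weyl)$ and $\ConC(\weyl\sprime)$ are separated by intermediate Conway chambers. Determining the correct order of the reflections $s_r$ --- equivalently, sorting the projections $\mathrm{pr}_R(r)\in R\dual$ by the height at which the geodesic meets the corresponding hyperplanes --- is the delicate point, and I would follow the explicit recipe of~\cite{Shimada2015}.
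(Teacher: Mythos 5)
First, note that the paper itself gives no argument for this proposition: it is stated with a citation to~\cite{Shimada2015} and closed immediately, so your reconstruction can only be measured against the method of that reference. Your first part is correct and is essentially that method: under~\eqref{cond:atleastoneroot} the projection $a_0=\mathrm{pr}_S(\weyl)$ must have positive norm (otherwise $\weyl\in S_X$ by primitivity of $S_X$ in $L_{26}$, and $R$ would embed isometrically into $(\Z \weyl)\sperp/\Z \weyl$, i.e.\ into the Leech lattice), and the Cauchy--Schwarz bound then traps all wall vectors in a compact subset of the discrete set $S_X\dual$. Your second part, however, is circular as written: the finite candidate set you enumerate is a proper superset of the set of projections $\mathrm{pr}_S(r)$ of Leech roots, and ``testing whether $(v)\sperp\cap D$ is a facet'' presupposes an inequality description of $D$ --- which is exactly what is being computed; adding a spurious candidate as a constraint would cut out a strictly smaller polyhedron. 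The missing (and easily supplied) step is to decide, for each candidate $u$ in the bounded region, whether it lifts to a Leech root, i.e.\ whether there exists $\rho\in R\dual$ with $u+\rho\in L_{26}$, $\intf{\rho,\rho}=-2-\intf{u,u}$ and $\intf{\weyl, u+\rho}=1$; this is a finite search in the negative-definite lattice $R$, and only after it can redundant inequalities be discarded.

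The genuine gap is the third assertion, and it is not of a kind that ``sorting the reflections correctly'' can close. Both endpoints of your geodesic lie in $\PPP_X$, and $\PPP_X$ is a linear section of $\PPP_{26}$, so the geodesic lies entirely inside $\PPP_X$. Consequently every hyperplane $(r)\sperp$ with $\mathrm{pr}_S(r)\in\Q_{>0}\,v$ is crossed at one and the same instant --- the moment the path crosses $(v)\sperp$ --- because all these hyperplanes meet $\PPP_X$ in the same hyperplane $(v)\sperp\cap\PPP_X$. There is no ``order in which the geodesic meets them'', and no ordering is singled out by your construction; worse, the path lies inside $(r)\sperp$ for every $r\in\Roots(R)$, so it never enters the interior of any Conway chamber at all. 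Since producing $\weyl\sprime$ is the computational heart of Borcherds' method, deferring this point to~\cite{Shimada2015} leaves the proposition unproved. The correct procedure avoids any ordering by walking toward a target point: choose a rational point $y$ in the relative interior of $D\sprime$, just beyond a generic point of the wall $w$; set $\weyl_0:=\weyl$; while there exists a Leech root $r$ with respect to the current vector $\weyl_i$ such that $\intf{y, r}<0$ (such roots are computable by the separating-root algorithms already quoted in the paper), replace $\weyl_{i+1}:=\weyl_i+\intf{\weyl_i, r}\,r$. Each step strictly decreases the finite number of reflection hyperplanes of $W(L_{26})$ separating $y$ from the current Conway chamber, so the loop terminates with $y\in\ConC(\weyl_N)$; then $\PPP_X\cap\ConC(\weyl_N)$ is an $\LS$-chamber containing an interior point of $D\sprime$ and therefore equals $D\sprime$, even though $y$ itself lies on the boundary of many Conway chambers. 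This walking argument is the recipe of~\cite{Shimada2015}, and it is what your sketch would need in order to become a proof.
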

Thus, under assumptions~\eqref{cond:period}~and~\eqref{cond:atleastoneroot},
the local effectiveness properties in~\cite{BrandhorstShimada2021}
hold for $(V, E)$ and $G$, and 
we can apply the algorithm in~\cite[Section 4.1]{BrandhorstShimada2021}  
to $(V, E)$ and $G$.
%
%
\begin{remark}\label{rem:V0}
The amount of the computation of this method is estimated by
$|V_0|=|V/G|$,
that is, the number of the orbits of the action of $\Aut(X)$ on the set of $L_{26}/S_X$-chambers
contained in $N_X$.
\par
In practice, it seems that Borcherds' method 
carried out without using computer (for example,~\cite{Kondo1998})
can only deal with the case where $|V_0|=1$.
Some  cases with $|V_0|>1$
were treated in~\cite{Shimada2015},
where $V_0$ is of size about $10^3\sim 10^4$.
However, the geometric description of the generators of $\Aut(X)$ was not given
for these cases.
We also have observed 
some cases where $|V_0|$ is too large for  Borcherds' method to terminate in a reasonable time
(for example,~\cite{KKS2014}).
\par
In the case of the present article (see Section~\ref{sec:AutXfg}),
we have $|V_0|=7$.
Since this is not so large,
we have managed to obtain geometric generators.
\end{remark}
\begin{remark}
It has been \emph{empirically} observed that $|V_0|$ is small when the orthogonal complement of $\iota\colon S_X\inj L_{26}$ contains a root lattice
as a  sublattice of finite index.
\end{remark}
\section{Computation of $\Aut(\Xfg)$}\label{sec:AutXfg}
In this section, we prove Theorems~\ref{thm:genssimple}~and~\ref{thm:rats}.
For simplicity, 
we write $X$ for 
the $K3$ surface $\Xfg$.
Recall that 
the polynomials $f$ and $g$ 
in the defining equation~\eqref{eq:wfg} of $\barX_{f, g}$ are assumed to be very general.  
We use this assumption throughout this section.
\subsection{The lattice $S_X$}
First, we describe the lattice $S_X$ and the nef-and-big cone $N_X$.
Let $H\subset X$ denote the pull-back of a line of $\P^2$,
and we put
\[
\theh:=[H]\in S_X.
\]
The singular locus  
of the branch curve $B(\theh)=\{f^2+g^3=0\}\subset \P^2$ of the finite double covering $\barX_{f,g}\to \P^2$ 
consists of six ordinary cusps $\bar{p}_1, \dots, \bar{p}_6$, which are located at 
the locus defined by $f=g=0$.
Hence the singularities  
of $\barX_{f,g}$ consist of six rational double points $p_1, \dots, p_6$ of type $A_2$,
where $p_i$ is located over $\bar{p}_i$.
Let $E_i\spar{+}$ and $E_i\spar{-}$ denote the exceptional curves that are 
contracted to the point $p_i\in \Sing(\barX_{f,g})$
by the desingularization $X\to \barX_{f,g}$.
We put
\[
\ve_i\spar{+}:=[E_i\spar{+}]\;\in\; S_X, \quad \ve_i\spar{-}:=[E_i\spar{-}] \;\in\; S_X.
\]
Let $\barGamma\subset \P^2$ be the conic defined by $g=0$.
Then $\barGamma$ passes through the six cusps $\bar{p}_1, \dots, \bar{p}_6$ of $B(\theh)$.
Hence the strict transform of $\barGamma$ in $X$ is a disjoint union of two smooth rational curves 
$\Gamma\spar{+}$ and $\Gamma\spar{-}$.
We put
\[
\vgamma\spar{+}:=[\Gamma\spar{+}]\;\in\; S_X, \quad \vgamma\spar{-}:=[\Gamma\spar{-}] \;\in\; S_X.
\]
For each $i\in \{1,\dots, 6\}$,
the curve $\Gamma\spar{+}$ intersects one  of $E_i\spar{+}$ or $E_i\spar{-}$
and is disjoint from the other.
Interchanging $E_i\spar{+}$ and $E_i\spar{-}$ if necessary,
we can assume that 
\[
\intf{\vgamma\spar{+}, \ve_i\spar{+}}=1, \quad 
\intf{\vgamma\spar{+}, \ve_i\spar{-}}=0
\]
holds for $i=1, \dots, 6$.
Then we have the following. (See also~\cite{Shimada2010}.)
\begin{proposition}[Degtyarev~\cite{Degtyarev2008}]\label{prop:SX}
The $\Q$-vector space $S_X\tensor\Q$ is of dimension $13$, and is generated by the classes
\begin{equation}\label{eq:bone}
\theh,\;\;  \ve_1\spar{+},  \ve_1\spar{-}, \quad\dots\quad , \ve_6\spar{+},  \ve_6\spar{-}.
\end{equation}
The sublattice $S_{X, 0}$ of $S_X$ generated by the classes in~\eqref{eq:bone} 
is of index $3$ in $S_X$.
The lattice $S_X$ is generated by $S_{X, 0}$ and the class $\vgamma\spar{+}$.
\qed
\end{proposition}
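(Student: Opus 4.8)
The plan is to split the statement into an explicit lattice computation for $S_{X,0}$ together with the single glue vector $\vgamma\spar{+}$, and a transcendental (period) input that fixes both the rank and the exact index. First I would assemble the Gram matrix of the $13$ classes in~\eqref{eq:bone}. Writing $\pi\colon X\to\P^2$ for the double covering and $L$ for a line, the projection formula gives $\intf{\theh,\theh}=2$ and $\intf{\theh,\ve_i\spar{+}}=\intf{\theh,\ve_i\spar{-}}=0$, while the curves $E_i\spar{+},E_i\spar{-}$ form the $A_2$-configuration over the rational double point $p_i$, so $\intf{\ve_i\spar{+},\ve_i\spar{-}}=1$, $\intf{\ve_i\spar{+},\ve_i\spar{+}}=\intf{\ve_i\spar{-},\ve_i\spar{-}}=-2$, and all pairings between classes attached to distinct cusps vanish. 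Thus $S_{X,0}\cong\langle 2\rangle\oplus A_2^{\oplus 6}$ is an orthogonal direct sum with $\det S_{X,0}=2\cdot 3^6\ne 0$ and discriminant group $A(S_{X,0})\cong\Z/2\Z\oplus(\Z/3\Z)^6$. In particular the $13$ classes are $\Q$-independent, so $\rank S_X\ge 13$.

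The reverse bound $\rank S_X=13$ is the first transcendental input, and it is where the \emph{very general} hypothesis is used. I would match a parameter count with the dimension of the period domain. The surfaces $\Xfg$ are parametrized by $H^0(\P^2,\OOO(3))\times H^0(\P^2,\OOO(2))$, of dimension $10+6=16$; quotienting by the one-dimensional scaling $(f,g)\mapsto(t^3f,t^2g)$ and by $\mathrm{PGL}_3$ leaves a $16-1-8=7$-dimensional family. On the other hand $T_X$ has rank $22-13=9$ and signature $(2,7)$, so its period domain has dimension $9-2=7$. Because the classes in~\eqref{eq:bone} are algebraic on every member, the period $\omega_X$ always lies in the period domain attached to the orthogonal complement $T_1$ of $S_{X,0}$ in $H^2(X,\Z)$; local Torelli for $K3$ surfaces, together with a check that this $7$-parameter family varies maximally, shows that the period map has dense image in that $7$-dimensional domain. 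Hence for very general $(f,g)$ one has $T_X=T_1$ and $\rank S_X=13$, and, combined with the independence above, the classes in~\eqref{eq:bone} span $S_X\tensor\Q$.

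Next I would produce the glue. Since $\Gamma\spar{+}$ maps isomorphically onto the conic $\barGamma$, the projection formula gives $\intf{\vgamma\spar{+},\theh}=2$ and $\intf{\vgamma\spar{+},\vgamma\spar{+}}=-2$, while the chosen labelling gives $\intf{\vgamma\spar{+},\ve_i\spar{+}}=1$, $\intf{\vgamma\spar{+},\ve_i\spar{-}}=0$; the same analysis for $\Gamma\spar{-}$ yields $\intf{\vgamma\spar{-},\ve_i\spar{-}}=1$, $\intf{\vgamma\spar{-},\ve_i\spar{+}}=0$, and $\intf{\vgamma\spar{+},\vgamma\spar{-}}=0$ because the two strict transforms are disjoint. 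Comparing pairings against the spanning set~\eqref{eq:bone} and using nondegeneracy, one checks the integral relation
\[
2\theh=\vgamma\spar{+}+\vgamma\spar{-}+\sum_{i=1}^{6}\bigl(\ve_i\spar{+}+\ve_i\spar{-}\bigr),
\]
so in particular $\vgamma\spar{-}\equiv-\vgamma\spar{+}\pmod{S_{X,0}}$. All the pairings above are integers, so $\vgamma\spar{+}\in S_{X,0}\dual$, and its class in $A(S_{X,0})=S_{X,0}\dual/S_{X,0}$ is read off directly: the $\langle 2\rangle$-component is trivial (as $\intf{\vgamma\spar{+},\theh}=2=\intf{\theh,\theh}$), while in each of the six $\Z/3\Z$-summands the functional $\ve_i\spar{+}\mapsto 1,\ \ve_i\spar{-}\mapsto 0$ is a generator. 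Hence $\vgamma\spar{+}$ has order exactly $3$ modulo $S_{X,0}$, so $S_1:=\langle S_{X,0},\vgamma\spar{+}\rangle$ satisfies $S_1\subseteq S_X$ and $[S_1:S_{X,0}]=3$; this already gives $[S_X:S_{X,0}]\ge 3$.

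The hard part is to show the index is exactly $3$, equivalently $S_X=S_1$. Here lattice theory alone does not suffice: the $3$-part $(\Z/3\Z)^6$ of $A(S_{X,0})$, carrying the form $\bigoplus(-2/3)$, contains rank-$2$ isotropic subgroups, so $S_{X,0}$ admits even overlattices of index $9$ as well. The transcendental step settles which one actually occurs. By the density established above, for very general $(f,g)$ the period is general in the period domain of $T_1$, so $T_X=T_1$ and $S_X$ is the saturation of $S_{X,0}$ in $H^2(X,\Z)$; by Nikulin's theory $[S_X:S_{X,0}]^2=|A(S_{X,0})|/|A(T_1)|$. Computing the primitive embedding $S_{X,0}\inj H^2(X,\Z)$ gives $|A(T_1)|=2\cdot 3^4$, whence $[S_X:S_{X,0}]=3$; together with the lower bound from $\vgamma\spar{+}$ this forces $S_X=S_1=\langle S_{X,0},\vgamma\spar{+}\rangle$. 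The genuinely nontrivial inputs are thus the density of the period map and the computation of $T_1$; the rest is explicit bilinear-form bookkeeping over $\Z$.
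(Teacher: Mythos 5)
Your lattice bookkeeping is essentially all correct: $S_{X,0}\cong\langle 2\rangle\oplus A_2^{\oplus 6}$ with $A(S_{X,0})\cong\Z/2\Z\oplus(\Z/3\Z)^6$, the relation $2\theh=\vgamma\spar{+}+\vgamma\spar{-}+\sum_{i}(\ve_i\spar{+}+\ve_i\spar{-})$, the fact that $\vgamma\spar{+}$ has order exactly $3$ in $A(S_{X,0})$ (trivial $\Z/2\Z$-component, a generator in each $\Z/3\Z$-summand), hence $[S_X:S_{X,0}]\ge 3$; and you are right that lattice theory alone stops there, since the $3$-part of $A(S_{X,0})$ does contain isotropic subgroups of order $9$. (Note that the paper itself gives no proof: Proposition~\ref{prop:SX} is quoted from Degtyarev~\cite{Degtyarev2008}, so the comparison here is against what a complete argument would require.) The gap is in your final step. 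You fix the index via $[S_X:S_{X,0}]^2=|A(S_{X,0})|/|A(T_1)|$ and then assert that ``computing the primitive embedding $S_{X,0}\inj H^2(X,\Z)$'' gives $|A(T_1)|=2\cdot 3^4$. But $S_{X,0}$ is precisely \emph{not} primitively embedded, and $A(T_1)\cong A(\widetilde S)$, where $\widetilde S$ is the saturation of $S_{X,0}$ in $H^2(X,\Z)$; so computing $|A(T_1)|$ is literally the same problem as computing $[\widetilde S:S_{X,0}]$, which is the statement to be proved. No independent computation of $T_1$ is offered, so this step is circular.

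Worse, the transcendental input you invoke cannot, even in principle, settle this point. Any class $v$ with $3v\in S_{X,0}$ already lies in $S_{X,0}\tensor\Q$, hence is automatically of type $(1,1)$; whether $v$ lies in $H^2(X,\Z)$ is a property of the topological embedding $S_{X,0}\subset H^2(X,\Z)$, which is locally constant as $(f,g)$ moves in the connected family of six-cuspidal torus-type sextics. Thus $[\widetilde S:S_{X,0}]$ is the same for \emph{every} member of the family, and the very-generality/period-density argument — which correctly yields $\rank S_X=13$ and $S_X=\widetilde S$ — carries no information about whether that constant index is $3$ or $9$. Ruling out index $9$ needs genuinely geometric or topological input, for instance: Zariski's superabundance computation (the six cusps lie on exactly one conic, so the Alexander polynomial of the sextic is $t^2-t+1$ rather than $(t^2-t+1)^2$, while an index-$9$ saturation would force the square), which is the substance of the cited proof of Degtyarev; or, alternatively, an explicit determination of $\widetilde S$ for a single special (e.g.\ highly symmetric, maximal Picard rank) member, legitimate precisely because the saturation is deformation invariant. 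A secondary, more forgivable gap: your rank-$13$ step rests on ``a check that this $7$-parameter family varies maximally,'' which is exactly the nontrivial input there (e.g.\ that $(X,\theh)$ recovers the sextic up to projective equivalence, so the period map is generically finite); it should be supplied, but it is standard — the index computation is the real missing idea.
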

By Proposition~\ref{prop:SX}, 
a vector $v$ of  $S_X\tensor \Q$ is uniquely determined  by the list of intersection numbers 
\[
\intf{v, \theh}, \; 
\intf{v,  \ve_1\spar{+}},
\;  \intf{v,  \ve_1\spar{-}}, \dots\,  \intf{v, \ve_6\spar{+}}, \; \intf{v, \ve_6\spar{-}}.
\]
Moreover,
an isometry $g$ of $S_X$ is specified by the images of the classes in~\eqref{eq:bone} by $g$.
For example, 
the involution $ \invol(\theh)$ associated with 
the double covering $\pi(\theh)\colon X\to \P^2$ defined by $|\theh|$ is 
 given by
\[
\theh^{\invol (\theh)}=\theh, \quad  (\ve_i\spar{+})^{\invol (\theh)}= \ve_i\spar{-},  \;  (\ve_i\spar{-})^{\invol(\theh)}= \ve_i\spar{+} \quad (i=1, \dots, 6).
\]
The vector $\ampleX \in S_X\tensor \Q$ defined by   
\begin{equation}\label{eq:theample}
\intf{\ampleX , \theh}=8, \;\;\;  \intf{\ampleX ,  \ve_i\spar{+}}=1,\; \intf{\ampleX ,  \ve_i\spar{-}}=1
\quad (i=1, \dots, 6)
\end{equation}
is a vector of $\PPP_X\cap S_X$, and satisfies 
\[
\intf{\ampleX , \ampleX }=20, \quad \Roots( \,\ampleX \sperp\cap S_X\,)=\emptyset, \quad \Sep(\theh, \ampleX )=\emptyset.
\]
Hence $\ampleX $ is  ample  (see~Section~\ref{subsec:findanample}).
By this ample class $\ampleX$, we can specify the nef-and-big cone $N_X$ in $\PPP_X$.
\par
Next, we investigate the period condition of $X$.
We consider the moduli space $\MMM$ of lattice-polarized $K3$ surfaces
$(X\sprime, \eta\sprime)$,
where $X\sprime$ is a $K3$ surface and $\eta\sprime$ is an isometry $H^2(X, \Z)\cong H^2(X\sprime, \Z)$
that induces an embedding $S_X \inj S_{X\sprime}$.
Then $\MMM$ is covered by the period domain $\QQQ\subset \P_*(T_X\tensor\C)$.
If $(X\sprime, \eta\sprime)$ is very general in $\MMM$, 
then we have $S_X=S_{X\sprime}$.
Looking at the lattice $S_X=S_{X\sprime}$,
we obtain the following:
\begin{proposition}[Degtyarev~\cite{Degtyarev2008}]\label{prop:fsprimegsprime}
If $(X\sprime, \eta\sprime)$ is very general in $\MMM$, then there exist homogeneous polynomials $f\sprime$ and $g\sprime$  
of degree $3$ and $2$, respectively, such that
$X\sprime$ is birational to the double plane defined by $w^2=f^{\prime 2} +g^{\prime 3}$.
\qed
\end{proposition}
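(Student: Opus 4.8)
The plan is to use the very-generality hypothesis to reduce everything to lattice theory on the single lattice $S_X$, and then to transport the geometry visible on $X$ to $X\sprime$ by one isometry. First I would observe that if $(X\sprime,\eta\sprime)$ is very general in $\MMM$, then its period is very general in $\QQQ$, so the Picard number of $X\sprime$ equals $\rank S_X=13$ and no extra divisor classes appear; through the marking $\eta\sprime$ this gives an identification $S_{X\sprime}=S_X$ respecting the positive cone $\PPP_X$. Thus $X$ and $X\sprime$ share the same hyperbolic lattice, the same positive cone $\PPP_X$, and the same Weyl group $W(S_X)$.

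Next I would transport the distinguished degree-$2$ class. Since both $N_X$ and the nef-and-big cone $N_{X\sprime}$ are standard fundamental domains of $W(S_X)$ and $W(S_X)$ acts simply transitively on these, there is an isometry $\psi\in W(S_X)$ with $N_{X\sprime}=N_X^{\,\psi}$. Put $h\sprime:=\theh^{\,\psi}$. Because $\theh$ is nef on $X$ it lies in $\overline{N_X}$, hence $h\sprime\in\overline{N_{X\sprime}}$; as $\intf{h\sprime,h\sprime}=2>0$ this means $h\sprime\in N_{X\sprime}\cap S_{X\sprime}$ is a nef class of square $2$. By Theorem~\ref{thm:NX}, $\Rats(X\sprime)$ is precisely the set of primitive defining vectors of the walls of $N_{X\sprime}=N_X^{\,\psi}$, so $\Rats(X\sprime)=\Rats(X)^{\,\psi}$. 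Consequently each of $\theh^{\,\psi}$, $(\ve_i\spar{\pm})^{\,\psi}$, $(\vgamma\spar{+})^{\,\psi}$ has the same mutual intersection numbers as its counterpart on $X$, and each transported class that was a smooth rational curve on $X$ is again a smooth rational curve on $X\sprime$.

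With this dictionary I would run the criteria of Section~\ref{subsec:findingaut} for $h\sprime$. The associated finite set $\EEE$ is the $\psi$-image of the one for $\theh$, and the latter is empty because $|\theh|$ is the double plane $X\to\P^2$; hence $|h\sprime|$ is base-point free and defines a double covering $\pi(h\sprime)\colon X\sprime\to\P^2$ with sextic branch curve $B(h\sprime)$. The curves contracted by $\pi(h\sprime)$ are indexed by $\Rats(X\sprime)\cap\Roots(h\sp{\prime\perp}\cap S_{X\sprime})=\{(\ve_i\spar{\pm})^{\,\psi}\}_{i=1}^{6}$, a configuration of type $6A_2$, so $B(h\sprime)$ has exactly six ordinary cusps and no other singularity. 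Finally $(\vgamma\spar{+})^{\,\psi}\in\Rats(X\sprime)$ satisfies $\intf{(\vgamma\spar{+})^{\,\psi},h\sprime}=2$ and $\intf{(\vgamma\spar{+})^{\,\psi},(\ve_i\spar{+})^{\,\psi}}=1$, so its image under $\pi(h\sprime)$ is a conic through all six cusps. A six-cuspidal plane sextic whose cusps lie on a conic is of torus type (see~\cite{OkaPho2002}), i.e.\ defined by $f^{\prime 2}+g^{\prime 3}=0$ with $\{g\sprime=0\}$ the conic; this exhibits $X\sprime$ as birational to the double plane $w^2=f^{\prime 2}+g^{\prime 3}$.

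I expect the main obstacle to be the bookkeeping of the transport in the middle step rather than any single computation: one must be certain that $N_{X\sprime}$ is a $W(S_X)$-translate of $N_X$ inside the common positive cone so that $\Rats(X\sprime)=\Rats(X)^{\,\psi}$ holds exactly, and that the passage from the incidence ``six cusps on a conic'' to an actual torus decomposition $f^{\prime 2}+g^{\prime 3}$ is what is supplied, lattice-theoretically, by the index-$3$ overlattice structure of Proposition~\ref{prop:SX} carried over by $\psi$. The classical characterization of torus-type sextics then finishes the argument.
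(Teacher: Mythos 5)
Your proposal is essentially correct, but note that the paper itself contains no proof of this proposition: it is quoted from Degtyarev~\cite{Degtyarev2008} (with~\cite{Shimada2010} as an alternative source), and the accompanying Remark offers only a heuristic dimension count. So what you have written is a genuine reconstruction, and its skeleton is sound: very generality gives $S_{X\sprime}=S_X$ (stated in the paper just before the proposition); since $N_X$ and $N_{X\sprime}$ are standard fundamental domains of the same Weyl group $W(S_X)$, which acts simply transitively on such domains, some isometry $\psi$ carries $N_X$ to $N_{X\sprime}$, hence walls to walls, so $\Rats(X\sprime)=\Rats(X)^{\psi}$ by Theorem~\ref{thm:NX}; and the criteria of Section~\ref{subsec:findingaut} (emptiness of $\EEE$, the set of contracted $(-2)$-classes) are purely lattice-theoretic, so they transport, giving a double covering $X\sprime\to\P^2$ branched along a sextic whose only singularities are six cusps. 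Two points need patching. First, from $\intf{(\vgamma\spar{+})^{\psi},h\sprime}=2$ you assert that the image of this curve is a conic; a priori it could be a line covered $2{:}1$, and you should rule this out (easy: a line meets a sextic with total multiplicity $6<12$, so it cannot pass through six cusps, each of which contributes at least $2$); irreducibility of the sextic is similarly automatic, since distinct components would meet in non-unibranch singular points. Second, the implication ``six cusps lying on a conic $\Rightarrow$ torus type'' is a nontrivial classical theorem (due to Zariski), and your closing hedge that the torus decomposition is ``supplied, lattice-theoretically, by the index-$3$ overlattice structure'' has the logic slightly backwards: the index-$3$ structure (i.e.\ the class $\vgamma\spar{+}$) supplies only the incidence ``cusps on a conic'', while the passage from that incidence to an actual equation $f^{\prime 2}+g^{\prime 3}$ is projective geometry, external to the lattice argument. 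With that classical theorem invoked cleanly, your argument is complete, and it is in the same K3-lattice spirit as the proofs in the references the paper cites.
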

\begin{remark}
The following naive dimension count may help in understanding Proposition~\ref{prop:fsprimegsprime}:
the  dimension of the parameter space of pairs $(f\sprime, g\sprime)$ 
of homogeneous polynomials of degree $3$ and $2$ 
modulo linear transformation
is equal to
\[
\dim H^0(\P^2, \OOO(3))+ \dim H^0(\P^2, \OOO(2))-\dim \GL(3, \C)
=7=\rank T_X-2=\dim\QQQ.
\]
See also~\cite{Shimada2010} for the proof of Proposition~\ref{prop:fsprimegsprime}.
\end{remark}
Since $f$ and $g$ are very general, 
we see that $X$ is very general in $\MMM$, and hence  
we can assume that $\omega_X$ is very general in the period domain $\QQQ$.
Therefore, by~\eqref{eq:periodTpm1} in Example~\ref{example:simple_period_condition},  we have
\begin{equation}\label{eq:OTomega}
\OG(T_X, \omega_X)=\{\pm 1\}.
\end{equation}
The discriminant group $A(S_X)$ of $S_X$  is isomorphic to $\Z/2\Z\times (\Z/3\Z)^4$.
Hence, by Example~\ref{example:simple_period_condition}, 
we obtain the following: 
\begin{proposition}\label{prop:inj}
The natural representation of 
$\Aut(X)$ on $S_X$ is faithful.
\qed
\end{proposition}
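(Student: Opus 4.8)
The plan is to deduce Proposition~\ref{prop:inj} directly from the machinery already assembled in Example~\ref{example:simple_period_condition}, applied to the specific discriminant group of $S_X$. The key input is the stated fact $A(S_X)\cong \Z/2\Z\times(\Z/3\Z)^4$ together with~\eqref{eq:OTomega}, which tells us $\OG(T_X, \omega_X)=\{\pm 1\}$. So the argument is essentially a verification that the general obstruction-to-faithfulness discussion in Example~\ref{example:simple_period_condition} produces the desired conclusion in this case, rather than a fresh computation.

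First I would recall the criterion for faithfulness derived at the end of Example~\ref{example:simple_period_condition}. Since $\Aut(X)$ acts faithfully on $H^2(X,\Z)$ by Torelli, the natural homomorphism $\Aut(X)\to \OG(S_X,\PPP_X)$ fails to be injective only if there is a nontrivial automorphism acting trivially on $S_X$. Such an automorphism would extend to an isometry $g_H$ of $H^2(X,\Z)$ with $g_H|S_X=\id$, and because $\omega_X$ is very general we have $\OG(T_X,\omega_X)=\{\pm 1\}$ by~\eqref{eq:OTomega}, forcing $g_H|T_X=\pm 1$. The case $g_H|T_X=+1$ gives $g_H=\id$ on both summands, hence the trivial automorphism; so the only possible nontrivial kernel element corresponds to $g_H|S_X=1$ and $g_H|T_X=-1$.

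Next I would invoke Proposition~\ref{prop:NikulinExtend} (Nikulin's gluing criterion). An isometry acting as $1$ on $S_X$ and $-1$ on $T_X$ extends across the unimodular overlattice $H^2(X,\Z)$ if and only if the actions agree on the glued discriminant forms via $\iota_H\colon q(S_X)\cong -q(T_X)$. The action of $\id_{S_X}$ on $A(S_X)$ is trivial, whereas the action of $-\id_{T_X}$ on $A(T_X)$ is $-1$. These agree through the isomorphism $A(S_X)\cong A(T_X)$ precisely when $-1=1$ on the discriminant group, i.e.\ when $A(S_X)$ is $2$-elementary. Here $A(S_X)\cong \Z/2\Z\times(\Z/3\Z)^4$ contains a $3$-torsion part on which $-1\ne 1$, so $A(S_X)$ is \emph{not} $2$-elementary. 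Hence no such extension exists, there is no nontrivial kernel element, and the representation $\Aut(X)\to\OG(S_X)$ is faithful.

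The main obstacle is not any deep step but rather making the discriminant-group computation airtight: one must confirm that $-\id$ acts nontrivially on $A(S_X)\cong \Z/2\Z\times(\Z/3\Z)^4$, which is immediate since inversion is nontrivial on any group of odd order exceeding $1$ (the $(\Z/3\Z)^4$ factor suffices). Strictly speaking the quoted isomorphism $A(S_X)\cong\Z/2\Z\times(\Z/3\Z)^4$ itself rests on an explicit lattice computation using the generators from Proposition~\ref{prop:SX}; I would either cite that computation or note that it follows from the description of $S_X$ via its index-$3$ sublattice $S_{X,0}$. Given the isomorphism, the rest is a direct application of the already-established Example~\ref{example:simple_period_condition}, so the proof is short.
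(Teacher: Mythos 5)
Your proof is correct and follows exactly the paper's own route: the paper proves Proposition~\ref{prop:inj} precisely by citing Example~\ref{example:simple_period_condition} together with \eqref{eq:OTomega} and the fact that $A(S_X)\cong \Z/2\Z\times(\Z/3\Z)^4$ is not $2$-elementary, which is the argument you spell out (kernel element extends to $g_H$ with $g_H|S_X=1$, $g_H|T_X=\pm 1$, and the $-1$ case is excluded by Proposition~\ref{prop:NikulinExtend}). You have merely unpacked the details that the paper leaves implicit in its citation.
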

We will consider $\Aut(X)$ as a subgroup of $\OG(S_X, \PPP_X)$  from now on.
By~Theorem~\ref{thm:Torelli}  and~\eqref{eq:Sepaag}, we have the following:
\begin{proposition}
An element  $g\in \OG(S_X, \PPP_X)$ belongs to 
$\Aut(X)$
if and only if $g$  acts on $A(S_X)$ as $1$ or $-1$,  and\, $\Sep(\ampleX , \ampleX ^{g})=\emptyset$ holds.
\qed
\end{proposition}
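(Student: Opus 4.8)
The plan is to assemble the biconditional directly from the results already established, since this statement is a packaging of Theorem~\ref{thm:Torelli} into an effectively verifiable form. First I would invoke Proposition~\ref{prop:inj}: because the natural representation $\Aut(X)\to\OG(S_X,\PPP_X)$ is faithful, we may identify $\Aut(X)$ with its image $G=\Image(\Aut(X)\to\OG(S_X,\PPP_X))$. By Theorem~\ref{thm:Torelli}, an element $g\in\OG(S_X,\PPP_X)$ lies in $G$ if and only if $g$ preserves $N_X$ and satisfies the period condition. Thus it remains only to translate each of these two conditions into the computable criteria in the statement.

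For the first condition, I would observe that preserving $N_X$ (for $g\in\OG(S_X,\PPP_X)$) means exactly $g\in\OG(S_X,N_X)$, and then apply the criterion~\eqref{eq:Sepaag} from Section~\ref{subsec:OSXNX}. This criterion is applicable here because the hypothesis $g\in\OG(S_X,\PPP_X)$ forces $\intf{\ampleX,\ampleX^{g}}>0$, and it yields that $g$ preserves $N_X$ if and only if $\Sep(\ampleX,\ampleX^{g})=\emptyset$. For the second condition, I would appeal to Example~\ref{example:simple_period_condition} together with the equality~\eqref{eq:OTomega}, namely $\OG(T_X,\omega_X)=\{\pm1\}$, which holds because the very general choice of $(f,g)$ places $\omega_X$ at a very general point of the period domain $\QQQ$. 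Under this hypothesis the period condition for $g$ reduces precisely to the action of $g$ on the discriminant group $A(S_X)$ being $1$ or $-1$. Combining the two reformulations gives exactly the asserted equivalence.

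Since every ingredient is already in hand, I do not expect a genuine obstacle; the proof is an assembly rather than a construction. The one point deserving care is the justification that~\eqref{eq:OTomega} is legitimately available for our $X$, that is, that the very general hypothesis on the pair $(f,g)$ really does make $\omega_X$ very general in $\QQQ$. This is the step where the whole argument could quietly fail if the moduli-theoretic setup were loose, but it has been secured by Proposition~\ref{prop:fsprimegsprime} and the accompanying discussion of the moduli space $\MMM$ of lattice-polarized $K3$ surfaces, so I would simply cite that and conclude.
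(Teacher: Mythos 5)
Your proof is correct and follows exactly the paper's own route: the paper derives this proposition by combining Proposition~\ref{prop:inj}, Theorem~\ref{thm:Torelli}, the criterion~\eqref{eq:Sepaag}, and the reduction of the period condition to the $\pm 1$ action on $A(S_X)$ via~\eqref{eq:OTomega} and Example~\ref{example:simple_period_condition}, with the very-generality of $\omega_X$ secured by the moduli discussion around Proposition~\ref{prop:fsprimegsprime}. Nothing is missing, and your flagged point of care (the legitimacy of~\eqref{eq:OTomega}) is handled the same way the paper handles it.
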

We introduce an auxiliary group $M$, which
makes the descriptions of $N_X$ and $\Aut(X)$ much easier.
Let $M$ be the subgroup of $\OG(S_X, \PPP_X)$
consisting of  elements $g$ satisfying $\theh^g=\theh$ and  
\[
\{\;\ve_1\spar{+}, \ve_1\spar{-},\dots, \,\ve_6\spar{+}, \ve_6\spar{-}\;\}^g
=\{\;\ve_1\spar{+}, \ve_1\spar{-},\dots, \,\ve_6\spar{+}, \ve_6\spar{-}\;\}.
\]
Then $M$ is isomorphic to $\Z/2\Z\times S_6$,
generated by the involution $\invol(\theh)$ and permutations $\sigma\in S_6$ given by
\[
\theh^{\sigma}=\theh, \quad \ve_i\sp{(+) \sigma}= \ve_{i^{\sigma}}\sp{(+)},  \quad \ve_i\sp{(-)\sigma}= \ve_{i^{\sigma}}\sp{(-)}.
\]
For  each $g\in M$, we have   $\ampleX = \ampleX ^{g}$, 
and  hence  $M\subset \OG(S_X, N_X)$.
The discriminant form $q(S_X)$ of $S_X$   is isomorphic to
\[
\left(  \left[\frac{\,1\,}{2}\right], \Z/2\Z   \right)
\;\oplus\; 
\left(  \left[\frac{\,4\,}{3}\right], \Z/3\Z \right)^{\oplus 3}
\;\oplus\;
 \left( \left[\frac{\,2\,}{3}\right], \Z/3\Z \right).
\] 
Here $([\alpha], \Z/m\Z)$ denotes a cyclic group $A=\gen{\gamma}$ of order $m$
generated by $\gamma$ equipped with the quadratic form $q\colon A\to \Q/2\Z$ 
such that $q(\gamma)=\alpha$.
The natural homomorphism $\OG(S_X)\to \Aut(q(S_X))$ maps $M$ to $ \Aut(q(S_X))$ isomorphically.
Note that $\iota(\theh)$ acts on $A(S_X)$ as  $-1$.
Hence we have 
\[
M\cap \Aut(X)=\{1, \invol(\theh)\}.
\]
\begin{remark}\label{rem:rats}
By means of the methods in Section~\ref{subsec:Rats}, 
we can make the list of classes of smooth rational curves $C$ on $X$ with $\intf{[C],  \theh}=m$
for each non-negative integer $m$.
The size $\nu(m)$ of this list is as follows:
When $m$ is odd, we have $\nu(m)=0$, whereas for $m$ even, we have 
\[
\renewcommand{\arraystretch}{1.4}
\begin{array}{c| cccccccc}
m & 0& 2 & 4 & 6 & 8 & 10 & 12 & 14 \\
\hline
\nu(m) & 12 & 17 &0 & 492 & 720 &  492 & 8292  &8730 
\end{array}.
\]
For $i, j$ with $1\le i\le 6$, $1\le j\le 6$, and $i\ne j$,
let $\ell_{ij}\subset \P^2$ denote the line passing through 
the singular points $\bar{p}_i$ and $\bar{p}_j$ of the branch curve $B(\theh)$,
and let $\tilell_{ij}\subset X$ be the strict transform of $\ell_{ij}$.
The $\nu(2)=17$ smooth rational curves on $X$ of degree $2$ with respect to $\theh$ are 
the lifts $\Gamma\spar{\pm}$ of the conic
 $\barGamma\subset \P^2$ 
and the curves $\tilell_{ij}$.
\end{remark}
\subsection{Automorphisms of $X$}\label{subsec:auttype}
By the method in Section~\ref{subsec:findingaut},
we find many automorphisms of $X$ from nef vectors of norm $2$.
Among them, we have the following automorphisms:
\begin{enumerate}[{\;\;\rm type (a):}]
\item the involution $\invol(\theh)$, 
\item $90$ involutions $\invol(h_{IJ})$ associated with polarizations $h_{IJ}$ of degree $2$ such that $\intf{h_{IJ}, \theh}=6$
and that $\Sing(B(h_{IJ}))$ is of type $A_3+A_5$,
\item $12$ involutions $\invol(\haa\sp{\pm})$ associated with polarizations $\haa\sp{\pm}$ 
 of degree $2$  such that $\intf{\haa\sp{\pm}, \theh}=4$
 and that $\Sing(B(\haa\sp{\pm}))$ is of type $A_2+5 A_1$,
\item $360$ involutions $\invol(h_{\pm J})$ associated with 
polarizations  $h_{\pm J}$  of degree $2$  such that $\intf{h_{\pm J}, \theh}=14$, 
 and that $\Sing(B(h_{\pm J}))$ is of type $D_4+A_5$, and 
\item $360$ translations associated with  sections $\ve_j\spar{\pm}$ of infinite order of $120$ Jacobian fibrations $\phi\colon X\to \P^1$
defined by $(f_{\phi}, z_{\phi})=(f_{\pm I}, \ve_i\spar{\pm})$ with $\intf{f_{\pm I}, \theh}=4$
such that $ \MW_{\phi}$ is torsion-free of rank $4$
and that the reducible fibers of $\phi$ are of type  $D_4+A_3$.
\end{enumerate}
See subsections below for more precise descriptions of these automorphisms.
We will show,  by Borcherds' method, that these automorphisms generate $\Aut(X)$.
\subsection{Primitive embedding $S_X\inj L_{26}$}
To apply Borcherds' method, 
we embed $S_X$ into $L_{26}$ primitively.
Let $R_0$ be a negative-definite root lattice of type $A_1+6A_2$
with a  basis 
\begin{equation}\label{eq:basisR0}
\alpha, \;\beta_1\spar{+}, \beta_1\spar{-}, \; \dots,  \; \beta_6\spar{+}, \beta_6\spar{-}
\end{equation}
consisting of roots that form the dual graph as in Figure~\ref{fig:basisR0}.
Let
 \[
\alpha\dual, \;\beta_1\sp{(+)\vee}, \beta_1\sp{(-)\vee}, \; \dots,  \; \beta_6\sp{(+)\vee}, \beta_6\sp{(-)\vee}
\]
be the basis of the dual lattice $R_0\dual$ that is dual to the basis~\eqref{eq:basisR0}.
Then
\[
R:=R_0 + \Z \left(  \beta_1 \sp{(+)\vee}+ \cdots + \beta_6 \sp{(+)\vee} \right)\;\;\subset\;\; R_0\dual
\] 
is an even lattice whose discriminant form is isomorphic to $-q(S_X)$.
Recall that 
 the natural homomorphism  $ \OG(S_X)\to \Aut(q(S_X))$ maps $M$ to $\Aut(q(S_X))$ isomorphically, and hence is surjective.
Therefore,
by Nikulin~\cite{Nikulin1979},
 there exists a \emph{unique} (up to the action of $\OG(S_X)$) even unimodular overlattice of $S_X\oplus R$
in which $S_X$ and $ R$ are both primitive.
Taking this unimodular overlattice as $L_{26}$, 
we find a primitive embedding
\[
\iota\colon S_X\inj L_{26}.
\]
We consider the tessellation  of $N_X\subset \PPP_X$  by the $\LS$-chambers
associated with this primitive embedding.
Let $(V, E)$ be the graph of $\LS$-chambers contained in $N_X$
(see Definition~\ref{def:thegraphVE}).
By~\eqref{eq:OTomega} and Propositions~\ref{eq:GoactsonVE},~\ref{prop:finitewalls}, 
we see  that 
the group $G=\Aut(X)\subset \OG(S_X, \PPP_X)$ acts on the graph $(V, E)$,
and we can apply the algorithm in~\cite[Section 4.1]{BrandhorstShimada2021}.
\begin{remark}
Primitive embeddings of $S_X$ into $L_{26}$ are not unique.
In fact, the genus of negative-definite even lattices containing the isomorphism class of $R$
consists of $26$ isomorphism classes.
\end{remark}
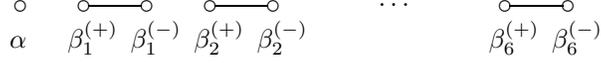
\begin{figure}
\def\hevA{0}
\def\he{4}
\setlength{\unitlength}{1.4mm}
{
\begin{picture}(80,10)(3, 0)
\put(16, \he){\circle{1}}
\put(15, \hevA){$\alpha$}
\put(22, \he){\circle{1}}
\put(20.5, \hevA){$\beta_1\spar{+}$}
\put(22.5, \he){\line(5, 0){5}}
\put(28, \he){\circle{1}}
\put(26.5, \hevA){$\beta_1\spar{-}$}
\put(34, \he){\circle{1}}
\put(32.5, \hevA){$\beta_2\spar{+}$}
\put(34.5, \he){\line(5, 0){5}}
\put(40, \he){\circle{1}}
\put(38.5, \hevA){$\beta_2\spar{-}$}
\put(50, \he){$\dots$}
\put(62, \he){\circle{1}}
\put(60.5, \hevA){$\beta_6\spar{+}$}
\put(62.5, \he){\line(5, 0){5}}
\put(68, \he){\circle{1}}
\put(66.5, \hevA){$\beta_6\spar{-}$}
\end{picture}
}
\caption{Basis of $R_0$}\label{fig:basisR0}
\end{figure}
The image $\iota(\ampleX)\in \PPP_{26}\cap L_{26}$ of the ample class $\ampleX \in S_X$ defined by~\eqref{eq:theample} satisfies
\begin{equation}\label{eq:RootsR}
\Roots(([\iota (\ampleX )]\inj L_{26})\sperp) =\Roots((\iota \colon S_X \inj L_{26})\sperp) \cong \Roots(R),
\end{equation}
where $[\iota (\ampleX )]$ is the sublattice of $L_{26}$ generated by $\iota (\ampleX )$.
Hence $\ampleX $ is an interior point of an $\LS$-chamber,
which we denote by $\LSD_0$.
Moreover, we have
\[
\Sep_{26}( \iota(\ampleX ), \iota(\theh))=\emptyset,
\]
where we denote by  $\Sep_{26}$ the set of separating $(-2)$-vectors in $L_{26}$.
Hence the class $\theh$ is a point of  $\LSD_0$.
We choose a vector $\tilample\in  \PPP_L \cap L_{26}$ that satisfies  
\[
\Roots(([\tilample] \inj L_{26})\sperp) =\emptyset,
\quad 
 \Sep_{26}(\iota(\ampleX ), \tilample)=\emptyset.
\]
Then $\tilample$ is an interior point of a Conway chamber $\Concham_0$ such that 
$\iota\inv (\Concham_0)=\LSD_0$.
We can calculate a subset of the set of roots $\tilr$ of $L_{26}$ such that
 $\Concham_0 \cap (\tilr)\sperp$ is a wall of $\Concham_0$,
either by Vinberg's algorithm~\cite{VinbergBombay}, 
or by calculating $\Sep_{26}(\tilample, \vect{v})$,
where $\vect{v}\in \PPP_{26}\cap L_{26}$ are randomly chosen vectors.
If this subset  is  large enough, 
these roots $\tilr$ span $L_{26}\tensor\Q$ and hence
the Weyl vector $\weyl_0$ of the Conway chamber  $\Concham_0$ is 
calculated by solving 
the equations $\intf{\weyl_0, \tilr}=1$.
\begin{remark}
The $\ADE$-type  of 
the roots 
in~\eqref{eq:RootsR} is
 $A_1+6 A_2$.
Hence the hyperplanes perpendicular to these roots 
decompose $R\tensor \R$ into $2\times 6^6$ regions.
Therefore there exist exactly $2\times 6^6$ Conway chambers $\Concham$ such that 
$\iota\inv (\Concham)=\LSD_0$.
\end{remark}
Thus we  prepared all the data necessary to start the algorithm
of~\cite[Section 4.1]{BrandhorstShimada2021}
to calculate a complete set $V_0$ of the representatives of $V/G$ and a finite generating set of $G=\Aut(X)$.
We executed this algorithm.
The computation terminated
and yielded the following: 
\begin{proposition}\label{prop:seven}
The set $V_0$ consists of the following seven $\LS$-chambers:
\[
\LSD_0, \; \; \LSD_1\spar{1}, \; \LSD_1\spar{2},  \;  \LSD_1\spar{3},  \;  \LSD_1\spar{4},  \;  \LSD_1\spar{5},  \;  \LSD_1\spar{6}.
\]
\end{proposition}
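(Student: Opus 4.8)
The statement is the certified output of the algorithm of Proposition~\ref{prop:VE} (from~\cite[Section~4.1]{BrandhorstShimada2021}), so the plan is to verify that its hypotheses hold for the present $(V,E)$ and $G$, to run it from the base vertex $\LSD_0$, and to confirm that the process halts after producing exactly the seven listed chambers. First I would record that the local effectiveness properties required by the algorithm are available here: by~\eqref{eq:OTomega} the period condition takes the simple form~\eqref{cond:period}, so $G=\Aut(X)$ acts on the graph $(V,E)$ by Proposition~\ref{eq:GoactsonVE}; and since the orthogonal complement $R$ of $\iota\colon S_X\inj L_{26}$ contains $(-2)$-vectors, assumption~\eqref{cond:atleastoneroot} holds, so by Proposition~\ref{prop:finitewalls} every $\LS$-chamber has finitely many walls, and both the primitive defining vectors of these walls and a Weyl vector of the chamber across each wall are computable from a Weyl vector. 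The initial vertex is $\LSD_0$, whose interior contains $\ampleX$, equipped with the Weyl vector $\weyl_0$ constructed above.

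The algorithm then runs as a breadth-first exploration: I maintain a set $V_0$ of orbit representatives, initialized to $\{\LSD_0\}$, and a queue of chambers whose walls have not yet been examined. For a chamber $D$ drawn from the queue I would compute, via Proposition~\ref{prop:finitewalls}, the primitive defining vectors in $S_X\dual$ of its walls, and split them into walls of $N_X$ --- those whose defining vector is a positive multiple of a class in $\Rats(X)$, detected through Theorem~\ref{thm:NX} and Proposition~\ref{prop:Rats} --- and walls interior to $N_X$. Only the latter contribute edges of $(V,E)$: for each such wall I obtain the adjacent $\LS$-chamber $D\sprime\subset N_X$ and a Weyl vector $\weyl\sprime$, again by Proposition~\ref{prop:finitewalls}. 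I then test whether $D\sprime$ is $G$-equivalent to a chamber already in $V_0$; if it is, I record an isometry realizing this equivalence among the generators, and if it is not, I adjoin $D\sprime$ to $V_0$ and to the queue.

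The core subroutine is the $G$-equivalence test for two chambers $D$ and $D\sprime$. I would first enumerate all isometries $g\in\OG(S_X)$ with $D^g=D\sprime$: such a $g$ maps the finite set of primitive defining vectors of the walls of $D$ bijectively onto that of $D\sprime$ while preserving all mutual intersection numbers, so the candidates form either the empty set or a coset of the stabilizer of $D$, which is finite because it acts faithfully on the finite set of walls and these span $S_X\tensor\R$. For each candidate $g$ I would then decide membership in $G=\Aut(X)$ by the criterion of Theorem~\ref{thm:Torelli} in the computable form~\eqref{eq:Sepaag} together with~\eqref{cond:period}: namely $g$ must act on $A(S_X)$ as $+1$ or $-1$ and must satisfy $\Sep(\ampleX,\ampleX^g)=\emptyset$. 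If some candidate passes both tests then $D$ and $D\sprime$ are $G$-equivalent and that candidate is the desired realizing isometry; otherwise $D\sprime$ begins a new orbit. Applying the same routine with $D=D\sprime=\LSD_0$ yields the stabilizer $\Stab_G(\LSD_0)$.

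The exploration terminates when the queue is empty, that is, when every interior wall of every chamber of $V_0$ leads to a chamber $G$-equivalent to one already in $V_0$; at that point conditions~(a) and~(b) of Proposition~\ref{prop:VE} hold for $V_0$, so $V_0$ is a complete set of orbit representatives and $\Aut(X)$ is generated by the recorded realizing isometries together with $\Stab_G(\LSD_0)$. Carrying this out produces the seven chambers $\LSD_0,\LSD_1\spar{1},\dots,\LSD_1\spar{6}$. The principal obstacle is not conceptual but the scale of the bookkeeping inside the rank-$26$ lattice $L_{26}$: an $\LS$-chamber may possess a large number of walls, the adjacent-chamber and Weyl-vector computations of Proposition~\ref{prop:finitewalls} must be iterated over all of them, and the equivalence test must be run exhaustively enough that no orbit coincidence is overlooked, so that the value $|V_0|=7$ is genuinely certified rather than merely observed. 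This is why the computation is delegated to {\tt GAP}~\cite{GAP}.
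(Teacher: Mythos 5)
Your proposal is correct and takes essentially the same route as the paper: the paper likewise checks the two hypotheses (the period condition via~\eqref{eq:OTomega} giving the action of $G$ on $(V,E)$, and~\eqref{cond:atleastoneroot} giving finiteness and computability of walls via Proposition~\ref{prop:finitewalls}), constructs the initial chamber $\LSD_0$ with its Weyl vector $\weyl_0$, and then certifies Proposition~\ref{prop:seven} simply as the terminating output of the algorithm of~\cite[Section~4.1]{BrandhorstShimada2021} executed in {\tt GAP}. The extra detail you supply (the breadth-first exploration, the splitting of walls into those on $\partial N_X$ and interior ones, and the $G$-equivalence subroutine via the $\pm1$ action on $A(S_X)$ and $\Sep(\ampleX,\ampleX^g)=\emptyset$) is exactly what that cited algorithm does, so the two arguments coincide in substance.
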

We will describe each of these $\LS$-chambers in $V_0$,
and during the description, we  present automorphisms
in the set $\HHH$ defined by~\eqref{eq:HHH}.
\par
We use the following convention.
Let $\LSD$ be an $\LS$-chamber, and let $\Concham$ be a Conway chamber such that $\iota\inv(\Concham)=\LSD$.
Let $\weyl$ be the Weyl vector of $\Concham$.
For a wall $w$ of $\LSD$, 
let $v\in S_X\dual$ be the primitive defining vector of $w$ (see~Definition~\ref{def:primitive}), and
we put 
\[
n(w):=\intf{v, v}, \quad a(w):=\intf{\weyl, \iota(v)},   \quad h(w):=\intf{\theh, v}.
\]
These rational numbers are useful in classifying walls.
\subsection{The $\LS$-chamber $\LSD_0$}
The initial $\LS$-chamber $\LSD_0$ contains the ample class $\ampleX $ in its interior.
 The stabilizer subgroup of $\LSD_0$ in $G$ is $\{1, \invol(\theh)\}$.
The group $M$ leaves 
 $\LSD_0$ invariant.
The chamber $\LSD_0$  has $110$ walls, and 
the action of $M$ decomposes the walls of $\LSD_0$ into four orbits 
 $\orb_1,\orb_2, \orb_3, \orb_4$ of sizes
 $2$, $12$, $6$, $90$, respectively.
  The data of these orbits are given in Table~\ref{table:walls_of_C0}.
  \begin{table}
 \[
 \renewcommand{\arraystretch}{1.2}
 \begin{array}{c | c| c c c | l }
  & \textrm{size} & n & a & h & \\
  \hline 
  \orb_1  & 2 &-2 & 1 & 2 & \vgamma\spar{\pm} \\
  \orb_2  & 12 & -2 & 1 & 0 &\ve_{i}\spar{\pm}  \\
  \orb_3  & 6 &-3/2 & 3/2 & 1 & \text{isom with } \LSD_1\spar{\alpha}  \\
  \orb_4  & 90 &-2/3 & 3 & 2 & \text{isom with } \LSD_0
  \end{array}
 \]
 \vskip 5pt
 \caption{Walls of $\LSD_0$}\label{table:walls_of_C0}
 \end{table}
  \par
 The orbit $\orb_1$ of size $2$ consists of  
 $(\vgamma\spar{\pm})\sperp \cap \LSD_0$.
The orbit $\orb_2$ of size $12$ consists of 
 $(\ve_i\spar{\pm})\sperp \cap  \LSD_0$.
 Hence the $\LS$-chamber adjacent to $\LSD_0$ across a wall in $\orb_1$ or $\orb_2$
 is not  contained in $N_X$.
 \par
 The orbit $\orb_3$ of size $6$ consists of the walls $(v_{\alpha})\sperp\cap \LSD_0$
whose primitive defining vectors $v_{\alpha}$ are given by
 \begin{equation}\label{eq:wiofC0}
\intf{v_{\alpha}, \theh}=1,
\quad 
\intf{v_{\alpha}, \ve_i\spar{+}}=\intf{v_{\alpha}, \ve_i\spar{-}}=
\begin{cases}
1 & \textrm{if $i=\alpha$, }\\
0 & \textrm{if $i\ne \alpha$.}
\end{cases}
\end{equation}
%
Let $\LSD_1\spar{\alpha}$ be the $\LS$-chamber adjacent to $\LSD_0$ across the wall $(v_{\alpha})\sperp\cap \LSD_0$.
Then $\LSD_1\spar{\alpha}$ is contained in  $N_X$, but is not $G$-equivalent  
to $\LSD_0$, and any two of $\LSD_1\spar{1}, \dots, \LSD_1\spar{6}$ are not $G$-equivalent to each other.
Hence these chambers $\LSD_1\spar{\alpha}$ ($\alpha=1, \dots, 6$) are added to $V_0$ as new representatives of $V/G$.
 \par
The walls $w_{IJ}$ in the orbit $\orb_4$ of size $90$ are indexed by ordered pairs $(I, J)$,
where $I$ and $J$ are subsets of $\{1, \dots, 6\}$
satisfying $|I|=|J|=2$ and $I\cap J=\emptyset$.
The primitive defining vector $v_{IJ}\in S_X\dual$ of $w_{IJ}\in \orb_4$
is given by 
\begin{eqnarray*}
&& \intf{v_{IJ}, \theh}=2, \\
&&\intf{v_{IJ}, \ve_i\spar{+}}=0,\;\; \intf{v_{IJ}, \ve_i\spar{-}}=0, \qquad \textrm{if $i\notin I\cup J$, }\\
&&\intf{v_{IJ}, \ve_i\spar{+}}=1,\;\; \intf{v_{IJ}, \ve_i\spar{-}}=0, \qquad \textrm{if $i\in I$, }\\
&&\intf{v_{IJ}, \ve_i\spar{+}}=0,\;\; \intf{v_{IJ}, \ve_i\spar{-}}=1, \qquad \textrm{if $i\in J$.} 
\end{eqnarray*}
%
The $\LS$-chamber $\LSD_{IJ}$ adjacent to $\LSD_0$ across the wall $w_{IJ}$
is $G$-equivalent to $\LSD_0$.
An automorphism  $g_{IJ}\in G$ that maps $\LSD_0$ to $\LSD_{IJ}$  isomorphically  is given as follows.
Let $h_{IJ}$ be a vector of $S_X\tensor\Q$ defined by 
\begin{eqnarray}
&& \intf{h_{IJ}, \theh}=6, \nonumber\\
&&\intf{h_{IJ}, \ve_i\spar{+}}=0,\;\; \intf{h_{IJ}, \ve_i\spar{-}}=0, \qquad \textrm{if $i\notin I\cup J$, } \label{eq:hIJ}\\ 
&&\intf{h_{IJ}, \ve_i\spar{+}}=1,\;\; \intf{h_{IJ}, \ve_i\spar{-}}=1, \qquad \textrm{if $i\in I$, } \nonumber\\
&&\intf{h_{IJ}, \ve_i\spar{+}}=0,\;\; \intf{h_{IJ}, \ve_i\spar{-}}=3, \qquad \textrm{if $i\in J$.}  \nonumber
\end{eqnarray}
Then $h_{IJ} \in S_X$ and $\intf{h_{IJ}, h_{IJ}}=2$.
We confirm  $\Sep(h_{IJ}, \ampleX )=\emptyset$, and 
hence $h_{IJ}\in N_X$.
The complete linear system $|h_{IJ}|$ is proved to be fixed-component free by the criterion in~Section~\ref{subsec:findingaut}.
The involution $\invol(h_{IJ})$ associated with the double covering $\pi(h_{IJ})\colon X\to \P^2$
given by $|h_{IJ}|$ maps $\LSD_0$ to $\LSD_{IJ}$  isomorphically.
Therefore 
\[
\invol(h_{IJ})\inv=\invol(h_{IJ})\;\;\in\;\; \TG(\LSD_{IJ},  u_0(\LSD_{IJ}))
\]
in the notation of Section~\ref{subsec:VE}.
These involutions $\invol(h_{IJ})$  are the involutions of type (b) in Section~\ref{subsec:auttype}.
\begin{remark}\label{rem:PhiIJ}
Suppose that $I=\{i_1, i_2\}$, $J=\{j_1, j_2\}$, and
\[
\{1, \dots, 6\}-(I \cup J)=\{k_1, k_2\}.
\]
Then the  smooth rational curves on $X$ contracted to points by 
the double covering $\pi(h_{IJ})\colon X\to \P^2$ 
are as in Figure~\ref{fig:IJ},
where $\tilell_{j_1 j_2}$  is the curve given in Remark~\ref{rem:rats}.
In particular, the singular locus of the branch curve $B(h_{IJ})$
is of type $A_3+A_5$.
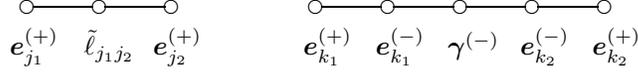
\begin{figure}
\def\hevA{0}
\def\he{4}
\setlength{\unitlength}{1.6mm}
{
\begin{picture}(80,10)(3, 0)
\put(16, \he){\circle{1}}
\put(14.6, \hevA){$\ve_{j_1}\spar{+}$}
\put(16.5, \he){\line(5, 0){5}}
\put(22, \he){\circle{1}}
\put(20.8, \hevA){$\tilell_{j_1 j_2}$}
\put(22.5, \he){\line(5, 0){5}}
\put(28, \he){\circle{1}}
\put(26.5, \hevA){$\ve_{j_2}\spar{+}$}
\put(40, \he){\circle{1}}
\put(39, \hevA){$\ve_{k_1}\spar{+}$}
\put(46, \he){\circle{1}}
\put(45, \hevA){$\ve_{k_1}\spar{-}$}
\put(52, \he){\circle{1}}
\put(51, \hevA){$\vgamma\spar{-}$}
\put(58, \he){\circle{1}}
\put(57, \hevA){$\ve_{k_2}\spar{-}$}
\put(64, \he){\circle{1}}
\put(63, \hevA){$\ve_{k_2}\spar{+}$}
\put(40.5, \he){\line(5, 0){5}}
\put(46.5, \he){\line(5, 0){5}}
\put(52.5, \he){\line(5, 0){5}}
\put(58.5, \he){\line(5, 0){5}}
\end{picture}
}
\caption{Exceptional curves of $\pi(h_{IJ})$}\label{fig:IJ}
\end{figure}
\end{remark}
\begin{remark}\label{rem:IJandJI}
We have ${v_{IJ}}^{\invol(\theh)}= v_{JI}$, ${h_{IJ}}^{\invol(\theh)} \ne h_{JI}$, and can confirm that  
the involution $\invol({h_{IJ}}^{\invol(\theh)} )= \invol(\theh)  \invol(h_{IJ}) \invol(\theh)$
is equal to $\invol(h_{JI})$.
\end{remark}
\subsection{The $\LS$-chamber $\LSD_1\spar{\alpha}$} 
The stabilizer subgroup of $\LSDaa$ in $G$ is $\{1, \invol(\theh)\}$.
The group $M$ acts on the set $\{\LSD_1\spar{1},  \dots, \LSD_1\spar{6}\}$ transitively.
Let $\Maa$ be the stabilizer subgroup of $\LSDaa$ in $M$.
Then $\Maa$ is isomorphic to $\Z/2\Z\times S_5$.
The chamber $\LSDaa$  has $110$ walls, and 
the action of $\Maa$ decomposes the walls of $\LSDaa$ into seven  orbits 
 $\orbb_1, \dots, \orbb_7$.
 The data of these orbits are given in Table~\ref{table:walls_of_LSDaa}.
  \begin{table}
 \[
 \renewcommand{\arraystretch}{1.2}
 \begin{array}{c | c| c c c | l }
  & \textrm{size} & n & a & h & \\
  \hline 
  \orbb_1  & 1 & -3/2 & 3/2 & -1 & \textrm{back to $\LSD_0$} \\
  \orbb_2  & 2 &-2 & 1 & 2 & \vgamma\spar{\pm} \\
  \orbb_3  & 5 &-2 & 1 & 2 & \tilell_{\alpha\beta} \;\; (\beta\ne \alpha) \\
  \orbb_4  & 10 &-2 & 1 & 0 & \ve_{\beta}\spar{\pm}  \;\; (\beta\ne \alpha)\\
  \orbb_5  & 2 & -3/2 &3/2 & 1 & \text{isom with } \LSDaa\\
  \orbb_6  & 30 & -1/6 &7/2 & 3 & \text{isom with } \LSD_1\spar{\beta}  \;\; (\beta\ne \alpha)\\
  \orbb_7  & 60 & -2/3 &3 & 2 & \text{isom with }  \LSD_1\spar{\beta}  \;\; (\beta\ne \alpha)\\
  \end{array}
 \]
 \vskip 5pt
 \caption{Walls of $\LSDaa$}\label{table:walls_of_LSDaa}
 \end{table}
  \par
  The orbit $\orbb_1$  consists of a single wall,
  and the adjacent $\LS$-chamber across this wall is $\LSD_0$,
which means that this wall is a wall in the orbit $\orb_3$ of walls of $\LSD_0$ viewed from the opposite side.
 \par
The orbit $\orbb_2$ of size $2$ consists of  
 $(\vgamma\spar{\pm})\sperp \cap  \LSDaa$,
the orbit $\orbb_3$ of size $5$ consists of 
 $(\tilell_{\alpha\beta})\sperp \cap  \LSDaa$ with  $\beta\ne\alpha$,
 and 
 the orbit $\orbb_4$ of size $10$ consists of 
 $(\ve_{\beta}\spar{\pm})\sperp \cap  \LSDaa$ with $\beta\ne\alpha$.
 The adjacent $\LS$-chambers across these walls are therefore  not contained in $N_X$.
 \par
The orbit $\orbb_5$ is of size $2$.
One of the walls in $\orbb_5$  is defined by a vector $\vaa^+\in S_X\dual$ satisfying 
\begin{eqnarray*}
&& \intf{\vaa^+, \theh}=1, \\
&&  \intf{\vaa^+, \ve_{\alpha}\spar{+}}=2, \;\;  \intf{\vaa^+, \ve_{\alpha}\spar{-}}=-1, \\
&&   \intf{\vaa^+, \ve_{\beta}\spar{+}}=0, \;\;  \intf{\vaa^+, \ve_{\beta}\spar{-}}=0\qquad(\beta\ne \alpha),
\end{eqnarray*}
and the other wall in $\orbb_5$ is defined by the vector 
\[
\vaa^-:=(\vaa^{+})^{\invol(\theh)}.
\]
The adjacent $\LS$-chamber $\LSD_{\alpha}^+$ across the wall $(\vaa^+)\sperp\cap\LSDaa$ is $G$-equivalent  to $\LSDaa$.
Indeed, the following 
 automorphism  $\invol(\haa^+)\in G$  maps $\LSDaa$ to $\LSD_{\alpha}^+$  isomorphically.
 Let $\haa^+$ be the vector defined by 
\begin{eqnarray}
&& \intf{\haa^+, \theh}=4, \nonumber\\
&&  \intf{\haa^+, \ve_{\alpha}\spar{+}}=2, \;\;  \intf{\haa^+, \ve_{\alpha}\spar{-}}=0,  \label{eq:haaplus}\\
&&   \intf{\haa^+, \ve_{\beta}\spar{+}}=0, \;\;  \intf{\haa^+, \ve_{\beta}\spar{-}}=1\;\;\;\; (\beta\ne\alpha). \nonumber
\end{eqnarray}
Then we have $\haa^+\in S_X$ and $\intf{\haa^+, \haa^+}=2$.
We confirm  $\Sep( \haa^+, \ampleX )=\emptyset$ and hence   $\haa^+\in N_X$.
The complete linear system $|\haa^+|$ is proved to be fixed-component free
by the criterion in Section~\ref{subsec:findingaut}.
Then we can confirm by direct computation that 
the involution $\invol(\haa^+)$ associated with the double covering $\pi(h^+_{\alpha})\colon X\to \P^2$
given by $|\haa^+|$ induces  $\LSDaa\cong \LSD_{\alpha}^+$.
It is obvious that 
the automorphism  $ \invol(\haa^-):=\invol(\theh) \invol(\haa^+)\invol(\theh)$ maps $\LSDaa$ to 
the adjacent $\LS$-chamber $\LSD_{\alpha}^-$ across the wall $(\vaa^-)\sperp\cap\LSDaa$.
Therefore we have 
\[
 \invol(\haa^{\pm})=\invol(\haa^{\pm}) \inv \in \TG(\LSD_{\alpha}^{\pm}, u_0(\LSD_{\alpha}^{\pm}))
\]
in the notation of Section~\ref{subsec:VE}.
These involutions  $\invol(\haa^{\pm})$ are the involutions of type (c) in Section~\ref{subsec:auttype}.
\begin{remark}\label{rem:typecexceptional}
The branch curve $B(h^+_{\alpha})$ of the double covering  $\pi(\haa^+)$ 
has the singularities of type $A_2+5A_1$.
The exceptional curves over the singular point of type $A_2$
are $\vgamma\spar{-}$ and $\ve_{\alpha}\spar{-}$,
whereas the exceptional curves over the singular points of type $A_1$
are $\ve_{\beta}\spar{+}$ for $\beta\ne\alpha$.
In particular, the involution $\invol(\haa^+)$ interchanges $\vgamma\spar{-}$ and $\ve_{\alpha}\spar{-}$.
 \end{remark}
 \par
 The description of the orbit $\orbb_6$
 is rather complicated, and hence is postponed to the next subsection.
  \par
 We describe the orbit $\orbb_7$ of size $60$.
Suppose that  $\beta\in \{1, \dots, 6 \}$ 
 and  $F=\{i_1, i_2\}\subset \{1, \dots, 6 \}$ satisfy $i_1\ne i_2$, 
 $\beta\ne \alpha$ and $\{\alpha, \beta\}\cap \{i_1, i_2\}=\emptyset$.
Let  $v_{\beta F}\spar{+}\in S_X\dual$ be the vector defined by    
\begin{eqnarray*}
&& \intf{v_{\beta F}\spar{+}, \theh}=2, \\
&&  \intf{v_{\beta F}\spar{+}, \ve_i\spar{+}}=1, \;\;  \intf{v_{\beta F}\spar{+}, \ve_i\spar{-}}=0 \;\;   \textrm{if $i\in \{\alpha, \beta\}$}, \\
&&  \intf{v_{\beta F}\spar{+}, \ve_i\spar{+}}=0, \;\;  \intf{v_{\beta F}\spar{+}, \ve_i\spar{-}}=1\;\;   \textrm{if $i\in F$}, \\
&&  \intf{v_{\beta F}\spar{+}, \ve_i\spar{+}}=0, \;\;  \intf{v_{\beta F}\spar{+}, \ve_i\spar{-}}=0\;\;  \textrm{otherwise}.
\end{eqnarray*}
We then put
\[
v_{\beta F}\spar{-}:=\left(v_{\beta F}\spar{+} \right)^{\invol(\theh)}.
\]
The orbit $\orbb_7$ consists of walls $(v_{\beta F}\spar{+})\sperp\cap \LSDaa$ and $(v_{\beta F}\spar{-})\sperp\cap \LSDaa$.
The adjacent $\LS$-chamber $\LSD_{ \beta  F}\spar{\pm}$ across the wall $(v_{\beta F}\spar{\pm})\sperp\cap \LSDaa$ 
is $G$-equivalent  to $\LSD_1\spar{\beta}$.
We put $A:=\{\alpha, \beta\}$,
and consider the polarization $h_{AF}$ of degree $2$ 
defined by~\eqref{eq:hIJ} with $I=A$ and $J=F$.
The involution $\invol(h_{AF})$, 
which is an involution of type (b) in Section~\ref{subsec:auttype},
maps  $\LSD_1\spar{\beta}$ to  $\LSD_{\beta F}\spar{+}$   isomorphically,
whereas the involution $\invol(h_{FA})$
maps  $\LSD_1\spar{\beta}$ to  $\LSD_{\beta F}\spar{-}$  isomorphically.
(See Remark~\ref{rem:IJandJI}.)
Therefore we have 
\[
\invol(h_{AF})=\invol(h_{AF}) \inv \in \TG(\LSD_{\beta F}\spar{+}, u_0(\LSD_{\beta F}\spar{+})),
\quad
\invol(h_{FA})=\invol(h_{FA}) \inv \in \TG(\LSD_{\beta F}\spar{-}, u_0(\LSD_{\beta F}\spar{-})),
\]
in the notation of Section~\ref{subsec:VE}.
\subsection{The orbit $\orbb_6$}\label{subsec:orbb6}
 %
%
In the following, 
for a sign $\sigma\in \{+, -\}$, let $\barsigma$ denote the opposite sign:  $\{\sigma, \barsigma\}=\{+, -\}$.
First,  
we define automorphisms $g\sprime_{\sigma I j}$ and $g\spprime_{\sigma J}$.
\par
Let $\III$ be the set of ordered triples 
\[
I=(\{i_1\}, \{i_2, i_3, i_4\}, \{i_5, i_6\})
\]
such that  $\{i_1, \dots, i_6\}=\{1, \dots, 6\}$.
We have $|\III|=60$.
For a pair of $\sigma\in \{+, -\}$ and $I\in \III$,
we have the configuration of smooth rational curves as in Figure~\ref{fig:ellD4A3}.
Then 
\begin{eqnarray*}
f_{\phi} :=f_{\sigma I}& :=& \ve_{i_1}\spar{\barsigma} + \ve_{i_2}\spar{\barsigma}+ \ve_{i_3}\spar{\barsigma} +\ve_{i_4}\spar{\barsigma}+ 2\vgamma\spar{\barsigma}\\
&=& \vgamma\spar{\sigma}+ \ve_{i_5}\spar{\sigma}+ \ve_{i_6}\spar{\sigma}+\tilell_{i_5 i_6}
\end{eqnarray*}
is the class of a fiber of an elliptic fibration $\phi\colon X\to \P^1$
with 
\[
z_{\phi} :=z_{\sigma I}:= \ve_{i_1}\spar{\sigma}
\]
 being the class of a section.
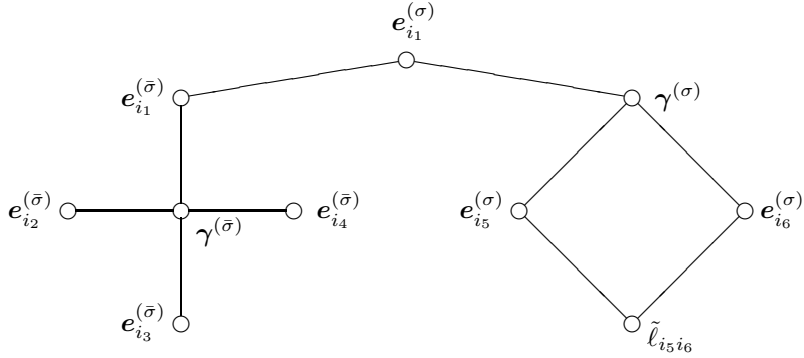
\begin{figure}
\newcommand{\node}{\circle{2}}
\setlength{\unitlength}{1mm}
{
\begin{picture}(80,56)(10, -5)
\put(5,20){\node}\put(-3,19){$\ve_{i_2}\spar{\barsigma}$}
\put(20,35){\node}\put(12,34){$\ve_{i_1}\spar{\barsigma}$}
\put(20,20){\node}\put(22,16){$\vgamma\spar{\barsigma}$}
\put(20,05){\node}\put(12,04){$\ve_{i_3}\spar{\barsigma}$}
\put(35,20){\node}\put(38,19){$\ve_{i_4}\spar{\barsigma}$}
\put(80,35){\node}\put(83,34){$\vgamma\spar{\sigma}$}
\put(65,20){\node}\put(57,19){$\ve_{i_5}\spar{\sigma}$}
\put(80,05){\node}\put(82,02){$\tilell_{i_5 i_6}$}
\put(95,20){\node}\put(97,19){$\ve_{i_6}\spar{\sigma}$}
\put(50,40){\node}\put(48,44){$\ve_{i_1}\spar{\sigma}$}
\put(20,6) {\line(0,1){13}}
\put(20,21) {\line(0,1){13}}
\put(19,20) {\line(-1,0){13}}
\put(21,20) {\line(1,0){13}}
\put(65.75,20.75) {\line(1,1){13.5}}
\put(65.75,19.25) {\line(1,-1){13.5}}
\put(94.25,20.75) {\line(-1,1){13.5}}
\put(94.25,19.25) {\line(-1,-1){13.5}}
\put(48.9,39.9) {\line(-6,-1){27.8}}
\put(51.1,39.9) {\line(6,-1){27.8}}
\end{picture} 
}
\caption{Configuration for a Jacobian fibration}\label{fig:ellD4A3}
\end{figure}
Thus we obtain 
a Jacobian fibration $\phi$ with the zero section $z_{\phi}$, and its Mordell-Weil group  
\[
\MW_{\phi}:=\MW(X, f_{\phi}, z_{\phi})\;\subset\; G=\Aut(X).
\]
Calculating the set $\Theta_{\phi}=\Roots(W_{\phi})\cap \Rats(X)$,
we see that 
the $\ADE$-type of the reducible fibers of $\phi\colon X\to \P^1$ is $D_4+A_3$.
Hence the rank of $\MW_{\phi}$ is $4$.
Since the trivial sublattice of $\phi$,
which is of rank $9$ generated by  the classes of the ten curves  in Figure~\ref{fig:ellD4A3},
is primitive in $S_X$, 
we see that $\MW_{\phi}$ is torsion free.
A Gram matrix of 
the Mordell-Weil lattice  $\MW_{\phi}$ (see Remark~\ref{rem:MWL}) is 
\[
\frac{3}{\;4\;}
\left[\begin{array}{cccc} 
3 & -1 & -1 & -1 \\ 
-1 & 3 & -1 & -1 \\ 
-1 & -1 & 3 & 1 \\ 
-1 & -1 & 1 & 3 
\end{array}\right].
\]
The numbers $n(s)$ of elements with small Mordell-Weil norms $s$ in $\MW_{\phi}$ are given as follows:
\[
\renewcommand{\arraystretch}{1.2}
\begin{array}{c|cccc}
s & 9/4 & 3 & 21/4 & 6 \\
\hline 
n(s) &12 & 14 & 16 & 30
\end{array}.
\]
Among these, we have the following sections of $\phi$:
\begin{itemize}
\item
The six smooth rational curves $\tilell_{j_1 j_2}$,
where $j_1\in \{i_2, i_3, i_4\}$ and $j_2\in \{i_5, i_6\}$,
satisfy $\intf{\tilell_{j_1 j_2}, f}=1$,
and hence they are sections of $\phi$.
Their Mordell-Weil norms are  $9/4$.
\item 
The three  smooth rational curves $\ve\spar{\sigma}_{j}$,
where $j\in \{i_2, i_3, i_4\}$,
also satisfy $\intf{\ve\spar{\sigma}_{j}, f}=1$,
and hence they are sections of $\phi$.
Their Mordell-Weil norms are equal to  $3$.
\end{itemize}
These $6+3$ sections $\tilell_{j_1 j_2}$ and $\ve\spar{\sigma}_{j}$  generate $\MW_{\phi}$.
\begin{definition}
For $j\in \{i_2, i_3, i_4\}$,
we denote by 
$g\sprime_{\sigma I j}$ the automorphism of $X$ obtained 
as the translation by the section $\ve\spar{\sigma}_{j}\in \MW_{\phi}$.
This is the automorphism of type (e) in Section~\ref{subsec:auttype}.
\end{definition}
Let $\JJJ$ be the set of ordered $4$-tuples 
\[
J=(\{i_1\}, \{i_2, i_3\}, \{i_4, i_5\}, \{i_6\})
\]
such that  $\{i_1, \dots, i_6\}=\{1, \dots, 6\}$.
We have $|\JJJ|=180$.
For a pair of $\sigma\in \{+, -\}$ and $J\in \JJJ$,
let $h_{\sigma J}$ be the vector of $S_X\tensor\Q$ defined by 
\begin{eqnarray}
&& \intf{h_{\sigma J}, \theh} = 14,  \nonumber\\
&&\intf{h_{\sigma J}, \ve_{i_1}\spar{\sigma}}=1\quand \intf{h_{\sigma J}, \ve_{i_1}\spar{\barsigma}}=0,\nonumber \\
&& \intf{h_{\sigma J}, \ve_{i}\spar{\sigma}}=4  \quand  \intf{h_{\sigma J}, \ve_{i}\spar{\barsigma}}=0  \qquad\textrm{for $i=i_2$ and $i= i_3$},\label{eq:hsigmaJ} \\
&& \intf{h_{\sigma J}, \ve_{i}\spar{\sigma}}=0  \quand \intf{h_{\sigma J}, \ve_{i}\spar{\barsigma}}=5  \qquad \textrm{for $i=i_4$ and $i= i_5$}, \nonumber  \\
&& \intf{h_{\sigma J}, \ve_{i_6}\spar{\sigma}}=5 \quand \intf{h_{\sigma J}, \ve_{i_6}\spar{\barsigma}}=4. \nonumber 
\end{eqnarray}
Then $h_{\sigma J} \in S_X$ and $\intf{h_{\sigma J}, h_{\sigma J}}=2$.
We confirm  $\Sep(h_{\sigma J}, \ampleX )=\emptyset$, and hence   $h_{\sigma J}\in N_X$.
The complete linear system $|h_{\sigma J}|$ is proved to be fixed-component free by the criterion in~Section~\ref{subsec:findingaut}.
\begin{definition}
We denote by 
$g\spprime_{\sigma J}$ the involution $\invol(h_{\sigma J})$.
This is the involution of type (d) in Section~\ref{subsec:auttype}.
\end{definition}
\begin{remark}
The  smooth rational curves on $X$ contracted by 
the double covering  $\pi(h_{\sigma J})\colon X\to \P^2$
associated with  $|h_{\sigma J }|$ are as in Figure~\ref{fig:sigmaJ}.
In particular, $\Sing(B(h_{\sigma J}))$ is of type $D_4+A_5$.
\begin{figure}
\newcommand{\node}{\circle{2}}
\setlength{\unitlength}{1mm}
{
\begin{picture}(90,40)(10, 5)
\put(5,15){\node}\put(3,10){$\ve_{i_2}\spar{\barsigma}$}
\put(20,15){\node}\put(18,10){$\vgamma\spar{\barsigma}$}
\put(20,30){\node}\put(23,29){$\ve_{i_1}\spar{\barsigma}$}
\put(35,15){\node}\put(33,10){$\ve_{i_3}\spar{\barsigma}$}
\put(60,15){\node}\put(58,10){$\tilell_{i_5 i_6}$}
\put(75,15){\node} \put(73,10){$\ve_{i_5}\spar{\sigma}$}
\put(60,30){\node}\put(58,33){$\tilell_{i_4 i_6}$}
\put(75,30){\node}\put(73,33){$\ve_{i_4}\spar{\sigma}$}
\put(90,22.5){\node}\put(89,18){$\vgamma\spar{\sigma}$}
\put(6,15) {\line(1,0){13}}
\put(21,15) {\line(1,0){13}}
\put(20,16) {\line(0,1){13}}
\put(61,15) {\line(1,0){13}}
\put(61,30) {\line(1,0){13}}
\put(75.9,29.6) {\line(2,-1){13.1}}
\put(75.9,15.4) {\line(2,1){13.1}}
\end{picture} 
}
\caption{Exceptional curves of $\pi(h_{\sigma J})$}\label{fig:sigmaJ}
\end{figure}
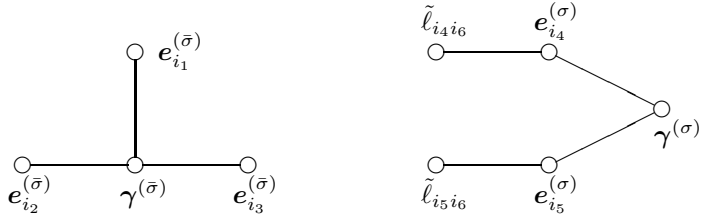
\end{remark}
We now describe the orbit $\orbb_6$ of walls of  $\LSDaa$.
The size of $\orbb_6$ is $30$.
Suppose that $\beta\in \{1, \dots, 6 \}$
 and $F=\{i_1, i_2\}\subset \{1, \dots, 6 \}$ satisfy $i_1\ne i_2$, 
 $\beta\ne \alpha$ and $\{\alpha, \beta\}\cap \{i_1, i_2\}=\emptyset$.
 Let  $u:=u_{\beta F}\in S_X\dual $ be the vector defined by    
\begin{eqnarray*}
&& \intf{u, \theh}=3, \\
&&  \intf{u, \ve_{\alpha}\spar{+}}=1, \;\;  \intf{u, \ve_{\alpha}\spar{-}}=1, \\
&&  \intf{u, \ve_{\beta}\spar{+}}=0, \;\;  \intf{u, \ve_{\beta}\spar{-}}=0,   \\ 
&&  \intf{u, \ve_i\spar{+}}=0, \;\;  \intf{u, \ve_i\spar{-}}=1\;\;   \textrm{if $i\in F$}, \\
&&   \intf{u, \ve_i\spar{+}}=1, \;\;  \intf{u, \ve_i\spar{-}}=0 \;\;  \textrm{if $i\notin \{\alpha, \beta\}\cup F$}.
\end{eqnarray*}
The orbit $\orbb_6$ consists of walls $(u_{\beta F})\sperp\cap \LSDaa$.
The  $\LS$-chamber $\LSD_{\alpha\beta F}$ adjacent to $\LSDaa$ across the wall $(u_{\beta F})\sperp\cap \LSDaa$ 
is $G$-equivalent  to $\LSD_1\spar{\beta}$.
An automorphism $g_{\alpha\beta F} \in G$ that maps $\LSD_1\spar{\beta}$ to $\LSD_{\alpha\beta F}$ isomorphically 
is given as follows.
We put
\[
K:=\{1, \dots, 6\}\; \setminus\;(\{\alpha, \beta\}\;\cup\; F).
\]
Then we have
\begin{equation}\label{eq:gabF}
g_{\alpha\beta F}=g\sprime_{+I\beta} \cdot g\spprime_{+J}=g\sprime_{-I\sprime\beta}\cdot g\spprime_{-J\sprime}, 
\end{equation}
where 
\begin{eqnarray*}
&& I=(\{\alpha\}, K\cup\{\beta\}, F)\in \III, \quad J=(\{\beta\}, K, F, \{\alpha\})\in \JJJ, \\
&& I\sprime=(\{\alpha\}, F\cup\{\beta\}, K)\in \III, \quad J\sprime=(\{\beta\}, F,K, \{\alpha\})\in \JJJ.
\end{eqnarray*}
Therefore we have 
\[
g_{\alpha\beta F} \inv \in \TG(\LSD_{\alpha\beta F}, u_0(\LSD_{\alpha\beta F}))
\]
in the notation of Section~\ref{subsec:VE}.
\begin{remark}\label{rem:trying}
The equality~\eqref{eq:gabF} was found 
by trying small combinations of the automorphisms of type (a)--(e).
\end{remark}
\subsection{Proof of Theorem~\ref{thm:genssimple}}
Any two distinct elements of $V_0$ are not $G$-equivalent.
Any $\LS$-chamber that is contained in $N_X$ and is adjacent to an element of $V_0$
is $G$-equivalent to an element of $V_0$.
Hence, by Proposition~\ref{prop:VE}, the set $V_0$ is a complete set of representatives of $V/G$.
\par
As the set $\HHH$ defined by~\eqref{eq:HHH},
we can take the set consisting of the identity element $1$, all involutions of type (b), (c), 
and the automorphisms  $g_{\alpha\beta F}\inv$, where $g_{\alpha\beta F}$ is given by~\eqref{eq:gabF}
and is a product of automorphisms of type (d) and (e).
The stabilizer subgroup $\Stab_G(D_0)$ of the initial element  $D_0\in V_0$ is $\{1, \invol(\theh)\}$.
Hence, by Proposition~\ref{prop:VE}, the group $G=\Aut(X)$ 
is generated by
 the automorphisms of type (a)-(e).
\qed
\begin{remark}
This generating set is very redundant.
\end{remark}
\subsection{Proof of Theorem~\ref{thm:rats}}
We prove that $G=\Aut(X)$ acts on $\Rats(X)$ transitively.
Let $r$ be an arbitrary element of $\Rats(X)$.
Since $r$ defines a wall of $N_X$,
there exists an $\LS$-chamber $D$ contained in $N_X$
such that $r$ defines a wall of $D$.
We have an automorphism $g\in G$
such that $D^g\in V_0$.
By the description of walls of the representative $\LS$-chambers in $V_0$,
we see that 
$r^g$ is one of the 
$12+2+15$ smooth rational curves $\ve_{\alpha}\spar{\pm}$, $\vgamma\spar{\pm}$,
and $\tilell_{ij}$.
The action of $\invol(\theh)$ gives  
$\ve_{\alpha}\spar{+}\leftrightarrow \ve_{\alpha}\spar{-} $ and $\vgamma\spar{+} \leftrightarrow \vgamma\spar{-}$.
By Remark~\ref{rem:PhiIJ}, the involution $\invol(h_{IJ})$ of type (b) interchanges 
$\ve_{j_1}\spar{+}$ and $\ve_{j_2}\spar{+}$
(see Figure~\ref{fig:IJ}).
By Remark~\ref{rem:typecexceptional}, 
the involution $\invol(h_{\alpha}^+)$ of type (c) 
interchanges $\vgamma\spar{-}$ and $\ve_{\alpha}\spar{-}$.
As was shown in Section~\ref{subsec:orbb6}, 
the elliptic fibration $X\to \P^1$  given by $f_{\sigma I}$ 
has sections $\ve_j\spar{\sigma}$ and $\tilell_{j_1 j_2}$,
and hence they belong to the same $G$-orbit.
Therefore these $12+2+15$ smooth rational curves are in the same $G$-orbit.
\qed
\bibliographystyle{plain}
\bibliography{myrefsAut}

\begin{thebibliography}{10}

\bibitem{Artal1994}
Enrique Artal-Bartolo.
\newblock Sur les couples de {Z}ariski.
\newblock {\em J. Algebraic Geom.}, 3(2):223--247, 1994.

\bibitem{ACT2008}
Enrique Artal~Bartolo, Jos\'{e}~Ignacio Cogolludo, and Hiro-o Tokunaga.
\newblock A survey on {Z}ariski pairs.
\newblock In {\em Algebraic geometry in {E}ast {A}sia---{H}anoi 2005},
  volume~50 of {\em Adv. Stud. Pure Math.}, pages 1--100. Math. Soc. Japan,
  Tokyo, 2008.

\bibitem{Artin1966}
Michael Artin.
\newblock On isolated rational singularities of surfaces.
\newblock {\em Amer. J. Math.}, 88:129--136, 1966.

\bibitem{BHPV2004}
Wolf~P. Barth, Klaus Hulek, Chris A.~M. Peters, and Antonius Van~de Ven.
\newblock {\em Compact complex surfaces}, volume~4 of {\em Ergebnisse der
  Mathematik und ihrer Grenzgebiete. 3. Folge. A Series of Modern Surveys in
  Mathematics [Results in Mathematics and Related Areas. 3rd Series. A Series
  of Modern Surveys in Mathematics]}.
\newblock Springer-Verlag, Berlin, second edition, 2004.

\bibitem{Borcherds1}
Richard Borcherds.
\newblock Automorphism groups of {L}orentzian lattices.
\newblock {\em J. Algebra}, 111(1):133--153, 1987.

\bibitem{Borcherds2}
Richard~E. Borcherds.
\newblock Coxeter groups, {L}orentzian lattices, and {$K3$} surfaces.
\newblock {\em Internat. Math. Res. Notices}, (19):1011--1031, 1998.

\bibitem{BrandhorstShimada2021}
Simon Brandhorst and Ichiro Shimada.
\newblock Automorphism groups of certain {E}nriques surfaces.
\newblock {\em Found. Comput. Math.}, Published online, 2021.
\newblock doi: 10.1007/s10208-021-09530-y.

\bibitem{Conway1983}
J.~H. Conway.
\newblock The automorphism group of the {$26$}-dimensional even unimodular
  {L}orentzian lattice.
\newblock {\em J. Algebra}, 80(1):159--163, 1983.

\bibitem{Degtyarev2008}
Alex Degtyarev.
\newblock On deformations of singular plane sextics.
\newblock {\em J. Algebraic Geom.}, 17(1):101--135, 2008.

\bibitem{Ebeling2013}
Wolfgang Ebeling.
\newblock {\em Lattices and codes}.
\newblock Advanced Lectures in Mathematics. Springer Spektrum, Wiesbaden, third
  edition, 2013.
\newblock A course partially based on lectures by Friedrich Hirzebruch.

\bibitem{GAP}
The~GAP Group.
\newblock {G}{A}{P} - {G}roups, {A}lgorithms, and {P}rogramming.
\newblock Version 4.11.0 of 29-Feb-2020 (http://www.gap-system.org).

\bibitem{Horikawa1975}
Eiji Horikawa.
\newblock On deformations of quintic surfaces.
\newblock {\em Invent. Math.}, 31(1):43--85, 1975.

\bibitem{KKS2014}
Toshiyuki Katsura, Shigeyuki Kondo, and Ichiro Shimada.
\newblock On the supersingular {$K3$} surface in characteristic 5 with {A}rtin
  invariant 1.
\newblock {\em Michigan Math. J.}, 63(4):803--844, 2014.

\bibitem{Kodaira1963}
K.~Kodaira.
\newblock On compact analytic surfaces. {II}, {III}.
\newblock {\em Ann. of Math. (2) 77 (1963), 563--626; ibid.}, pages 1--40,
  1963.
\newblock Reprinted in Collected Works, vol. III, pp. 1269--1372, Iwanami and
  Princton University Press 1975.

\bibitem{Kondo1998}
Shigeyuki Kondo.
\newblock The automorphism group of a generic {J}acobian {K}ummer surface.
\newblock {\em J. Algebraic Geom.}, 7(3):589--609, 1998.

\bibitem{KondoShimada2014b}
Shigeyuki Kondo and Ichiro Shimada.
\newblock On a certain duality of {N}\'{e}ron-{S}everi lattices of
  supersingular {$K3$} surfaces.
\newblock {\em Algebr. Geom.}, 1(3):311--333, 2014.

\bibitem{LieblichMaulik2018}
Max Lieblich and Davesh Maulik.
\newblock A note on the cone conjecture for {K}3 surfaces in positive
  characteristic.
\newblock {\em Math. Res. Lett.}, 25(6):1879--1891, 2018.

\bibitem{Looijenga2014}
Eduard Looijenga.
\newblock Discrete automorphism groups of convex cones of finite type.
\newblock {\em Compos. Math.}, 150(11):1939--1962, 2014.

\bibitem{Neron1964}
Andr\'{e} N\'{e}ron.
\newblock Mod\`eles minimaux des vari\'{e}t\'{e}s ab\'{e}liennes sur les corps
  locaux et globaux.
\newblock {\em Inst. Hautes \'{E}tudes Sci. Publ. Math.}, (21):128, 1964.

\bibitem{Nikulin1979}
V.~V. Nikulin.
\newblock Integer symmetric bilinear forms and some of their geometric
  applications.
\newblock {\em Izv. Akad. Nauk SSSR Ser. Mat.}, 43(1):111--177, 238, 1979.
\newblock English translation: Math USSR-Izv. 14 (1979), no. 1, 103--167
  (1980).

\bibitem{Nikulin1990}
Viacheslav~V. Nikulin.
\newblock Weil linear systems on singular {$K3$} surfaces.
\newblock In {\em Algebraic geometry and analytic geometry ({T}okyo, 1990)},
  ICM-90 Satell. Conf. Proc., pages 138--164. Springer, Tokyo, 1991.

\bibitem{OkaPho2002}
Mutsuo Oka and Duc~Tai Pho.
\newblock Classification of sextics of torus type.
\newblock {\em Tokyo J. Math.}, 25(2):399--433, 2002.

\bibitem{Torelli1971}
I.~I. Pjatecki{\u\i}-{\v{S}}apiro and I.~R. {\v{S}}afarevi{\v{c}}.
\newblock Torelli's theorem for algebraic surfaces of type {${\rm K}3$}.
\newblock {\em Izv. Akad. Nauk SSSR Ser. Mat.}, 35:530--572, 1971.
\newblock Reprinted in I. R. Shafarevich, Collected Mathematical Papers,
  Springer-Verlag, Berlin, 1989, pp. 516–557.

\bibitem{SaintDonat1974}
B.~Saint-Donat.
\newblock Projective models of {$K-3$} surfaces.
\newblock {\em Amer. J. Math.}, 96:602--639, 1974.

\bibitem{Schuett2016}
Matthias Sch\"{u}tt.
\newblock Dynamics on supersingular {K}3 surfaces.
\newblock {\em Comment. Math. Helv.}, 91(4):705--719, 2016.

\bibitem{MWLbook}
Matthias Sch\"{u}tt and Tetsuji Shioda.
\newblock {\em Mordell-{W}eil lattices}, volume~70 of {\em Ergebnisse der
  Mathematik und ihrer Grenzgebiete. 3. Folge. A Series of Modern Surveys in
  Mathematics [Results in Mathematics and Related Areas. 3rd Series. A Series
  of Modern Surveys in Mathematics]}.
\newblock Springer, Singapore, 2019.

\bibitem{Shimada2010}
Ichiro Shimada.
\newblock Lattice {Z}ariski {$k$}-ples of plane sextic curves and
  {$Z$}-splitting curves for double plane sextics.
\newblock {\em Michigan Math. J.}, 59(3):621--665, 2010.

\bibitem{Shimada2014}
Ichiro Shimada.
\newblock Projective models of the supersingular {$K3$} surface with {A}rtin
  invariant 1 in characteristic 5.
\newblock {\em J. Algebra}, 403:273--299, 2014.

\bibitem{Shimada2015}
Ichiro Shimada.
\newblock An algorithm to compute automorphism groups of {$K3$} surfaces and an
  application to singular {$K3$} surfaces.
\newblock {\em Int. Math. Res. Not. IMRN}, (22):11961--12014, 2015.

\bibitem{Shimada2016}
Ichiro Shimada.
\newblock Automorphisms of supersingular {$K3$} surfaces and {S}alem
  polynomials.
\newblock {\em Exp. Math.}, 25(4):389--398, 2016.

\bibitem{compdata}
Ichiro Shimada.
\newblock Explanation of the computation data for the paper ``{M}ordell-{W}eil
  groups and automorphism groups of elliptic {$K3$} surfaces", 2022.\\
\newblock
  \url{https://www.math.sci.hiroshima-u.ac.jp/~shimada/K3andEnriques.html}.

\bibitem{ShiodaMWL}
Tetsuji Shioda.
\newblock On the {M}ordell-{W}eil lattices.
\newblock {\em Comment. Math. Univ. St. Paul.}, 39(2):211--240, 1990.

\bibitem{Sterk1985}
Hans Sterk.
\newblock Finiteness results for algebraic {$K3$} surfaces.
\newblock {\em Math. Z.}, 189(4):507--513, 1985.

\bibitem{Tate1975}
J.~Tate.
\newblock Algorithm for determining the type of a singular fiber in an elliptic
  pencil.
\newblock In {\em Modular functions of one variable, {IV} ({P}roc. {I}nternat.
  {S}ummer {S}chool, {U}niv. {A}ntwerp, {A}ntwerp, 1972)}, volume Vol. 476 of
  {\em Lecture Notes in Math}, pages pp 33--52. Springer, Berlin, 1975.

\bibitem{VinbergBombay}
\`E.~B. Vinberg.
\newblock Some arithmetical discrete groups in {L}oba\v{c}evski\u{\i} spaces.
\newblock In {\em Discrete subgroups of {L}ie groups and applications to moduli
  ({I}nternat. {C}olloq., {B}ombay, 1973)}, pages 323--348. 1975.

\bibitem{VinbergSurvey}
\`E.~B. Vinberg.
\newblock Hyperbolic groups of reflections.
\newblock {\em Uspekhi Mat. Nauk}, 40(1(241)):29--66, 255, 1985.
\newblock English translation: Russian Math. Surveys 40 (1985), no. 1, 31–75.

\bibitem{Zariski1931}
Oscar Zariski.
\newblock On the irregularity of cyclic multiple planes.
\newblock {\em Ann. of Math. (2)}, 32(3):485--511, 1931.

\end{thebibliography}
%

\end{document}